%% file: main.tex
\pdfoutput=1
\documentclass[letterpaper,11pt]{article}
\usepackage[margin=1in]{geometry}

\usepackage[T1]{fontenc}
\usepackage{graphicx}
\usepackage{amsmath,amssymb,amsfonts}
\usepackage{amsthm}
\usepackage{mathrsfs}
\usepackage[title]{appendix}
\usepackage{xcolor}
\usepackage{booktabs}
\usepackage{enumitem}
\usepackage{microtype}
\usepackage{mathtools}
\usepackage{tikz}

\usepackage{hyperref}
\usepackage[capitalize,nameinlink,noabbrev]{cleveref}

\newcommand{\blue}[1]{{\color{blue}#1}}
\newcommand{\linkToPdf}[1]{\href{#1}{\blue{(pdf)}}}

\hypersetup{colorlinks, hypertexnames=false, pageanchor=true,
            linkcolor=blue, citecolor={green!50!black}, urlcolor=cyan,
            pdftitle={Benign Degeneracy in Exact Boolean Moment-SOS
                      Relaxations},
            pdfauthor={Shucheng Kang}
            }
\mathtoolsset{centercolon}
\numberwithin{equation}{section}

\makeatletter
\AddToHook{cmd/appendix/before}{%
  \def\cref@section@alias{appendix}%
  \def\cref@subsection@alias{appsubsec}}
\makeatother

\input{macros.tex}

\title{Benign Degeneracy for First-Order Methods \\ in Exact Boolean Moment--SOS Relaxations}

\author{
Shucheng Kang\\[3pt]
\normalsize School of Engineering and Applied Sciences, Harvard University\\
\normalsize \texttt{skang1@g.harvard.edu}
}
\date{}

\begin{document}

\maketitle
\thispagestyle{empty}

\begin{abstract}\normalsize
\input{sections/abstract.tex}
\end{abstract}

\newpage
\thispagestyle{empty}
\setcounter{tocdepth}{1} 
\tableofcontents

\newpage
\setcounter{page}{1}

\input{sections/introduction.tex}
\input{sections/degeneracy.tex}
\input{sections/rigidity.tex}
\input{sections/boolean.tex}

\input{sections/algorithms.tex}
\input{sections/genericity.tex}

\input{sections/discussion.tex}

\clearpage
\appendix
\input{sections/app_related.tex}
\input{sections/app_formulation.tex}
\input{sections/app_spectral.tex}
\input{sections/app_rigidity_proofs.tex}
\input{sections/app_modulus.tex}
\input{sections/app_algorithms.tex}
\input{sections/app_genericity.tex}
\input{sections/app_experiments.tex}

\clearpage

\section*{Acknowledgments}
The author thanks Heng Yang for helpful discussions. This project is
partly funded by the NSF CAREER Award 2543352.

\input{sections/ai_disclosure.tex}

\bibliographystyle{alphaurl}
\bibliography{../../references/refs,../../references/myRefs}

\end{document}

%% file: macros.tex
\newcommand{\R}{\mathbb R}
\newcommand{\Sym}{\mathbb S}          
\newcommand{\cA}{\mathcal A}
\newcommand{\cR}{\mathcal R}
\newcommand{\cN}{\mathcal N}
\newcommand{\cH}{\mathcal H}
\newcommand{\cT}{\mathcal T}
\newcommand{\cI}{\mathcal I}
\newcommand{\cP}{\mathcal P}
\newcommand{\cQ}{\mathcal Q}
\newcommand{\cC}{\mathcal C}
\newcommand{\cG}{\mathcal G}          
\newcommand{\cK}{\mathcal K}          
\newcommand{\cX}{\mathcal X}          
\newcommand{\Ggen}{\mathcal G_{\mathrm{gen}}} 

\newcommand{\Sopt}{\mathcal S_\star}  
\newcommand{\Zopt}{\mathcal Z_\star}  
\newcommand{\Wopt}{\mathcal W_\star}  
\newcommand{\Ssc}{S_{\mathrm{sc}}}    
\newcommand{\Xsc}{X_{\mathrm{sc}}}    
\newcommand{\Xopt}{\mathcal X_\star}  
\newcommand{\cM}{\mathcal M}          
\newcommand{\cL}{\mathcal L}          
\newcommand{\cD}{\mathcal D}          
\newcommand{\cE}{\mathcal E}          
\newcommand{\Tan}{T}                  
\DeclareMathOperator{\rb}{rb}

\newcommand{\Prow}{\mathsf P}         
\newcommand{\Pnull}{\mathsf P^\perp}  
\newcommand{\Jext}{\mathscr J}        
\newcommand{\Id}{\mathrm{Id}}

\DeclareMathOperator{\tr}{tr}
\DeclareMathOperator{\rank}{rank}
\DeclareMathOperator{\ran}{ran}
\DeclareMathOperator{\spanof}{span}
\DeclareMathOperator{\aff}{aff}
\DeclareMathOperator{\ri}{ri}
\DeclareMathOperator{\cl}{cl}
\DeclareMathOperator{\dist}{dist}
\DeclareMathOperator{\Fix}{Fix}
\DeclareMathOperator{\diag}{diag}
\DeclareMathOperator{\OPT}{OPT}
\DeclareMathOperator*{\argmin}{argmin}

\newcommand{\ip}[2]{\langle #1,#2\rangle}
\newcommand{\normF}[1]{\lVert #1\rVert_F}
\newcommand{\normtwo}[1]{\lVert #1\rVert_2}
\newcommand{\normop}[1]{\lVert #1\rVert_{\mathrm{op}}}
\newcommand{\normc}[1]{\lVert #1\rVert_{\mathrm{coef}}}

\newcommand{\eps}{\varepsilon}

\newcommand{\rpar}{r^{\parallel}}
\newcommand{\rper}{r^{\perp}}

\theoremstyle{plain}
\newtheorem{theorem}{Theorem}[section]
\newtheorem{proposition}[theorem]{Proposition}
\newtheorem{lemma}[theorem]{Lemma}
\newtheorem{corollary}[theorem]{Corollary}
\newtheorem{assumption}[theorem]{Assumption}
\theoremstyle{definition}

\theoremstyle{remark}
\newtheorem{remark}[theorem]{Remark}
\newtheorem{example}[theorem]{Example}

\crefname{assumption}{Assumption}{Assumptions}
\Crefname{assumption}{Assumption}{Assumptions}

\crefformat{section}{\S#2#1#3}
\crefformat{subsection}{\S#2#1#3}
\crefformat{subsubsection}{\S#2#1#3}
\Crefformat{section}{\S#2#1#3}
\Crefformat{subsection}{\S#2#1#3}
\Crefformat{subsubsection}{\S#2#1#3}
\crefmultiformat{section}{\S#2#1#3}{ and \S#2#1#3}{, \S#2#1#3}{, and \S#2#1#3}
\crefrangeformat{section}{\S#3#1#4--\S#5#2#6}
\Crefmultiformat{section}{\S#2#1#3}{ and \S#2#1#3}{, \S#2#1#3}{, and \S#2#1#3}
\Crefrangeformat{section}{\S#3#1#4--\S#5#2#6}
\crefmultiformat{subsection}{\S#2#1#3}{ and \S#2#1#3}{, \S#2#1#3}{, and \S#2#1#3}
\crefrangeformat{subsection}{\S#3#1#4--\S#5#2#6}
\Crefmultiformat{subsection}{\S#2#1#3}{ and \S#2#1#3}{, \S#2#1#3}{, and \S#2#1#3}
\Crefrangeformat{subsection}{\S#3#1#4--\S#5#2#6}
\crefname{appendix}{Appendix}{Appendices}
\Crefname{appendix}{Appendix}{Appendices}
\crefname{appsubsec}{Appendix}{Appendices}
\Crefname{appsubsec}{Appendix}{Appendices}

%% file: sections/abstract.tex
Pseudo-Boolean optimization---minimizing a polynomial over
$\{\pm1\}^n$---includes MAX-CUT, binary quadratic optimization, and Ising
models. The moment--sum-of-squares (Moment--SOS) hierarchy provides
certified semidefinite relaxations that are often empirically exact at low
order. We ask: once a relaxation is exact, how difficult is the resulting
semidefinite program (SDP) to solve with first-order methods?

Exactness does not imply regularity. At every order $d\ge2$, a unique
rank-one moment optimum is primal degenerate; moreover, if one optimal SOS
certificate is strictly complementary, then the certificate set is
positive-dimensional and contains singular boundary points. Thus first-order
methods may converge to such singular points, around which pointwise
analyses that assume regularity at the limit cannot establish a local
linear convergence rate.

We show that, for exact Boolean Moment--SOS relaxations in their canonical
quotient form, this degeneracy is nevertheless benign. We develop an
abstract mechanism, which we call \emph{residual-field rigidity}, that
propagates regularity from the relative interior of the optimal set to its
singular boundary. It requires neither the primal nor the dual optimal
solution to be unique, and it is equivalent to the first-order stall
directions of the residual coinciding with the tangent directions of the
optimal set. For Boolean
Moment--SOS, unique rank-one exactness and the existence of one strictly
complementary certificate suffice to trigger this mechanism, yielding a
uniform error bound around the entire optimal set. Consequently,
ADMM, sGS--ADMM, and PDHG converge eventually linearly
from every initialization, even when their limiting certificate is
singular. Finally, outside a
lower-dimensional semialgebraic exceptional set of Boolean objectives, these
hypotheses hold simultaneously at every uniquely rank-one exact order.

%% file: sections/introduction.tex
\section{Introduction}
\label{sec:intro}

\paragraph{Problem statement.}
Consider the pseudo-Boolean optimization (PBO): given a real
polynomial $f$ in $n$ variables, compute
\begin{equation}\label{eq:pbo}
  \OPT(f):=\min_{x\in\{\pm1\}^n} f(x).
\end{equation}
Since $x_i^2=1$ on the cube, we always represent $f$ by its unique
square-free (multilinear) polynomial. Problem~\eqref{eq:pbo} is a broad
modeling language for discrete optimization~\cite{boros02dam-pbo}. Already
at degree two it contains MAX-CUT, unconstrained binary quadratic
optimization (QUBO)~\cite{kochenberger14joco-ubqp}, and ground states of
Ising spin glasses, which are NP-hard to
compute~\cite{barahona82jpa-ising}; higher degrees cover Boolean
constraint satisfaction. Semidefinite programming (SDP) relaxations
have been central to
this problem class since the Goemans--Williamson approximation algorithm
for MAX-CUT~\cite{goemans95jacm-maxcut}.

\paragraph{The Moment--SOS hierarchy.}
The canonical family of semidefinite relaxations for \eqref{eq:pbo} is the
Moment--SOS hierarchy~\cite{lasserre01siopt-global,parrilo03mp-semidefinite,laurent03mor-comparison}.
At order $d$, a true probability distribution on the cube is replaced by a
degree-$2d$ pseudo-moment sequence, represented by a positive-semidefinite
(PSD) moment matrix; the dual PSD variable is a sum-of-squares (SOS) Gram
certificate. In the language of theoretical computer science, this is the degree-$2d$ pseudoexpectation / SOS proof-system
view~\cite{barak14icm-sosproof,moitra20chapter-sostcs}. The order-$d$ value
$p_d(f)$ is defined at every \emph{admissible} order, $2d\ge\deg f$, and
satisfies
\begin{equation}\label{eq:hierarchy}
  p_d(f)\le \OPT(f),
  \qquad p_{d_{\min}}(f)\le p_{d_{\min}+1}(f)\le\cdots,
  \qquad d_{\min}(f):=\max\{1,\lceil\deg f/2\rceil\},
\end{equation}
and the hierarchy is exact at order $n$ on the finite
cube~\cite{laurent03mor-comparison} (see also \cref{prop:eventual}). We call the
relaxation \emph{exact} at order $d$ if $p_d(f)=\OPT(f)$, and \emph{uniquely
rank-one exact} if in addition its moment SDP has the unique optimizer
$b_d(x_\star)b_d(x_\star)^\top$, where $x_\star$ is the Boolean minimizer and
$b_d$ is the monomial vector, as elaborated in \cref{sec:degeneracy}.

\paragraph{Worst-case looseness versus empirical low-order exactness.}
How large must $d$ be? Worst-case theory is discouraging: the cut polytope
of the complete graph needs relaxation order
$\lceil n/2\rceil$~\cite{laurent03mor-cutpolytope}, knapsack-type
instances carry $\Omega(n)$ SOS-degree lower
bounds~\cite{grigoriev01cc-knapsack}, and random $k$-CSPs keep large gaps
even at order $\Omega(n)$~\cite{schoenebeck08focs-lasserre}. Computational
practice tells a different story. On unconstrained binary
quadratic optimization, order two often delivers the exact solution or a
very good approximation~\cite{habibi25jgo-lasserre-ubqp}. BQP relaxations
are a standard testbed for scalable low-rank
solvers~\cite{yang23mp-stride,hou26or-rnnal,hou25arxiv-rinnalplus,wang25arxiv-dual-riemannian-admm-lowrank-sdp}.
The second-order Boolean relaxation, reported empirically exact on random
QUBO, has been solved at $n=120$ variables, a moment matrix of size
$7{,}261$ with about $17.9$ million equality
constraints~\cite{wang25jsc-manisdp}, and certifiably exact low-order
relaxations are routine in robotics and
perception~\cite{rosen19ijrr-sesync,briales18cvpr-certifiably,yang22pami-certifiably-plain,kang24wafr-strom}.
Most hierarchy theory asks how large $d$ must be before exactness. The
empirical picture motivates a complementary question:
\begin{center}
\emph{Once a low-order Moment--SOS relaxation is exact,\\
how hard is the resulting semidefinite program to solve?}
\end{center}

\paragraph{Why first-order methods.}
For SDPs of small and medium size, interior-point methods are the solver
of choice. They need few iterations~\cite{deklerk02book-ipm} and reach
high accuracy. Under strict complementarity they even converge locally
superlinearly~\cite{alizadeh98siopt-aho,luo98anp-superlinear-convergence-path-following-sdp}.
But each conventional interior-point iteration forms and factors a dense
matrix whose dimension is the number of equality constraints. First-order
splitting methods target the large-scale regime where this matrix can no
longer even be formed~\cite{wen10mpc-admmsdp,odonoghue16jota-scs}. Boolean
moment relaxations sit squarely in that regime, with a moderate PSD block
(the moment matrix of size $7{,}261$ above) and equality constraints
quadratic in its size (the $17.9$ million above). A
splitting iteration needs only one eigendecomposition of the moment
matrix, plus an affine projection through a factorization computed once;
its local convergence rate is therefore the relevant measure of hardness.

\paragraph{Exact but degenerate.}
Can a splitting
method---ADMM~\cite{gabay76cma-dual-algorithm-nonlinear-variation}, its
symmetric Gauss--Seidel variant sGS--ADMM~\cite{chen17mp-sgsadmm}, or
PDHG~\cite{chambolle11jmiv-pdhg}---converge locally linearly on
the exact relaxation? These methods are sublinear in the worst
case~\cite{davis17bookchapter-convergence,zamani24mp-exact}, and locally
linear under regularity conditions at the limit point they
select~\cite{han18mor-linear,kang25arxiv-admmsdp-linearconv,jiang26arxiv-linearconv-pdhg-sdp,ding26arxiv-linearconv-fom-cp,li26arxiv-gpu-conic-qp-linearconv-sc}.
Unfortunately, exact Boolean relaxations sit in a known harder class:
\emph{degenerate} SDPs. Suppose $f$ has a unique Boolean minimizer $x_\star$
and order $d$ is uniquely rank-one exact, so the moment optimum is the
single rank-one matrix $X_\star=b_d(x_\star)b_d(x_\star)^\top$. Then,
this optimum is primal degenerate whenever $d\ge2$, in the standard SDP
sense~\cite{alizadeh9mp-complementarity-nondegeneracy}
(\cref{sec:degeneracy}). Moreover, once one SOS certificate is strictly
complementary---the generic case (\cref{sec:genericity})---the optimal Gram
matrices form a positive-dimensional compact convex set $\Sopt$ whose
relative boundary consists of certificates that lose rank. A first-order
method is free to converge to such a singular boundary certificate, where
the pointwise regularity toolbox breaks down and no local linear rate can be
concluded.

\begin{figure}[t]
\centering
\begin{minipage}[t]{0.32\textwidth}\centering
 \includegraphics[width=\linewidth]{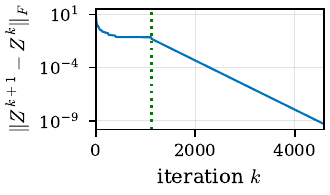}\\[1pt]
 {\small (a) cold start, non-SC limit}
\end{minipage}\hfill
\begin{minipage}[t]{0.32\textwidth}\centering
 \includegraphics[width=\linewidth]{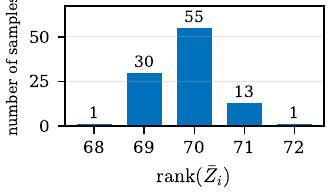}\\[1pt]
 {\small (b) ranks of the endpoints $\bar{Z}_i$}
\end{minipage}\hfill
\begin{minipage}[t]{0.32\textwidth}\centering
 \includegraphics[width=\linewidth]{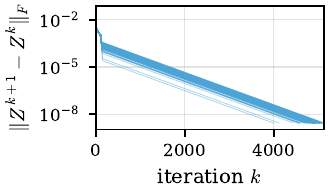}\\[1pt]
 {\small (c) $100$ near-singular warm starts}
\end{minipage}
\caption{Benign degeneracy on second-order moment relaxations of two BQP
instances with $n=20$. (a) Exact ADMM ($\sigma=100$, all-zeros start) on
the standard relaxation ($N=231$) of the instance
of~\cite{kang25arxiv-admmsdp-linearconv}: the limiting pair is \emph{not}
strictly complementary, yet the residual enters a sharp linear phase once
the primal rank is identified (dotted line). (b)--(c) One hundred runs on the quotient
relaxation of \cref{sec:degeneracy} of a second instance ($N=211$), each
started at a small random perturbation of a sampled singular optimal
endpoint $\bar{Z}_i$ (\cref{app:experiments}): the endpoints are singular
and mutually distinct (b), and every run converges locally linearly (c).}
\label{fig:motivation}
\end{figure}

\paragraph{Two unusual observations.}
Practice, however, does not match this pessimism, and
\cref{fig:motivation} shows the phenomenon that motivates this paper. On
the second-order moment relaxation of a BQP instance with $n=20$, exact
ADMM started cold converges to a KKT point whose certificate is \emph{not}
strictly complementary, yet the fixed-point residual still enters a sharp
linear phase (\cref{fig:motivation}(a)). This behavior was recorded as an
open question in~\cite{kang25arxiv-admmsdp-linearconv}. Nor is it an
accident of one trajectory. On a second quotient instance, one hundred
runs initialized near sampled \emph{singular} optimal fixed points
(\cref{fig:motivation}(b)) all converge locally linearly
(\cref{fig:motivation}(c)). A dual Riemannian method reports similar behavior on BQP
relaxations~\cite{wang25arxiv-dual-riemannian-admm-lowrank-sdp}, and the
effect persists for higher-degree objectives (\cref{app:experiments}). For general degenerate SDPs, by contrast, the
pessimism is justified by explicit examples that lose local linear
convergence
altogether~\cite{jiang26arxiv-linearconv-pdhg-sdp,cui16arxiv-superlinear-alm-sdp,kang26arxiv-admmsdp-limitdyn,kang26arxiv-afterimage}.
Exact Boolean relaxations thus appear to carry a hidden benign structure,
which this paper identifies and proves.

\paragraph{Main message: the degeneracy is benign.}
Our headline result explains these observations: for exact Boolean
Moment--SOS relaxations in the quotient form of
\cref{fig:motivation}(b)--(c), constructed in \cref{sec:degeneracy}, one
strictly complementary certificate---anywhere in $\Sopt$---is enough for
the fast regime. The reason is a uniform \emph{setwise} error bound for
the canonical Douglas--Rachford (DR) residual around the entire
fixed-point set. It is produced by a mechanism we call \emph{residual-field
rigidity}: a positive first-order modulus of the residual field over the
relative interior of the optimal set cannot vanish at its singular
boundary. Equivalently, the directions along which the residual stalls to
first order are exactly the directions along the optimal set, so the
second-order slow regions of~\cite{kang26arxiv-admmsdp-limitdyn} cannot
form near it. Existing local linear-rate theory for splitting methods on
SDPs and related conic programs is local to a regular KKT point. It
assumes strict
complementarity, nondegeneracy, or an error bound at the limit point the
algorithm
selects~\cite{han18mor-linear,kang25arxiv-admmsdp-linearconv,jiang26arxiv-linearconv-pdhg-sdp,ding26arxiv-linearconv-fom-cp,li26arxiv-gpu-conic-qp-linearconv-sc}.
Augmented Lagrangian methods are different, since one strictly
complementary pair already yields the objective-level error bound they
need~\cite{liao24neurips-inexact-alm,cui16arxiv-superlinear-alm-sdp}; the
splitting residual does not inherit that bound (\cref{rem:tied-objectives}).
Strict complementarity and nondegeneracy fail at the singular endpoints
of a positive-dimensional certificate set. Rigidity replaces them by a condition
that is checked at interior endpoints only; the regularity is
\emph{imported} from there to the whole set.
For the Boolean question above, unique rank-one exactness plus one
strictly complementary certificate put the quotient relaxation in the fast
regime of first-order methods; what remains open is the entrance time,
which separates this rate statement from a complexity statement
(\cref{sec:discussion}). (Panel (a) uses the standard encoding; see
\cref{sec:discussion}.)

\begin{theorem}[Benign degeneracy, informal]\label{thm:informal-A}
Fix an order $d$ at which the quotient Boolean moment SDP of
\cref{sec:degeneracy} has a unique rank-one optimum and at least one
strictly complementary optimal SOS certificate.
Then, for every admissible step choice, the canonical DR residual
satisfies a uniform local error bound around its entire fixed-point set.
Consequently, ADMM, sGS--ADMM, and PDHG converge
eventually linearly from every initialization, and the limiting KKT point
they select need not be strictly complementary. Moreover, outside a
lower-dimensional semialgebraic set in the $2^n$-dimensional
space of Boolean objectives, both hypotheses hold simultaneously at every
uniquely rank-one exact order. The formal statements are
\cref{thm:boolean-fixed,thm:main} and \cref{cor:admm,cor:pdhg}.
\end{theorem}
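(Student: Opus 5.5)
This informal theorem packages \cref{thm:boolean-fixed,thm:main,cor:admm,cor:pdhg}, so the plan is to give the chain of reductions those results will make precise. The first step is to set up the canonical DR operator $T$ for the quotient moment SDP with step $\sigma>0$. Its fixed-point set $\Fix T$ will be identified, via the Moreau decomposition, with the KKT set $\{X_\star\}\times\Sopt$. This uses unique rank-one exactness: the primal optimum is the single matrix $X_\star=b_d(x_\star)b_d(x_\star)^\top$, so the fixed points are parametrized by $\Sopt$ alone. One strictly complementary certificate $\Ssc$ gives $\rank X_\star+\rank\Ssc=N$. Every certificate in $\ri\Sopt$ then shares the maximal rank $N-1$ and is strictly complementary. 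On the boundary $\rb\Sopt$ the rank drops, which gives the singular endpoints.

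The second step is to verify residual-field rigidity. At an interior fixed point $z$, strict complementarity makes the PSD projection differentiable there. The derivative of $r=z-Tz$ is then explicit, and its kernel should equal the tangent space of $\Fix T$. This tangent space is the image of $\aff\Sopt$ under the fixed-point parametrization. Checking that equality requires the Boolean structure. The quotient form removes the redundant constraints coming from $x_i^2=1$, so the linear map restricted to the face $\{Z: X_\star Z=0\}$ is injective modulo $\aff\Sopt$. I expect the right tool here to be the earlier description of the affine hull of $\Sopt$, namely Gram matrices of the same SOS polynomial $f-\OPT(f)$ whose range lies in the orthogonal complement of $b_d(x_\star)$.

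The third step, and the one I expect to be the main obstacle, is to export this modulus from $\ri\Sopt$ to $\rb\Sopt$. At boundary points $T$ is only semismooth. The plan is to apply the abstract rigidity theorem with the following argument by contradiction. Suppose there were a sequence $z_k\to\bar z\in\rb$ with $\|r(z_k)\|=o(\dist(z_k,\Fix T))$. Project each $z_k$ onto $\Fix T$ and use the fact that the residual field is piecewise analytic on the compact set $\Sopt$. The stalling directions must then be limits of stalling directions at nearby interior points. By the tangent/kernel coincidence, those directions are tangent to $\Fix T$, which contradicts their being normal to it. Compactness of $\Sopt$ then turns this into a uniform bound $\|r(z)\|\ge\kappa\,\dist(z,\Fix T)$ on a neighborhood of $\Fix T$.

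The last step is to derive the algorithmic consequences and the genericity statement. For ADMM, global convergence to some fixed point holds by standard theory, and the setwise error bound together with Fejér monotonicity yields eventual linear convergence for any limit, including singular ones. sGS--ADMM and PDHG fit as DR-type operators in a changed metric, where the same error bound transfers through metric equivalence. For genericity, the failure of unique rank-one exactness or of strict complementarity at each order $d\le n$ is a semialgebraic condition on $f\in\R^{2^n}$. Its complement within the exact region will be shown to have lower dimension by a dimension count on the algebraic equations that encode a rank-deficient certificate. Taking the finite union over orders $d$ gives the exceptional set.
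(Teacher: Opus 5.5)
\textbf{The gap is in Steps 2--3.} The equality you plan to verify in Step 2 is that at every interior endpoint the kernel of the linear map $\delta_\sigma'(\bar Z;\cdot)$ is the direction space $\cD=\cL$. This uses no Boolean structure. It holds at every relative-interior endpoint of any SDP satisfying \cref{ass:two-sided} (\cref{lem:automatic-interior}). In particular it holds in the tied-minimizer instance of \cref{rem:tied-objectives}. There, nevertheless, $\mu_{\mathrm{ri}}=0$ and the error bound \eqref{eq:canonical-eb} fails at a boundary endpoint (\cref{app:tied-objectives}). Your Step 3 uses only this pointwise identity, compactness, and piecewise analyticity, all of which are available in that instance, so it would prove a false statement.

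Concretely, the limit step fails in two places. First, the nearest points $\bar z_k$ may themselves lie on $\rb\Zopt$. There the normal cone $\cD^\perp-\cK(\bar X)+\cK(\bar S)$ is strictly larger than $\cD^\perp$, and a limiting unit normal with a kernel-cone component is not a limit of interior normals at all. Second, even along interior bases, $\Pi_+'$ jumps where a kernel grows, so injectivity on $\cD^\perp$ at each interior point gives no uniform modulus. In the tied instance, a fixed unit direction in $\cD^\perp$ has modulus $\eps/(\eps+\sigma\gamma)\to0$ along interior endpoints, and it becomes a stall direction that is not tangent at the limit. ``Piecewise analytic plus compact'' cannot repair this: \cref{ex:not-abstract} is a firmly nonexpansive map with interior modulus one and a flat boundary normal. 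If you intend to invoke \cref{thm:rigidity} as a black box, its hypothesis is the uniform modulus (i) or the boundary identities (ii), and your Step 2 establishes neither.

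Two ingredients are missing. The first is a Boolean input that stays uniform up to the boundary: the constant-diagonal identity $\cT_d\cap\Sym(V)^\perp=\{0\}$ of \cref{lem:boolean-geometry}. It holds because a zero-diagonal matrix $qv^\top+vq^\top$ with all $q_A\ne0$ must vanish. The paper combines it with the XOR-class identity to show that the continuous extension $\Psi_S$ of the interior derivative, obtained by dropping the nonlinear kernel term, has kernel exactly $\cL$ at every $S\in\Sopt$, boundary certificates included. Compactness then gives $\mu_{\mathrm{ri}}\ge\mu_\star>0$ (\cref{prop:boolean-modulus}). It is this identity, not the affine-hull description of $\Sopt$ you point to, that controls the boundary. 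It says that no nonzero $E\in\cT_d$ has $E_{VV}=0$, which is already the primal identity of \eqref{eq:kernel-tests} at every certificate.

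The second ingredient is the propagation mechanism of (ii)$\Rightarrow$(iii), which never differentiates at the moving base. It runs as follows:
\begin{enumerate}[label=\textup{(\alph*)},leftmargin=2.2em,itemsep=1pt]
\item exact projection secants with $E_k+F_k=H_k$;
\item the energy identity from $X_kS_k=0$, which forces both complementarity gaps to be $o(t_k^2)$;
\item the PSD product inequality, which converts these into $E\bar S=0$ and $\bar XF=0$ with no eigenvalue bounds;
\item fixed anchors, which kill the wrong-face blocks and bound the kernel-cone parts of the normal (\cref{lem:normal-bound});
\item a closing sign contradiction.
\end{enumerate}

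Two smaller points. Metric equivalence alone does not transfer an error bound to a different operator; you need its residual to dominate the canonical one. For ADMM and sGS--ADMM this is the $\mathsf D_\tau$ rescaling of their shared shadow map, and for PDHG it is the invertible factorization $\mathsf L_{\eta_x,\eta_y}$ in the chart. Also, for $d\ge2$ every objective with a strictly complementary certificate also has rank-deficient boundary certificates (\cref{prop:degeneracy}). So a dimension count on ``a rank-deficient certificate exists'' targets the wrong set. The paper instead proves the SC cell open (via the Gram section) and dense in the exact cell (via $S+\eps S_{\mathrm{int}}$), then applies the semialgebraic frontier theorem (\cref{thm:cell-generic}).
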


\paragraph{Contributions.}
Our contributions are threefold.
\begin{enumerate}[label=\textup{(\Roman*)},leftmargin=2.6em]
\item \textbf{Residual-field rigidity} (\cref{sec:rigidity}). For a
semidefinite program with compact optimal sets and one
strictly complementary optimal pair, where neither optimum need be
unique, we prove that five properties of the Douglas--Rachford residual
are equivalent. They are a uniform relative-interior derivative modulus,
two finite kernel identities on the relative boundaries of the two
optimal sets, a setwise error bound around the fixed-point set, a uniform modulus
along every boundary normal, and exact first-order linearization at every endpoint. The proof uses exact projection
secants instead of Taylor expansion at moving endpoints, and
fixed anchors instead of spectral gaps. The theorem is Boolean-free. To
our knowledge, it is the first error bound for a splitting residual on the
semidefinite cone that holds uniformly around a positive-dimensional
optimal set with singular boundary points;
earlier bounds cover polyhedral, piecewise linear-quadratic, and
well-posed conic feasibility
problems~\cite{pena21arxiv-dr-error-bound,aspelmeier16siims-admm-dr-local-linear}
or concern the objective rather than the splitting
residual~\cite{liao24neurips-inexact-alm,cui16arxiv-superlinear-alm-sdp}.
\item \textbf{Boolean verification} (\cref{sec:boolean}). We verify the
hypothesis of the rigidity theorem for exact Boolean Moment--SOS
relaxations: two quotient-space identities---a constant-diagonal identity
and an XOR-class identity---reduce the required relative-interior modulus to
a finite-dimensional kernel computation and a compactness argument.
\item \textbf{Algorithmic transfer and genericity}
(\cref{sec:algorithms,sec:genericity}). We transfer the canonical error
bound to ADMM, sGS--ADMM, and PDHG through residual
rescalings and an affine endpoint chart; with averagedness in
method-specific metrics, this yields the rates of \cref{thm:informal-A}. A separate semialgebraic argument
establishes its generic half.
\end{enumerate}
To our knowledge, the geometry of the exact regime at orders $d\ge2$ has
not been used to analyze first-order solvers; \cref{app:related} organizes
the related work into four threads.

\paragraph{Organization.}
\Cref{app:related} discusses related work,
\crefrange{app:formulation}{app:genericity} carry the deferred proofs, and
\cref{app:experiments} details the experiments.

\paragraph{Notation.}
All spaces are finite-dimensional and real. $\Sym^p$ is the space of
symmetric $p\times p$ matrices with the trace inner product
$\ip{H_1}{H_2}=\tr(H_1H_2)$, Frobenius norm $\normF{\cdot}$, and PSD cone
$\Sym_+^p$; $\Pi_+$ is the Frobenius projection onto $\Sym_+^p$ and
$\Pi_-:=\Id-\Pi_+$. For $S\subseteq[n]$, $x^S:=\prod_{i\in S}x_i$; optima
carry star subscripts ($x_\star$, $X_\star$), optimal sets are calligraphic
($\Sopt$, $\Zopt$), and we distinguish the number of Boolean variables $n$,
a general SDP matrix size $p$, and the moment matrix size $N_d$ (with
$p=N_d$ in the Boolean specialization). Directional derivatives $F'(z;h)$, the cones $T_C(z)$, $N_C(z)$, the
operators $\ri$, $\rb$, $\aff$, $\cl$, and the vector and operator norms
are standard and collected in \cref{app:spectral}.

%% file: sections/degeneracy.tex
\section{Degeneracy of Exact Boolean Relaxations}
\label{sec:degeneracy}

This section makes the degeneracy phenomenon concrete; full proofs are in
\cref{app:formulation}.

\paragraph{Boolean quotient moments.}
Fix $1\le d\le n$ and the Boolean ideal
$I_{\mathrm B}:=\langle x_1^2-1,\ldots,x_n^2-1\rangle$, so that every
polynomial has a unique square-free representative and
$x^Ax^B\equiv x^{A\triangle B}$, where $\triangle$ is the symmetric
difference. Index monomials by
\begin{equation}\label{eq:index-sets}
  \cI_d:=\{T\subseteq[n]:|T|\le\min\{d,n\}\}, \quad 
  N_d:=|\cI_d|,\quad
  b_d(x):=(x^T)_{T\in\cI_d}\in\R^{N_d};
\end{equation}
when $d$ is fixed we write $N=N_d$. For $\Gamma\in\cI_{2d}$, define the
XOR-class matrices $(A_\Gamma)_{A,B}:=\mathbf 1_{\{A\triangle B=\Gamma\}}$,
indexed by $A,B\in\cI_d$. They have disjoint supports, are Frobenius
orthogonal, and $A_\emptyset=I_N$. A degree-$2d$ pseudo-moment sequence
$y=(y_\Gamma)_{\Gamma\in\cI_{2d}}$ is encoded by the moment matrix
\begin{equation}\label{eq:moment-matrix}
  M_d(y):=\sum_{\Gamma\in\cI_{2d}}y_\Gamma A_\Gamma,
  \qquad\text{i.e.}\quad (M_d(y))_{A,B}=y_{A\triangle B},
\end{equation}
and the feasible \emph{moment body} is the spectrahedron
$\cX_d:=\{M_d(y):y_\emptyset=1,\ M_d(y)\succeq0\}$. Writing
$C_f:=\sum_\Gamma f_\Gamma A_\Gamma/\normF{A_\Gamma}^2$ for the canonical
matrix representative of $f=\sum_\Gamma f_\Gamma x^\Gamma$ (so that
$\ip{C_f}{M_d(y)}=\sum_\Gamma f_\Gamma y_\Gamma$), at every admissible
order ($\deg f\le2d$) the relaxation of \eqref{eq:pbo} is
\begin{equation}\label{eq:moment-primal}
  (\mathrm P_d)\qquad p_d(f):=\min_{X\in\cX_d}\ \ip{C_f}{X}.
\end{equation}
The identity matrix is strictly feasible and $\cX_d$ is compact, so the
minimum is attained. Moreover, $(\mathrm P_d)$ is a standard-form SDP under
an explicit independent equality encoding $\cA_dX=e_1$ of full row rank
$m=N(N+1)/2-|\cI_{2d}|+1$ (\cref{prop:encoding}). Its exact dual is the
SOS problem: maximize $\gamma$ subject to
$f-\gamma\equiv b_d(x)^\top Sb_d(x)\pmod{I_{\mathrm B}}$ with the Gram
matrix $S\succeq0$ (\cref{app:formulation}). When the
relaxation is exact with value $f_\star=\OPT(f)$, the optimal Gram
certificates form the set
\begin{equation}\label{eq:slack-set}
  \Sopt:=\{S\succeq0:\ b_d(x)^\top Sb_d(x)\equiv f-f_\star
  \ (\mathrm{mod}\ I_{\mathrm B})\},
\end{equation}
which is compact because every $S\in\Sopt$ has the same trace
$f_\emptyset-f_\star$ (\cref{lem:slack-trace}).

\paragraph{Unique rank-one exactness.}
If $f$ has a unique Boolean minimizer $x_\star$, define the first order of
\emph{unique rank-one exactness},
\begin{equation}\label{eq:dstar}
  d_\star(f):=\min\Bigl\{1\le d\le n:\ \deg f\le2d,\quad
  \argmin(\mathrm P_d)=\{b_d(x_\star)b_d(x_\star)^\top\}\Bigr\}.
\end{equation}
This is well defined with $d_\star(f)\le n$, and it persists upward: unique
rank-one exactness at $d_0$ implies it at every $d_0\le d\le n$
(\cref{prop:eventual}). Throughout the Boolean sections,
\cref{sec:degeneracy} and \crefrange{sec:boolean}{sec:algorithms}, we fix
an order $d\ge d_\star(f)$ and abbreviate $X_\star=uu^\top=Nqq^\top$ with
$u:=b_d(x_\star)\in\{\pm1\}^N$ and $q:=u/\sqrt N$.

\begin{remark}[Symmetric objectives need a gauge]\label{rem:gauge}
A unique Boolean minimizer excludes symmetric models such as MAX-CUT or
zero-field Ising objectives, whose minimizers come in $\pm$ pairs; they
enter after a standard gauge fixing (fixing one spin), and we use them as
motivation only.
\end{remark}

\paragraph{Merged squares and degeneracy.}
Every optimal certificate annihilates $u$, since
$u^\top Su=f(x_\star)-f_\star=0$ and $S\succeq0$; so $\Sopt$ lives on the
cone $\Sym_+(V):=\{S\succeq0:\ran(S)\subseteq V\}$, where
$U:=\ran(X_\star)=\spanof\{q\}$ and $V:=\ker(X_\star)=q^\perp$. Now let
$d\ge2$, choose two coordinates $i\ne j$, put
$a:=x_{\star,i}x_{\star,j}$, and define two vectors in $\R^N$:
$w_1:=e_\emptyset-a\,e_{\{i,j\}}$ and $w_2:=e_{\{i\}}-a\,e_{\{j\}}$.
Both are orthogonal to $u$, and because
$(x^{\{i,j\}})^2\equiv(x^{\{i\}})^2\equiv1$, both rank-one Gram matrices
represent the \emph{same} Boolean polynomial:
\begin{equation}\label{eq:same-poly}
  b_d(x)^\top w_1w_1^\top b_d(x)
  \equiv 2\bigl(1-a\,x_ix_j\bigr)
  \equiv b_d(x)^\top w_2w_2^\top b_d(x)
  \pmod{I_{\mathrm B}}.
\end{equation}
Quotienting by $x_i^2=1$ thus merges distinct squares and creates nonzero
\emph{same-polynomial directions} such as $w_1w_1^\top-w_2w_2^\top$. These
directions live in the subspace $\cL:=\ran(\cA_d^*)\cap\Sym(V)$, where
$\Sym(V)$ denotes the symmetric matrices supported on $V$. Differences of
optimal certificates always lie in $\cL$, and under strict complementarity
$\cL$ is the direction space of $\Sopt$ (\cref{lem:product-geometry}).

\begin{proposition}[Primal degeneracy at every order $d\ge2$]\label{prop:degeneracy}
For the independent encoding $\cA_d$, primal nondegeneracy of
$X_\star$~\cite{alizadeh9mp-complementarity-nondegeneracy} holds if and only
if $m=N$, and $\dim \cL=m-N=\frac{N(N-1)}2-|\cI_{2d}|+1$. At $d=1$ one has
$m=N$; at every $2\le d\le n$ one has $m>N$
(indeed $w_1w_1^\top-w_2w_2^\top\in \cL\setminus\{0\}$), so the rank-one
primal optimum is degenerate. If, in addition, $\Sopt$ contains a
\emph{strictly complementary} certificate $\Ssc$, i.e.\
$\rank(\Ssc)=N-1$, then $\aff\Sopt=\Ssc+\cL$, so $\Sopt$ is
positive-dimensional with $\dim\Sopt=m-N$.
\end{proposition}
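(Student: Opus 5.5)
We use the Alizadeh--Haeberly--Overton characterization of primal nondegeneracy. For a rank-$r$ optimum $X_\star$ with range $U$ and kernel $V$, $X_\star$ is primal nondegenerate if and only if $\ran(\cA_d^*)\cap\Sym(V)=\{0\}$. Equivalently, the tangent space of rank-$r$ matrices at $X_\star$ together with $\ker\cA_d$ spans $\Sym^N$. With $r=1$ and $\dim\Sym(V)=N(N-1)/2$, the whole statement reduces to computing $\dim\cL$ for $\cL=\ran(\cA_d^*)\cap\Sym(V)$.

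\textbf{Step 1: identify $\ran(\cA_d^*)$.} Since $\cA_d$ has full row rank $m$ (\cref{prop:encoding}), $\ran(\cA_d^*)$ is $m$-dimensional. Its orthogonal complement $\ker\cA_d$ consists of the $H$ satisfying the homogeneous constraints. Those constraints say $H$ is constant on each XOR class $\Gamma\neq\emptyset$ and has trace zero on the class $\Gamma=\emptyset$. Hence
\[
\ker\cA_d=\spanof\{A_\Gamma:\Gamma\ne\emptyset\},
\]
and $\ran(\cA_d^*)$ is the set of $H$ with $\ip{H}{A_\Gamma}=0$ for all $\Gamma\neq\emptyset$. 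These are exactly the Gram matrices representing a constant polynomial modulo $I_{\mathrm B}$. I will check this against the explicit encoding in \cref{prop:encoding}.

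\textbf{Step 2: count $\dim\cL$.} Consider the linear map
\[
\Phi:\Sym(V)\to\R^{|\cI_{2d}|-1},\qquad H\mapsto(\ip{H}{A_\Gamma})_{\Gamma\ne\emptyset}.
\]
Then $\cL=\ker\Phi$, so $\dim\cL=N(N-1)/2-\rank\Phi$. The main obstacle is proving that $\Phi$ is surjective, i.e.\ that the restrictions of the $A_\Gamma$ ($\Gamma\neq\emptyset$) to $\Sym(V)$ remain linearly independent.

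\begin{itemize}
\item \emph{Reduction.} The compression of $A_\Gamma$ to $V$ is $P_VA_\Gamma P_V$ with $P_V=I-qq^\top$. A linear relation $\sum_\Gamma c_\Gamma P_VA_\Gamma P_V=0$ means that $M:=\sum_\Gamma c_\Gamma A_\Gamma$ has the form $M=qv^\top+vq^\top$.
\item \emph{Gauge to $x_\star=\mathbf 1$.} Let $D:=\diag(u)$. Then $DA_\Gamma D=x_\star^\Gamma A_\Gamma$, because $u_Au_B=x_\star^{A\triangle B}$. Conjugating by $D$ reduces to the case $x_\star=\mathbf 1$, where $q=\mathbf 1/\sqrt N$.
\item \emph{Use the $\mathbb Z_2^n$ symmetry.} Each $A_\Gamma$ is invariant under the relabeling $A\mapsto A\triangle\{k\}$, restricted suitably to $\cI_d$. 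Comparing entries of $M$ in the same XOR class then forces $v$ to be constant on the relevant supports. This should give $M\propto\mathbf 1\mathbf 1^\top$ restricted, which equals $\sum_\Gamma A_\Gamma$ including the $\Gamma=\emptyset$ term. Such an $M$ has nonzero diagonal, contradicting $c_\emptyset=0$ unless all $c_\Gamma$ vanish.
\item \emph{Backup.} If the symmetry argument is messy for truncated index sets, I would instead exhibit, for each $\Gamma\neq\emptyset$, an explicit $H_\Gamma\in\Sym(V)$ with $\ip{H_\Gamma}{A_{\Gamma'}}=\delta_{\Gamma\Gamma'}$. A candidate is a difference $e_Ae_B^\top+e_Be_A^\top-(\text{correction along }q)$.
\end{itemize}

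Surjectivity gives
\[
\dim\cL=\frac{N(N-1)}2-|\cI_{2d}|+1=m-N.
\]
Combined with the AHO criterion, nondegeneracy holds if and only if $m=N$.

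\textbf{Step 3: orders.}
\begin{itemize}
\item At $d=1$ we have $N=n+1$ and $|\cI_2|=1+n+\binom n2$. Then $m=\frac{(n+1)(n+2)}2-|\cI_2|+1=n+1=N$, so $X_\star$ is nondegenerate.
\item For $2\le d\le n$, the matrix $w_1w_1^\top-w_2w_2^\top$ lies in $\Sym(V)$ because $w_1,w_2\perp u$. It lies in $\ran(\cA_d^*)$ by \eqref{eq:same-poly}: the difference represents the zero polynomial, so its XOR-class sums vanish for every $\Gamma\neq\emptyset$. It is nonzero because $w_1$ and $w_2$ have disjoint supports. Hence $\cL\neq\{0\}$, $m>N$, and $X_\star$ is degenerate.
\end{itemize}

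\textbf{Step 4: strictly complementary certificate.} Suppose $\Ssc\in\Sopt$ has $\rank\Ssc=N-1$, so $\ran\Ssc=V$. For any $L\in\cL$, $\Ssc+tL$ represents $f-f_\star$ (since $L\in\ran(\cA_d^*)$ has zero XOR-class sums off $\emptyset$ and trace zero) and lies in $\Sym(V)$. It remains PSD for small $|t|$, because $\Ssc$ is positive definite on $V$. Hence $\Ssc+\cL\subseteq\aff\Sopt$.

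Conversely, differences of optimal certificates lie in $\cL$ (\cref{lem:product-geometry}), which gives $\aff\Sopt=\Ssc+\cL$. Therefore $\dim\Sopt=\dim\cL=m-N>0$ for $d\ge2$.
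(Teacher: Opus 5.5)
Your reduction is correct and is the paper's argument in dual form. The paper shows two facts: that $\cA_d$ is injective on the $N$-dimensional tangent space $\Sym(V)^\perp$, and that $\cR_d+\Sym(V)=\Sym^N$. You instead show that $\Phi$ is surjective. As your own reduction bullet shows, all of these are equivalent to the single identity $\cT_d\cap\Sym(V)^\perp=\{0\}$. You flag that identity as the main obstacle, but you never establish it.

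The symmetry route fails as stated. For $d<n$ the relabeling $A\mapsto A\triangle\{k\}$ does not map $\cI_d$ into itself: a set of size $d$ avoiding $k$ is sent to a set of size $d+1$. So ``restricted suitably'' has no content, and the backup dual family $H_\Gamma$ is only a candidate. As written, the formula $\dim\cL=m-N$ is unsupported, and with it the claims ``nondegenerate iff $m=N$'' and ``$m>N$''.

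The missing idea is one line, and you already hold both halves of it:
\begin{itemize}
\item Every matrix in $\cT_d$ has zero diagonal, since a class $\Gamma\neq\emptyset$ contains no pair with $A=B$.
\item The diagonal of $qv^\top+vq^\top$ is $(2q_Av_A)_{A\in\cI_d}$, with every $q_A=\pm N^{-1/2}\neq0$.
\end{itemize}
Hence $v=0$, so $M=0$, and all $c_\Gamma$ vanish because the $A_\Gamma$ have disjoint nonzero supports. No gauge and no cube symmetry are needed; this is the paper's constant-diagonal identity. With it, the rest of your proposal goes through: the AHO reformulation, the count, the case $d=1$, the witness, and the final clause.

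One small repair is needed in Step~4. ``Trace zero'' is not a property of $\ran(\cA_d^*)$, which contains $I_N$. For $L\in\cL$ it follows from $Lu=0$:
\[
0=u^\top Lu=\sum_{\Gamma}x_\star^\Gamma\ip{A_\Gamma}{L}=\tr L,
\]
using $(uu^\top)_{A,B}=x_\star^{A\triangle B}$ and $\ip{A_\Gamma}{L}=0$ for $\Gamma\neq\emptyset$. You need this to conclude that $\Ssc+tL$ represents $f-f_\star$. Alternatively, note that $\Ssc+tL$ is dual feasible and complementary to $X_\star$, hence optimal, as in \cref{lem:product-geometry}.
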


The proof is in \cref{app:formulation}. Degeneracy here concerns the
optimum relative to the constraints rather than singularity of the matrix
$X_\star$ itself. The encoding is always \emph{injective} on the tangent
space of the rank-one PSD manifold, while nondegeneracy asks for
\emph{surjectivity} onto $\R^m$.

\paragraph{The geometric problem.}
Suppose from now on that $\Sopt$ contains one strictly complementary
certificate $\Ssc$; equivalently, $X_\star+\Ssc\succ0$. Then
$\Sopt=(\Ssc+\cL)\cap\Sym_+(V)$ (\cref{lem:product-geometry}), a compact convex
slice of the PSD cone supported on $V$. The certificate $\Ssc$ lies in the
relative interior of this slice, every certificate on its relative boundary
loses rank, and by \cref{prop:degeneracy} the boundary is nonempty for
$d\ge2$. A first-order method is free to converge to any point of the
slice; at a relative-boundary point, analyses that require strict
complementarity at the selected limit do not apply, and general conic
programming \emph{does} produce arbitrarily slow regions
there~\cite{kang26arxiv-afterimage}.

%% file: sections/rigidity.tex
\section{Residual-Field Rigidity}
\label{sec:rigidity}

This section is Boolean-free. \Cref{ass:two-sided} below asks only for
compact optimal sets and one strictly complementary pair; neither optimum
needs to be unique. The result is a theorem that makes five properties of
the residual field equivalent, among them a setwise error bound.
\Cref{sec:boolean} verifies one of them and inherits the other four.

\subsection{The canonical residual and the two-sided geometry}
\label{sec:rigidity:setup}

Consider the primal--dual pair
\begin{equation}\label{eq:sdp-pair}
 \begin{aligned}
 (\mathrm P)\quad &\min_{X\succeq0}\ \ip CX
                 &&\text{s.t. }\cA X=b,\\
 (\mathrm D)\quad &\max_{\lambda\in\R^m,\,S\succeq0}\ b^\top\lambda
                 &&\text{s.t. }\cA^*\lambda+S=C,
 \end{aligned}
\end{equation}
with $\cA:\Sym^p\to\R^m$ surjective; in the dual, only the slack $S$ is
constrained to be PSD. Set
$\cR:=\ran(\cA^*)$, $\cN:=\ker(\cA)=\cR^\perp$, and
$\cA^\dagger:=\cA^*(\cA\cA^*)^{-1}$, and let $\Prow:=\cA^\dagger\cA$ and
$\Pnull:=\Id-\Prow$ be the orthogonal projectors onto $\cR$ and $\cN$ on
the \emph{matrix} space $\Sym^p$; put $X_b:=\cA^\dagger b$. For fixed
$\sigma>0$, define the two orthogonal residual
components and the canonical Douglas--Rachford (DR) map
\begin{equation}\label{eq:res-components}
 \rpar(Z):=\Prow\bigl(\Pi_+(Z)-X_b\bigr),\quad
 \rper_\sigma(Z):=\Pnull\bigl(\Pi_-(Z)+\sigma C\bigr),\quad
 \delta_\sigma:=-\rpar-\rper_\sigma,\quad
 T_{\sigma,1}:=\Id+\delta_\sigma .
\end{equation}
This is the DR map (with $\Pi_+$ applied before the affine step) for
$F(X)=\ip CX+\iota_{\{\cA X=b\}}(X)$ and $G=\iota_{\Sym_+^p}$; its fixed
points are $X-\sigma S$ over all KKT pairs, with $X=\Pi_+(Z)$ and
$S=-\sigma^{-1}\Pi_-(Z)$ (\cref{lem:drs}, \cref{app:drs-slice}).

\begin{assumption}[Two-sided optimal geometry]\label{ass:two-sided}
The primal optimal set $\Xopt$ and the optimal slack set $\Sopt$ are
nonempty and compact, and there is an optimal pair $(\Xsc,\Ssc)$ with
$\Xsc\Ssc=0$, $\Xsc+\Ssc\succ0$, and $1\le\rank\Xsc<p$.
\end{assumption}

We call $(\Xsc,\Ssc)$ the \emph{anchor pair}, fixed once and for all;
the uniform spectral estimates below rest on it, never on the moving
optima. Set $U:=\ran\Xsc$ and
$V:=\ran\Ssc=U^\perp$, let $P_U$, $P_V$ be the orthogonal projectors onto
them, write $\Sym(U)$, $\Sym(V)$ for the symmetric matrices supported on
$U$, $V$, and define the primal and dual direction spaces and their sum
\begin{equation}\label{eq:directions}
 \cM:=\cN\cap\Sym(U),\qquad \cL:=\cR\cap\Sym(V),\qquad \cD:=\cM\oplus \cL .
\end{equation}
Every optimal $X$ is supported on $U$ and every optimal $S$ on $V$, so
(\cref{lem:product-geometry}, \cref{app:drs-slice})
\begin{equation}\label{eq:product}
 \Xopt=(\Xsc+\cM)\cap\Sym_+(U),\quad
 \Sopt=(\Ssc+\cL)\cap\Sym_+(V),\quad
 \Zopt=\Fix T_{\sigma,1}=\Xopt-\sigma\Sopt .
\end{equation}
We call the points of $\Zopt$ \emph{endpoints}, where convergent runs
end. Each endpoint $\bar Z$ has a unique representation
$\bar X-\sigma\bar S$, and we call $\bar X$ and $\bar S$ its two
\emph{factors}. Also $\aff\Zopt=\Xsc-\sigma\Ssc+\cD$, and
$\bar Z\in\ri\Zopt$ exactly when $\bar X$ is positive definite on $U$ and
$\bar S$ on $V$. In words, $\Zopt$ is a product of two PSD slices, and
its relative boundary consists of the endpoints at which a factor loses
rank. When $\Xopt=\{X_\star\}$ is a singleton, $\cM=\{0\}$ and $\cD=\cL$,
which is the Boolean situation of \cref{sec:degeneracy}.

\subsection{Endpoints, derivatives, and the stall cone}
\label{sec:rigidity:spectral}

Fix an endpoint $\bar Z=\bar X-\sigma\bar S$ and set
$K_{\bar X}:=\ker\bar X\cap U$ and $K_{\bar S}:=\ker\bar S\cap V$, the
subspaces on which the two factors have lost rank; both are $\{0\}$
exactly on $\ri\Zopt$. Only these kernels move with the endpoint; $U$ and
$V$ stay fixed, and no continuity of eigenbases in the endpoint is ever
assumed. At an interior endpoint the PSD projection is
differentiable, and in the fixed $U\oplus V$ blocks its derivative
$J_{\bar X,\bar S}(H):=\Pi_+'(\bar Z;H)$ keeps the $UU$ block, kills the
$VV$ block, and replaces the cross block by the solution $Y$ of the
Sylvester equation
\begin{equation}\label{eq:sylvester-main}
 \bar X_{UU}\,Y+\sigma\,Y\,\bar S_{VV}=\bar X_{UU}H_{UV}
\end{equation}
(\cref{app:deriv-validity}). Writing $J$ for $J_{\bar X,\bar S}$, the
residual derivative is $\delta_\sigma'(\bar Z;H)=-\Prow J(H)-\Pnull(H-J(H))$,
with orthogonal components. Two cones organize everything that follows:
\begin{equation}\label{eq:stall-cone}
 \cC(\bar Z):=\{H\in\Sym^p:\delta_\sigma'(\bar Z;H)=0\},\qquad
 \Tan_{\Zopt}(\bar Z):=\text{the tangent cone of }\Zopt\text{ at }\bar Z .
\end{equation}
The first is the \emph{stall cone}: the directions along which the
residual vanishes to first order, so the first-order dynamics of the
splitting method stalls~\cite{kang26arxiv-admmsdp-limitdyn}; the second
collects the directions along the optimal set. Always
$\Tan_{\Zopt}(\bar Z)\subseteq\cC(\bar Z)$, since $\delta_\sigma$ vanishes
on $\Zopt$; the question is whether the stall cone is \emph{larger}
(\cref{app:limit-dynamics} relates this to the second-order limit dynamics
of~\cite{kang26arxiv-admmsdp-limitdyn}).

\subsection{The rigidity theorem}
\label{sec:rigidity:thm}

Define the relative-interior and full-normal derivative moduli
\begin{equation}\label{eq:mu-def}
 \mu_{\mathrm{ri}}:=
 \inf_{\substack{\bar Z\in\ri\Zopt\\ H\in\cD^\perp,\ \normF H=1}}
        \normF{\delta_\sigma'(\bar Z;H)},
 \qquad
 \mu_{\mathrm{nor}}:=
 \inf_{\substack{\bar Z\in\Zopt\\ H\in N_{\Zopt}(\bar Z),\ \normF H=1}}
        \normF{\delta_\sigma'(\bar Z;H)} .
\end{equation}
\begin{theorem}[Two-sided rigidity and exact linearization]\label{thm:rigidity}
Under \cref{ass:two-sided}, the following are equivalent:
\begin{enumerate}[label=\textup{(\roman*)},leftmargin=2.2em]
\item \emph{Uniform interior modulus:} $\mu_{\mathrm{ri}}>0$.
\item \emph{Complementarity rigidity:} the \emph{primal identity} (left)
holds for every $\bar S\in\rb\Sopt$ and the \emph{dual identity} (right)
for every $\bar X\in\rb\Xopt$, both named after the direction space they
test:
\begin{equation}\label{eq:kernel-tests}
 \{E\in\cN:\ P_VEP_V=0,\ E\bar S=0\}=\cM,\qquad
 \{F\in\cR:\ P_UFP_U=0,\ \bar XF=0\}=\cL .
\end{equation}
\item \emph{Setwise error bound:} there are $c_\sigma,t_\sigma>0$ with
\begin{equation}\label{eq:canonical-eb}
 \normF{\delta_\sigma(Z)}\ \ge\ c_\sigma\,\dist_F(Z,\Zopt)
 \qquad\text{whenever }\dist_F(Z,\Zopt)\le t_\sigma .
\end{equation}
\item \emph{Uniform normal modulus:} $\mu_{\mathrm{nor}}>0$.
\item \emph{Exact linearization:} $\cC(\bar Z)=\Tan_{\Zopt}(\bar Z)$ for
every $\bar Z\in\rb\Zopt$.
\end{enumerate}
Conditions (ii) and (v) hold automatically at relative-interior factors
and endpoints, so requiring them everywhere is equivalent. Whether the
conditions hold does not depend on $\sigma$; the constants do. No
positive lower bound on the nonzero eigenvalues of the varying optimal
pair $(\bar X,\bar S)$ is imposed.
\end{theorem}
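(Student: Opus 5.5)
We prove the cycle (i)$\Rightarrow$(ii)$\Rightarrow$(v)$\Rightarrow$(iv)$\Rightarrow$(iii)$\Rightarrow$(i). Two observations come first. At an interior endpoint the tangent cone is the direction space $\cD$, so $\cD^\perp$ is the full normal space there. Hence (i) is the restriction of (iv) to $\ri\Zopt$, and (ii), (v) hold automatically at interior factors and endpoints.

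For the automatic part we use the block formula \eqref{eq:sylvester-main}. At an interior endpoint, $H\in\cC(\bar Z)$ forces $\Prow J(H)=0$ and $\Pnull(H-J(H))=0$. Thus $J(H)\in\cN$ and $H-J(H)\in\cR$. Since $J(H)$ and $H-J(H)$ are supported on $U$-rows/columns and $V$-rows/columns respectively, up to the Sylvester cross block, the Sylvester equation forces the cross blocks to vanish. This yields $H\in\cM\oplus\cL=\cD$.

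At a boundary endpoint we need the directional derivative $\Pi_+'(\bar Z;H)$. It is the projection onto the PSD cone of the compressed kernel block on $K_{\bar X}\oplus K_{\bar S}$, together with the Sylvester block on the nonsingular part. The tangent cone is $T_{\Xopt}(\bar X)-\sigma T_{\Sopt}(\bar S)$, and each factor's tangent cone is $(\cM$ or $\cL)$ intersected with matrices whose $K$-compression is PSD. Writing $H=E-\sigma F$ with $E:=\Pi_+'(\bar Z;H)$, a stalled $H$ has $E\in\cN$ and $F\in\cR$. The PSD sign structure on the kernels puts the compressed parts in the right cones. The remaining cross terms between $K_{\bar S}$ and $U$ (respectively $K_{\bar X}$ and $V$) are exactly the matrices tested in \eqref{eq:kernel-tests}: $E\in\cN$ with $P_VEP_V=0$ and $E\bar S=0$. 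Thus (ii) says precisely that these cross parts lie in $\cM$ or $\cL$. This gives (ii)$\Leftrightarrow$(v) pointwise.

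\textbf{(v)$\Rightarrow$(iv).} Suppose $\mu_{\mathrm{nor}}=0$. Take $\bar Z_k$ and unit normals $H_k$ with $\delta_\sigma'(\bar Z_k;H_k)\to0$. By compactness, $\bar Z_k\to\bar Z$ and $H_k\to H$. We cannot pass to the limit in the derivative directly, because the spectral decompositions jump. Instead we use exact projection secants: for the fixed anchor, $\Pi_+(\bar Z+tH)$ is controlled by the $1$-Lipschitz and firmly nonexpansive structure of $\Pi_+$, uniformly in the endpoint. Concretely, $\delta_\sigma'(\bar Z;\cdot)$ is positively homogeneous and Lipschitz, and $\normF{\delta_\sigma'(\cdot;\cdot)}$ is lower semicontinuous along such sequences. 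Verifying this is the main obstacle.

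Given lower semicontinuity, the limit $H$ is a stalled direction. Outer semicontinuity of normal cones of a convex set gives $H\in N_{\Zopt}(\bar Z)$. By (v), $H\in\cC(\bar Z)=\Tan_{\Zopt}(\bar Z)$, so $H$ lies in both the tangent and normal cones. Hence $\ip HH=0$, contradicting $\normF H=1$.

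The semicontinuity is the step I expect to be hardest, since it must hold without spectral gaps. I would first try writing $\Pi_+(\bar Z_k+tH)-\Pi_+(\bar Z_k)$ via the Moreau identity, and bounding its deviation from $t\,\Pi_+'(\bar Z_k;H)$ only in terms of the fixed anchor blocks $U$, $V$.

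\textbf{(iv)$\Rightarrow$(iii).} For $Z$ near $\Zopt$, let $\bar Z$ be its projection, so that $H=(Z-\bar Z)/\normF{Z-\bar Z}$ is a unit normal. We need $\delta_\sigma(Z)=t\,\delta_\sigma'(\bar Z;H)+o(t)$ with $o(t)$ uniform over endpoints. Directional differentiability of $\Pi_+$ is not uniform in general, so we argue by contradiction with sequences, as in the previous step, and reuse the secant estimate. Uniformity is obtained by the compactness of $\Zopt$ and the same lower semicontinuity.

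\textbf{(iii)$\Rightarrow$(i).} For $\bar Z\in\ri\Zopt$ and $H\in\cD^\perp$, the point $\bar Z+tH$ projects onto $\bar Z$ for small $t$. Hence $\normF{\delta_\sigma(\bar Z+tH)}\ge c_\sigma t$, and dividing by $t$ gives $\normF{\delta_\sigma'(\bar Z;H)}\ge c_\sigma$. This bound is uniform, so $\mu_{\mathrm{ri}}\ge c_\sigma>0$.

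\textbf{(i)$\Rightarrow$(ii).} Suppose the primal identity fails at some $\bar S\in\rb\Sopt$, witnessed by $E$. Approach $\bar S$ along interior points $\bar S_k=\bar S+\epsilon_k(\Ssc-\bar S)$, and build test directions $H_k\in\cD^\perp$ from the cross part of $E$ on $K_{\bar S}$. The Sylvester solution on $\ker\bar S_k$ has eigenvalues of order $\epsilon_k$, and this forces $\delta_\sigma'(\bar Z_k;H_k)\to0$, contradicting (i).

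Finally, $\sigma$-independence follows from (ii), which does not involve $\sigma$.
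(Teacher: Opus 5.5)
\textbf{The gap.} The error bound (iii) is the hard part of the theorem, and in your cycle it comes only from (v)$\Rightarrow$(iv)$\Rightarrow$(iii). Both steps rest on a lower semicontinuity of $(\bar Z,H)\mapsto\normF{\delta_\sigma'(\bar Z;H)}$, or equivalently a first-order expansion of $\delta_\sigma$ that is uniform in the base endpoint. You leave this unproven and propose to get it from nonexpansiveness of $\Pi_+$ and the fixed blocks $U,V$ alone. Under \cref{ass:two-sided} alone it is false. When kernels grow in the limit, $\Pi_+'(\bar Z_k;\cdot)$ converges to a limiting Jacobian whose weights on the $K_{\bar X}\times K_{\bar S}$ block are arbitrary limits of $\xi_i/(\xi_i+\sigma s_j)\in[0,1]$. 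It does not converge to $\Pi_+'(\bar Z;\cdot)$ with its nonlinear block $\Pi_+(\widetilde H_{KK})$.

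A concrete case is the tied instance of \cref{app:tied-objectives}, in its diagonalized coordinates with $N=4$. Take $\bar X=4e_1e_1^\top$ and $\bar S=\tfrac12(e_3+e_4)(e_3+e_4)^\top$, set $k=(e_3-e_4)/\sqrt2$, and use the unit direction $\hat H=(e_2k^\top+ke_2^\top)/\sqrt2\in\cR\cap\cD^\perp$. Consider the interior endpoints $\diag(4-\eps,\eps,0,0)-\sigma S_\eta$, where $S_\eta\in\ri\Sopt$ has eigenvalue $\eta$ on $k$. There $\normF{\delta_\sigma'(\cdot\,;\hat H)}=\eps/(\eps+\sigma\eta)$, which tends to $0$ when $\eta/\eps\to\infty$. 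At the limit endpoint, however, $\Pi_+'(\bar Z;\hat H)=\tfrac{1}{2\sqrt2}(e_2+k)(e_2+k)^\top$, so $\normF{\delta_\sigma'(\bar Z;\hat H)}=1/\sqrt2$. This instance violates (v), so the theorem is not contradicted. But the example shows that your semicontinuity must use (v) or (ii) essentially, and you give no mechanism by which (v), a statement about exact stall directions at each fixed endpoint, controls near-stall directions at nearby endpoints. The same moving-base problem blocks (iv)$\Rightarrow$(iii): when positive eigenvalues of $\bar X_k,\bar S_k$ are of order $t_k$, the quotient $\delta_\sigma(\bar Z_k+t_kH_k)/t_k$ need not be close to $\delta_\sigma'(\bar Z_k;H_k)$.

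\textbf{The missing idea.} The paper never differentiates at moving endpoints; it proves (ii)$\Rightarrow$(iii) directly, by contradiction.
\begin{itemize}
\item Take $Z_k$ with $t_k=\dist_F(Z_k,\Zopt)\to0$ and residual $o(t_k)$, and form the exact secants $E_k=(\Pi_+(Z_k)-\bar X_k)/t_k$ and $F_k=-\sigma(S_k-\bar S_k)/t_k$. Then $E_k+F_k=H_k$, and $\Prow E_k\to0$, $\Pnull F_k\to0$.
\item The energy identity $\ip{E_k}{F_k}=\sigma t_k^{-2}(\ip{X_k}{\bar S_k}+\ip{\bar X_k}{S_k})\ge0$ makes both complementarity gaps $o(t_k^2)$.
\item The PSD product inequality then yields $E\bar S=0$ and $\bar XF=0$, with no eigenvalue bounds.
\item Pairing with the fixed anchors $\Ssc,\Xsc$ kills $P_VEP_V$ and $P_UFP_U$.
\item So (ii) places $H=E+F\in\cD$. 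The anchor-bounded normal decomposition $H_k=Y_k-W_{U,k}+W_{V,k}$, together with the secant signs $\ip{E_k}{W_{U,k}}\ge0\ge\ip{F_k}{W_{V,k}}$, then contradicts $\normF{H_k}=1$.
\end{itemize}
After this, (iii)$\Rightarrow$(iv)$\Rightarrow$(i) and (iii)$\Rightarrow$(v) are short, and (v)$\Rightarrow$(ii) is read off at the anchored endpoints $\Xsc-\sigma\bar S$ and $\bar X-\sigma\Ssc$.

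\textbf{Your other steps.} Your (iii)$\Rightarrow$(i) matches the paper. Your (i)$\Rightarrow$(ii) follows the paper's anchored argument in outline; it needs the primal factor fixed at $\Xsc$ and is cleanest applied to $E$ itself through $\normF{\delta_\sigma'(\bar Z;H)}\ge\mu_{\mathrm{ri}}\dist_F(H,\cD)$. Your (ii)$\Rightarrow$(v) is true pointwise, but your sketch skips the two steps that make $E=\Pi_+'(\bar Z;H)$ a test matrix in \eqref{eq:kernel-tests}:
\begin{itemize}
\item $\ip EF=0$, with weights $\Omega_{ij}(1-\Omega_{ij})>0$, kills the $\ran\bar X\times\ran\bar S$ block.
\item $P_VEP_V\succeq0$ together with $\ip E{\Ssc}=\ip E{\bar S}=0$ (using $E\in\cN$ and $\Ssc-\bar S\in\cR$) forces $P_VEP_V=0$; the argument for $F$ is symmetric.
\end{itemize}
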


\paragraph{What each condition says.}
\emph{(i) Stiffness across the flat interior.} Transverse to the optimal
set, the residual responds at first order, with a lower bound
$\mu_{\mathrm{ri}}$ that is uniform over the interior endpoints, however
close they sit to the singular boundary. This is the
verifiable condition. Interior endpoints are nonsingular matrices, where
the residual is real-analytic with the explicit linear derivative
\eqref{eq:sylvester-main}, so checking (i) is linear algebra plus
compactness; \cref{sec:boolean} does this.
\emph{(ii) Complementarity rigidity.} At an endpoint where
one factor has lost rank, a direction can couple its vanished eigenspace
with the support of the other factor, and the linearized residual
does not see it. The two identities say that every such direction
already runs along the optimal set.
\emph{(iii) The residual measures the distance.} In a tube around the
optimal set, the KKT residual is at least a constant times the distance
to it, which averaged splitting methods convert into linear rates
(\cref{sec:algorithms}).
\emph{(iv) Stiffness along every normal.} At a singular endpoint the
normal cone is larger than $\cD^\perp$, since directions leaving the PSD
cone through a vanished eigenspace are normal there too; (iv) asks for
the response of (i) along all these normals, with a bound uniform
over all endpoints.
\emph{(v) Stall cone equals tangent cone.} The directions along which
the residual vanishes to first order are exactly those along the optimal
set; equivalently, no direction leaves $\Zopt$ while the first-order
dynamics stalls---the geometric content of ``benign''.

\subsection{Proof outline of~\cref{thm:rigidity}}
\label{sec:rigidity:proof}

\paragraph{Two obstacles.}
A pointwise argument would differentiate the residual at the nearest
endpoint; two features of the degenerate geometry defeat this. First, the \emph{base moves}. Along a
sequence $Z_k\to\Zopt$ the nearest endpoint $\bar Z_k$ changes with $k$,
and the projection derivative is discontinuous in the endpoint exactly
where a kernel grows. So a Taylor expansion at $\bar Z_k$ has no uniform
remainder. Second, \emph{eigenvalues vanish}. The positive eigenvalues of
the moving optima $\bar X_k$, $\bar S_k$ can tend to zero, so any
bound whose constant contains their reciprocals degenerates along the
sequence.

\paragraph{Proof outline.}
We prove (i)$\Rightarrow$(ii)$\Rightarrow$(iii)$\Rightarrow$(iv)$\Rightarrow$(i)
and (iii)$\Rightarrow$(v)$\Rightarrow$(ii). The step
(ii)$\Rightarrow$(iii) carries the weight, and three devices handle the
two obstacles.

\emph{Fixed anchors.} Every uniform spectral estimate comes from the anchor pair,
through $a_X:=\lambda_{\min}(\Xsc|_U)$ and $a_S:=\lambda_{\min}(\Ssc|_V)$.
The Sylvester operator
$Y\mapsto\Xsc|_U\,Y+\sigma Y\bar S_{VV}$ has inverse norm at most
$a_X^{-1}$ for every optimal $\bar S$, singular or not, and symmetrically
with $(\sigma a_S)^{-1}$ (\cref{lem:sylvester}). Together with the
optimal-set radii of \eqref{eq:anchor-constants}, these gaps bound every
decomposition of a normal vector into a $\cD^\perp$ part and two kernel
parts (\cref{lem:normal-bound}). The anchors also prove
(i)$\Rightarrow$(ii): to test the primal identity at a boundary
certificate $\bar S$, form the interior endpoints $\Xsc-\sigma\bar S_\eps$
with $\bar S_\eps=(1-\eps)\bar S+\eps\Ssc$, whose primal factor is the
anchor and whose dual factor is pushed toward its anchor. Along the push,
the interior derivative of a direction $E$ from the left side of
\eqref{eq:kernel-tests} converges to $E$ itself. Hence the residual
derivative tends to zero, and (i) forces $E\in\cD$; the dual
identity is symmetric (\cref{app:rigidity-tests}).

\emph{Exact secants.} For (ii)$\Rightarrow$(iii) we take $Z_k$ with
$t_k:=\dist_F(Z_k,\Zopt)\to0$ and residual $o(t_k)$, and we never
differentiate. With $\bar Z_k$ the nearest endpoint, the exact projections
$X_k=\Pi_+(Z_k)$ and $S_k=-\sigma^{-1}\Pi_-(Z_k)$ define the secants
$E_k=(X_k-\bar X_k)/t_k$ and $F_k=-\sigma(S_k-\bar S_k)/t_k$, which are
bounded and sum to the unit normal $H_k=(Z_k-\bar Z_k)/t_k$. These exact
secants avoid a Taylor remainder at the moving endpoint. The small residual
says $\Prow E_k\to0$ and $\Pnull F_k\to0$, so $\ip{E_k}{F_k}\to0$. Exact
complementarity $X_kS_k=0$ gives the energy identity
$\ip{E_k}{F_k}=\sigma t_k^{-2}(\ip{X_k}{\bar S_k}+\ip{\bar X_k}{S_k})\ge0$.
Hence both complementarity gaps are $o(t_k^2)$.

\emph{The PSD product inequality.} For PSD $A,B$,
$\normF{AB}^2\le\normop A\normop B\,\ip AB$ (\cref{lem:psd-product}).
Applied to the two gaps, it gives $\normF{X_k\bar S_k}=o(t_k)$ and
$\normF{\bar X_kS_k}=o(t_k)$, that is, $E_k\bar S_k\to0$ and
$\bar X_kF_k\to0$. No eigenvalue bound enters here. 
The anchors remove the blocks on the wrong support, $(E_k)_{VV}\succeq0$
and $(F_k)_{UU}\preceq0$, since these pair to $o(1)$ with $\Ssc$ and
$\Xsc$. The limits $E,F$ are therefore exactly the directions tested in
\eqref{eq:kernel-tests}, so $E\in\cM$, $F\in\cL$, and the limit $H=E+F$ of
the unit normals lies in $\cD$. The anchor-controlled normal decomposition
and the signs of the secants against the kernel cones then give a
contradiction (\cref{app:rigidity-eb}). The remaining implications are short
(\cref{app:rigidity-linearization}). Differentiating the error bound along
a normal gives (iii)$\Rightarrow$(iv), and the inclusion
$\cD^\perp\subseteq N_{\Zopt}$ at interior endpoints gives
(iv)$\Rightarrow$(i). For (iii)$\Rightarrow$(v), a residual of order $o(t)$
along $\bar Z+tH$ forces $\dist_F(\bar Z+tH,\Zopt)=o(t)$ through
\eqref{eq:canonical-eb}, so $H$ is tangent. For (v)$\Rightarrow$(ii), the
projection derivative is computed at the two anchored endpoints, where it
fixes primal test directions and annihilates dual ones. The PSD structure
is essential (\cref{ex:not-abstract,ex:two-disks}).

%% file: sections/boolean.tex
\section{Boolean Verification of the Modulus}
\label{sec:boolean}

We now verify condition (i) of \cref{thm:rigidity}, the uniform interior
modulus, for the Boolean moment SDP of \cref{sec:degeneracy}; omitted
computations are in \cref{app:modulus}. Fix an order $d$ at which the
relaxation is uniquely rank-one exact with a strictly complementary
certificate, and instantiate \cref{sec:rigidity} with $p=N$,
$\Xsc=X_\star=Nqq^\top$, $U=\spanof\{q\}$, $V=q^\perp$, $\cA=\cA_d$,
$b=e_1$, $C=C_f$, and $X_b=I_N$. Here $\cA_d$ is the independent encoding
of \cref{prop:encoding}: its null space is the XOR-consistency direction
space $\cT_d:=\spanof\{A_\Gamma:\emptyset\ne\Gamma\in\cI_{2d}\}$ (every
matrix in it has zero diagonal), and its row space is
$\cR_d:=\cT_d^\perp$. Compactness of $\Sopt$ is the trace identity
(\cref{lem:slack-trace}), and $\Xopt=\{X_\star\}$ is a point, so
\cref{ass:two-sided} holds with $\cM=\{0\}$ and $\cD=\cL$, and the dual
identity in \eqref{eq:kernel-tests} is automatic.

\subsection{Two decisive subspace identities}

\begin{lemma}[Boolean subspace geometry]\label{lem:boolean-geometry}
With $V=q^\perp$:
\begin{align}
 \Sym(V)^\perp&=\{qv^\top+vq^\top:v\in\R^N\},\label{eq:Fq-perp}\\
 \cT_d\cap\Sym(V)^\perp&=\{0\},\label{eq:transverse-intersection}\\
 \cL=\Sym(V)\cap\cR_d&=\Sym(V)\cap\cH_d^\perp,
 \qquad \cH_d:=\spanof\{I_N\}\oplus\cT_d.\label{eq:boolean-L}
\end{align}
\end{lemma}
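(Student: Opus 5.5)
The plan is to verify the three identities in order, each by elementary linear algebra in the fixed decomposition $\R^N=U\oplus V$ with $U=\spanof\{q\}$.

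\textbf{Identity \eqref{eq:Fq-perp}.} Any $H\in\Sym^N$ splits into blocks as $H=P_UHP_U+(P_UHP_V+P_VHP_U)+P_VHP_V$. The space $\Sym(V)$ consists exactly of the matrices with only the $VV$ block, so its orthogonal complement consists of those with vanishing $VV$ block, that is, $\{H: P_VHP_V=0\}$. Such an $H$ can be written as $qv^\top+vq^\top$ with $v:=Hq-\tfrac12(q^\top Hq)q$. Conversely, $P_V(qv^\top+vq^\top)P_V=0$ because $P_Vq=0$. This proves the first identity.

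\textbf{Identity \eqref{eq:transverse-intersection}.} This is the key step. Take $E=qv^\top+vq^\top\in\cT_d$. Every element of $\cT_d$ has zero diagonal, so $E_{AA}=2q_Av_A=0$ for all $A$. Since $q=u/\sqrt N$ with $u\in\{\pm1\}^N$, every $q_A\neq0$, and hence $v=0$. The only property of $\cT_d$ needed here is the zero diagonal.

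\textbf{Identity \eqref{eq:boolean-L}.} The first equality is the definition $\cL=\cR\cap\Sym(V)$ with $\cR=\cR_d=\cT_d^\perp$. For the second, observe that $\cH_d^\perp=\cT_d^\perp\cap\{I_N\}^\perp=\cR_d\cap\{F:\tr F=0\}$, so the inclusion "$\supseteq$" is immediate. For "$\subseteq$", take $F\in\Sym(V)\cap\cR_d$; I need to show $\tr F=0$. Using $u=b_d(x_\star)$, I compute
\[
\ip{A_\Gamma}{uu^\top}=\sum_{A\triangle B=\Gamma}x_\star^Ax_\star^B=x_\star^\Gamma\normF{A_\Gamma}^2 ,
\]
which gives the expansion
\[
uu^\top=\sum_\Gamma x_\star^\Gamma A_\Gamma=I_N+\sum_{\Gamma\ne\emptyset}x_\star^\Gamma A_\Gamma .
\]
Hence $uu^\top\in\cH_d$, and therefore $I_N-uu^\top\in\cT_d$. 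Now $F\in\Sym(V)$ gives $\ip F{uu^\top}=u^\top Fu=0$, and $F\in\cR_d=\cT_d^\perp$ gives $\ip F{I_N-uu^\top}=0$. Together these yield $\tr F=\ip F{I_N}=0$.

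\textbf{Main obstacle.} The only nonroutine point is this last step: noticing that the rank-one moment matrix $uu^\top$ lies in $\spanof\{I_N\}\oplus\cT_d$. Once that is seen, the trace condition follows for free, and the other two identities are direct block computations.
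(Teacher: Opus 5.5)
Your proof is correct and follows the paper's argument essentially step for step: the block description of $\Sym(V)^\perp$, the zero-diagonal argument using $q_A=\pm N^{-1/2}\neq0$, and the moment identity $uu^\top=\sum_\Gamma x_\star^\Gamma A_\Gamma$, combined with $u^\top Fu=0$, to force $\tr F=0$. One small tightening: the expansion of $uu^\top$ follows directly from the entrywise identity $(uu^\top)_{A,B}=x_\star^Ax_\star^B=x_\star^{A\triangle B}$, which is how the paper states it, whereas the inner products with the $A_\Gamma$ by themselves only determine the projection of $uu^\top$ onto $\cH_d$.
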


The first identity is elementary. The second is the \emph{constant-diagonal
identity}: a matrix $qv^\top+vq^\top\in\cT_d$ has zero diagonal $2q_Av_A$
with every $q_A=\pm N^{-1/2}$ nonzero, so $v=0$. The third is the
\emph{XOR-class identity}, built on
$uu^\top=\sum_{\Gamma\in\cI_{2d}}x_\star^\Gamma A_\Gamma$; the full proofs
are in \cref{app:identities-proof}. In words: no consistency direction can
lie in $\Sym(V)^\perp$, and inside $\Sym(V)$ the equality row space and
the Gram-map kernel agree.

\subsection{A continuous extension of the interior derivative}

For every $S\in\Sopt$---including boundary certificates---define the linear
maps
\begin{equation}\label{eq:Jextension}
\begin{gathered}
 \Jext_S(H):=h_{00}\,qq^\top+qw^\top+wq^\top,
 \qquad h_{00}:=q^\top Hq,\qquad
 w:=N(NI_N+\sigma S)^{-1}P_VHq,\\
 \Psi_S:=\Prow\Jext_S+\Pnull(\Id-\Jext_S) .
\end{gathered}
\end{equation}
The inverse exists because $NI_N+\sigma S\succeq NI_N$, and
$S\mapsto\Psi_S$ is operator-norm continuous on all of $\Sopt$. For
$S\in\ri\Sopt$ the projection is differentiable at $X_\star-\sigma S$ and
$\Pi_+'(X_\star-\sigma S;\cdot)=\Jext_S$, so
$\delta_\sigma'(X_\star-\sigma S;\cdot)=-\Psi_S$. At a boundary certificate
the true directional derivative acquires exactly one extra nonlinear term,
$Q_K\Pi_+(Q_K^\top HQ_K)Q_K^\top$ with $Q_K$ an orthonormal basis of
$K:=\ker(S)\cap V$ (\cref{app:deriv-id}). So $\Psi_S$ is a continuous
\emph{algebraic extension} of the interior derivative.

\begin{proposition}[Uniform relative-interior modulus]\label{prop:boolean-modulus}
For every $S\in\Sopt$, $\ker(\Psi_S)=\cL$; consequently
\begin{equation}\label{eq:boolean-muri}
 \mu_{\mathrm{ri}}
 \ \ge\ \mu_\star:=\min_{S\in\Sopt}\ \sigma_{\min}\bigl(\Psi_S|_{\cL^\perp}\bigr)
 \ >\ 0 .
\end{equation}
\end{proposition}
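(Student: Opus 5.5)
The plan is in three steps: compute $\ker\Psi_S$ exactly for every $S\in\Sopt$, then get uniformity from continuity and compactness, and finally compare $\mu_\star$ with $\mu_{\mathrm{ri}}$.

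\emph{Step 1: $\ker\Psi_S=\cL$.} Since $\Psi_S=\Prow\Jext_S+\Pnull(\Id-\Jext_S)$ and the two summands lie in the orthogonal spaces $\cR_d$ and $\cT_d$, we have $\Psi_S(H)=0$ if and only if $\Jext_S(H)\in\cT_d$ and $H-\Jext_S(H)\in\cR_d$. By construction $\Jext_S(H)$ lies in $\Sym(V)^\perp=\{qv^\top+vq^\top\}$, by \eqref{eq:Fq-perp}. The constant-diagonal identity \eqref{eq:transverse-intersection} then forces $\Jext_S(H)=0$. I would unpack this condition as follows:
\begin{itemize}
\item the $qq^\top$ coefficient $h_{00}+2q^\top w$ must vanish;
\item the $V$-part of $w$ must vanish;
\item since $w=N(NI_N+\sigma S)^{-1}P_VHq$, and $(NI_N+\sigma S)^{-1}$ preserves $V$ and is invertible there (because $Sq=0$), this gives $P_VHq=0$, hence $w=0$ and $h_{00}=0$.
\end{itemize}
So $Hq=0$, i.e.\ $H\in\Sym(V)$. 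The remaining condition $H\in\cR_d$ gives $H\in\Sym(V)\cap\cR_d=\cL$ by \eqref{eq:boolean-L}. For the converse, $H\in\cL$ has $Hq=0$, so $\Jext_S(H)=0$ and $\Psi_S(H)=\Pnull H=0$. The argument never uses $\ker S$, so it holds for boundary certificates as well.

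\emph{Step 2: uniformity.} Since $\ker\Psi_S=\cL$, the restriction $\Psi_S|_{\cL^\perp}$ is injective, so $\sigma_{\min}(\Psi_S|_{\cL^\perp})>0$ for every $S$. The singular value $\sigma_{\min}$ is Lipschitz in the operator norm. Also $S\mapsto\Psi_S$ is operator-norm continuous on $\Sopt$, because $S\mapsto(NI_N+\sigma S)^{-1}$ is continuous there, the matrix $NI_N+\sigma S$ being bounded below by $NI_N$. Finally, $\Sopt$ is compact by \cref{lem:slack-trace}. Hence the minimum $\mu_\star$ is attained and is positive.

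\emph{Step 3: comparison with $\mu_{\mathrm{ri}}$.} Here $\Xopt=\{X_\star\}$, so every $\bar Z\in\ri\Zopt$ has the form $X_\star-\sigma S$ with $S\in\ri\Sopt$. On these endpoints $\delta_\sigma'(\bar Z;\cdot)=-\Psi_S$, and $\cD=\cL$. Therefore $\normF{\delta_\sigma'(\bar Z;H)}\ge\sigma_{\min}(\Psi_S|_{\cL^\perp})\ge\mu_\star$ for every unit $H\in\cD^\perp$, which proves \eqref{eq:boolean-muri}.

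The main obstacle is the invertibility claim in Step 1 at singular $S$: we need $(NI_N+\sigma S)^{-1}$ to map $V$ injectively into $V$ so that $w=0$ is equivalent to $P_VHq=0$. This follows from $S\succeq0$ and $Sq=0$, and it is precisely what makes the kernel identity, and hence positivity of the modulus, survive at boundary certificates.
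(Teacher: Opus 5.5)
Your proposal is correct and follows the paper's proof essentially step for step. The steps match in order: the orthogonal splitting of $\Psi_S(H)=0$; the constant-diagonal identity forcing $\Jext_S(H)=0$; invertibility of $NI_N+\sigma S$ giving $Hq=0$ and hence $H\in\Sym(V)\cap\cR_d=\cL$; continuity of $S\mapsto\Psi_S$ plus compactness of $\Sopt$; and the identification $\delta_\sigma'(X_\star-\sigma S;\cdot)=-\Psi_S$ on $\ri\Sopt$. Your extra bookkeeping, with the $qq^\top$ coefficient $h_{00}+2q^\top w$ and the $V$-invariance of $(NI_N+\sigma S)^{-1}$, is only an explicit form of the paper's observation that $w\perp q$.
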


The proof (\cref{app:modulus-proof}) is a kernel chain through the two
identities. If $\Psi_S(H)=0$, the two residual components vanish
separately. The constant-diagonal identity
\eqref{eq:transverse-intersection} then forces
$\Jext_S(H)\in\cT_d\cap\Sym(V)^\perp=\{0\}$, and unwinding
\eqref{eq:Jextension} places $H$ in $\Sym(V)\cap\cR_d=\cL$. Continuity
of $S\mapsto\Psi_S$ and compactness of $\Sopt$ turn the pointwise kernel
statement into the uniform bound \eqref{eq:boolean-muri}.

\subsection{The deterministic theorem}

\begin{theorem}[Deterministic Boolean benign degeneracy]\label{thm:boolean-fixed}
Suppose the order-$d$ Boolean moment relaxation has a unique rank-one
optimum and one strictly complementary optimal SOS certificate. Then, for
every fixed $\sigma>0$, the canonical DR residual satisfies the setwise
error bound \eqref{eq:canonical-eb} around its entire fixed-point set
$\Zopt$, and its stall cone equals the tangent cone of $\Zopt$ at every
endpoint. The algorithmic conclusions of \cref{cor:admm,cor:pdhg} hold for
the independent encoding \eqref{eq:Ad}, at every endpoint,
regardless of the rank of the certificate selected by the iteration.
\end{theorem}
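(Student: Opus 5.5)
The plan is to assemble \cref{thm:boolean-fixed} from \cref{thm:rigidity} and \cref{prop:boolean-modulus}; no new estimate is needed beyond checking hypotheses.

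\emph{Verify \cref{ass:two-sided}.} Instantiate \cref{sec:rigidity} with $p=N$, $\cA=\cA_d$ (surjective by \cref{prop:encoding}), $b=e_1$ and $C=C_f$. The primal optimal set is the singleton $\Xopt=\{X_\star\}$, hence compact. The set $\Sopt$ is compact by the trace identity (\cref{lem:slack-trace}). For the anchor pair take $(\Xsc,\Ssc)=(X_\star,\Ssc)$, where $\Ssc$ is the given strictly complementary certificate. Then:
\begin{itemize}
\item $X_\star\Ssc=0$, because $\Ssc u=0$ (every optimal certificate annihilates $u$);
\item $X_\star+\Ssc\succ0$, because $\rank\Ssc=N-1$ on $V=q^\perp$;
\item $\rank X_\star=1<N$ whenever $N\ge2$, which always holds for $d\ge1$.
\end{itemize}
The direction spaces are $\cM=\{0\}$ and $\cD=\cL$.

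\emph{Verify condition (i).} Fix $\sigma>0$. By \cref{prop:boolean-modulus}, for every $S\in\ri\Sopt$ we have $\delta_\sigma'(X_\star-\sigma S;\cdot)=-\Psi_S$. The endpoints in $\ri\Zopt$ are exactly the points $X_\star-\sigma S$ with $S\in\ri\Sopt$, since $\Xopt$ is a point. Also $\cD^\perp=\cL^\perp$. Therefore
\[
\mu_{\mathrm{ri}}=\inf_{S\in\ri\Sopt}\sigma_{\min}\bigl(\Psi_S|_{\cL^\perp}\bigr)\ge\mu_\star>0,
\]
which is condition (i).

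\emph{Invoke \cref{thm:rigidity}.} Condition (i) gives (iii), the setwise error bound \eqref{eq:canonical-eb} around all of $\Zopt=X_\star-\sigma\Sopt$. It also gives (v): $\cC(\bar Z)=\Tan_{\Zopt}(\bar Z)$ on $\rb\Zopt$. At interior endpoints this equality holds automatically, as noted after the theorem, so it holds at every endpoint. Because \cref{thm:rigidity} states that whether the conditions hold is independent of $\sigma$, this works for each fixed $\sigma>0$. Only the constants $c_\sigma,t_\sigma$ depend on $\sigma$.

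\emph{Algorithmic conclusions.} These follow by feeding the error bound into \cref{cor:admm,cor:pdhg}. Their hypothesis is precisely the canonical setwise error bound for the independent encoding \eqref{eq:Ad}. That bound is uniform over the whole of $\Zopt$, so it covers whichever endpoint the iteration converges to, including singular ones; this gives the ``regardless of the rank'' clause.

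The only point requiring care is the identification of $\ri\Zopt$ with $X_\star-\sigma\ri\Sopt$, together with the use of $\sigma_{\min}(\Psi_S|_{\cL^\perp})$ as the infimum over unit $H\in\cD^\perp$. I expect this to be the main obstacle, but only a bookkeeping one: it follows from the product description \eqref{eq:product} and the characterization of $\ri\Zopt$ stated after it. Strict positivity of $\mu_\star$ is supplied by \cref{prop:boolean-modulus} via continuity of $S\mapsto\Psi_S$ and compactness of $\Sopt$.
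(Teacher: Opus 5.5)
Your proposal is correct and follows the paper's proof: surjectivity from \cref{prop:encoding} and compactness from \cref{lem:slack-trace} place the instance under \cref{ass:two-sided}, \cref{prop:boolean-modulus} supplies condition (i) of \cref{thm:rigidity}, and conditions (iii) and (v), fed into \cref{sec:algorithms}, give the conclusions. Your explicit checks of the anchor pair and of the identification $\ri\Zopt=X_\star-\sigma\,\ri\Sopt$ spell out what the paper compresses into ``the instantiation above.''
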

\begin{proof}
\Cref{prop:encoding} gives surjectivity of $\cA_d$ and
\cref{lem:slack-trace} compactness of $\Sopt$; the instantiation
above places us under \cref{ass:two-sided};
\cref{prop:boolean-modulus} verifies condition (i) of \cref{thm:rigidity},
whose condition (iii) is the error bound and condition (v) the exact
linearization. Then apply \cref{sec:algorithms}.
\end{proof}

\begin{remark}[Uniqueness cannot be dropped in the Boolean theorem]
\label{rem:tied-objectives}
\Cref{thm:rigidity} needs no uniqueness, but \cref{thm:boolean-fixed}
does. For $f(x)=1-x_1$ on $\{\pm1\}^2$ at full order, the two minimizers
$(1,\pm1)$ make both $\Xopt$ and $\Sopt$ segments, and a strictly
complementary pair exists. Take the endpoint whose primal factor is the
moment matrix of the point mass at the first minimizer. Along the cross
direction that couples the second minimizer with a nonminimizer, the
residual is of order $2t^2/\sigma$ while the distance to $\Zopt$ is
$\sqrt2\,t$ (\cref{app:tied-objectives}). The dual identity fails there,
and the direction lies in $\cC(\bar Z)\setminus\Tan_{\Zopt}(\bar Z)$. The
same happens for every nonconstant full-order objective with two or more
minimizers. Unique rank-one exactness rules out such directions.
\end{remark}

%% file: sections/algorithms.tex
\section{Consequences for Splitting Methods}
\label{sec:algorithms}

This section converts the error bound \eqref{eq:canonical-eb} into
convergence rates for exact ADMM, exact sGS--ADMM, and fixed-step PDHG;
details are in \cref{app:algorithms}.

\paragraph{ADMM and exact sGS--ADMM share one shadow map.}
Exact ADMM on the dual SDP with penalty $\sigma>0$ and multiplier step
$\tau>0$, and its exact symmetric Gauss--Seidel
variant~\cite{gabay76cma-dual-algorithm-nonlinear-variation,chen17mp-sgsadmm},
both reduce to one common recursion in the shadow coordinate
$Z^k:=\Pnull X^k+X_b-\sigma\Prow S^k-\sigma\Pnull C$. The recursion is
$Z^{k+1}=T_{\sigma,\tau}(Z^k)$ with
$T_{\sigma,\tau}-\Id=-\mathsf D_\tau(\rpar+\rper_\sigma)$ and
$\mathsf D_\tau:=\Prow+\tau\Pnull$ (established
in~\cite{kang26arxiv-afterimage}; re-derived in \cref{app:shadow-map}).
The multiplier step only rescales the two orthogonal
residual components, so, with $\delta_{\sigma,\tau}:=T_{\sigma,\tau}-\Id$,
\begin{equation}\label{eq:admm-eb}
 \normF{\delta_{\sigma,\tau}(Z)}
 \ \ge\ \min\{1,\tau\}\,\normF{\delta_\sigma(Z)}
 \ \ge\ \min\{1,\tau\}\,c_\sigma\,\dist_F(Z,\Zopt)
\end{equation}
throughout the tube of \eqref{eq:canonical-eb}: the setwise error bound
transfers verbatim.
For $0<\tau<2$, $T_{\sigma,\tau}$ is $\max\{1,\tau\}/2$-averaged in the
parameter-dependent metric $\mathsf G_\tau:=\Prow+\tau^{-1}\Pnull$, which
is equivalent to the Frobenius metric (\cref{prop:admm-metric},
\cref{app:shadow-rates}; cf.~\cite{kang26arxiv-afterimage}).

\paragraph{PDHG admits an endpoint chart.}
Fixed-step PDHG~\cite{chambolle11jmiv-pdhg} on \eqref{eq:sdp-pair}, with
state $w=(X,\lambda)$ and steps $\eta_x,\eta_y>0$, iterates
$X^+=\Pi_+(X-\eta_x(C-\cA^*\lambda))$,
$\lambda^+=\lambda+\eta_y(b-\cA(2X^+-X))$. Consider the affine change of
variables $\Phi(X,\lambda)=(Z,r)$ with $Z:=X-\eta_x(C-\cA^*\lambda)$ and
$r:=\cA X-b$. It is a bijection sending the fixed-point set
$\Wopt:=\Fix T_{\mathrm P}$ of the PDHG map $T_{\mathrm P}$ onto
$\Zopt\times\{0\}$, with $\Zopt$ the
canonical fixed-point set at parameter $\sigma=\eta_x$. In this chart the
PDHG residual \emph{factors through the
canonical one}: there is an explicit linear isomorphism
$\mathsf L_{\eta_x,\eta_y}:\cR\times\cN\times\R^m\to\Sym^p\times\R^m$ with
\begin{equation}\label{eq:pdhg-factor}
 \Phi T_{\mathrm P}\Phi^{-1}(Z,r)-(Z,r)
 =\mathsf L_{\eta_x,\eta_y}\bigl(\rpar(Z),\,\rper_{\eta_x}(Z),\,r\bigr)
\end{equation}
(\cref{prop:pdhg-transfer}). Invertibility of $\mathsf L_{\eta_x,\eta_y}$
and of the chart converts \eqref{eq:canonical-eb} into a native setwise
error bound for PDHG, and $T_{\mathrm P}$ is firmly nonexpansive in the
standard preconditioned metric whenever
$\eta_x\eta_y\normop{\cA}^2<1$~\cite{chambolle11jmiv-pdhg,oconnor20mp-pdhg-drs}
(\cref{prop:pdhg-metric}).

\paragraph{From error bound to rates.}
The passage from a setwise error bound to convergence rates is standard
averaged-operator
theory~\cite{bauschke17springer-convex-analysis-hilbert-spaces}. For an
$\alpha$-averaged map in a metric equivalent to the Euclidean one, a
setwise error bound on a neighborhood of the fixed-point set makes every
orbit converge to a fixed point. The distance then contracts eventually
Q-linearly in that metric, and the iterates converge R-linearly
(\cref{lem:averaged-eb}, stated and proved quantitatively in
\cref{app:rate-lemma}).

\begin{corollary}[ADMM and sGS--ADMM]\label{cor:admm}
Under the hypotheses of \cref{thm:boolean-fixed} (or, abstractly,
\cref{ass:two-sided} plus any condition of \cref{thm:rigidity}), for every fixed $\sigma>0$ and $0<\tau<2$, the shadow
orbits of exact ADMM and exact sGS--ADMM converge to an endpoint of
$\Zopt$. The distance contracts eventually Q-linearly in the
$\mathsf G_\tau$ metric, and the shadow and the raw primal--dual iterates
converge R-linearly (\cref{app:shadow-rates}).
\end{corollary}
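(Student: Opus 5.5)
The plan is to verify the three hypotheses of the averaged-operator rate lemma (\cref{lem:averaged-eb}) for the shadow map $T_{\sigma,\tau}$, and then to translate convergence of the shadow orbit back to the raw ADMM and sGS--ADMM iterates. The three hypotheses are:
\begin{enumerate}[label=\textup{(\alph*)},leftmargin=2.2em]
\item averagedness in a metric equivalent to the Frobenius one;
\item a setwise error bound on a tube around the fixed-point set;
\item the identification $\Fix T_{\sigma,\tau}=\Zopt$.
\end{enumerate}

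\emph{Step 1 (fixed points).} Since $\mathsf D_\tau=\Prow+\tau\Pnull$ is invertible for $\tau>0$, we have $T_{\sigma,\tau}(Z)=Z$ iff $\rpar(Z)+\rper_\sigma(Z)=0$. Both components are orthogonal, so this holds iff $\delta_\sigma(Z)=0$, i.e.\ $Z\in\Fix T_{\sigma,1}=\Zopt$ by \eqref{eq:product}.

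\emph{Step 2 (error bound).} Under the hypotheses of \cref{thm:boolean-fixed}, or abstractly under \cref{ass:two-sided} plus any condition of \cref{thm:rigidity}, condition (iii) holds. It gives $c_\sigma,t_\sigma>0$ with \eqref{eq:canonical-eb}. By orthogonality,
\[
\normF{\mathsf D_\tau v}^2=\normF{\Prow v}^2+\tau^2\normF{\Pnull v}^2\ge\min\{1,\tau\}^2\normF{v}^2 ,
\]
which yields \eqref{eq:admm-eb} on the same tube. Because $\mathsf G_\tau$ is sandwiched between $\min\{1,\tau^{-1}\}\Id$ and $\max\{1,\tau^{-1}\}\Id$, both the residual norm and the distance change only by fixed constants in that metric. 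The error bound therefore also holds in $\mathsf G_\tau$-norms on a slightly shrunken tube.

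\emph{Step 3 (averagedness and rates).} Invoke \cref{prop:admm-metric}: for $0<\tau<2$, $T_{\sigma,\tau}$ is $\max\{1,\tau\}/2$-averaged in $\mathsf G_\tau$. \Cref{lem:averaged-eb} then gives Fejér monotonicity and convergence of every orbit to some $\bar Z\in\Zopt$. Once the orbit enters the tube, which it does since the $\mathsf G_\tau$-distance to $\Zopt$ tends to zero by asymptotic regularity, the error bound gives a Q-linear contraction of $\dist_{\mathsf G_\tau}(Z^k,\Zopt)$. A standard Fejér argument then gives R-linear convergence $Z^k\to\bar Z$. No regularity at the particular limit $\bar Z$ is used, so the limit may be singular.

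\emph{Step 4 (raw iterates).} Recover $X^{k+1}=\Pi_+(Z^k)$ and $S^{k+1}=-\sigma^{-1}\Pi_-(Z^k)$, up to the affine bookkeeping in \cref{app:shadow-map}, together with $\lambda$ from $\cA^*\lambda=C-S$ and injectivity of $\cA^*$. Each recovery map is Lipschitz: the projections are $1$-Lipschitz and $(\cA\cA^*)^{-1}\cA$ is bounded. R-linear convergence of the shadow therefore transfers to the raw primal--dual iterates, with limits the two factors of $\bar Z$.

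The main obstacle is Step 4, specifically checking that the raw ADMM and sGS--ADMM iterates are indeed Lipschitz images of consecutive shadow iterates. The sGS variant has extra intermediate multiplier updates, so I would rely on the shadow-map derivation of \cref{app:shadow-map} to show that each intermediate quantity is an explicit affine-plus-projection function of $Z^k$.
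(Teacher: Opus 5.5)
\textbf{Steps 1--3 are sound.} They coincide with the paper's argument: the paper also just combines \eqref{eq:admm-eb}, \cref{prop:admm-metric}, and \cref{lem:averaged-eb}. This gives Q-linear contraction in $\mathsf G_\tau$ and R-linear convergence of the shadow to some $\bar Z=\bar X-\sigma\bar S\in\Zopt$.

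\textbf{The gap is in Step 4.} The identity $X^{k+1}=\Pi_+(Z^k)$ holds only at $\tau=1$. For general $\tau\in(0,2)$ the primal iterates behave differently:
\begin{itemize}
\item ADMM obeys the filter $X^{k+1}=(1-\tau)X^k+\tau\,\Pi_+(Z^k)$ of \eqref{eq:admm-filter}.
\item For sGS--ADMM, \eqref{eq:sgs-X} gives $\Prow X^{k+1}-X_b=(1-\tau)(\Prow X^k-X_b)$.
\end{itemize}
The shadow records only $(\Pnull X^k,\Prow S^k)$. So $\Prow X^k$ carries memory of $X^0$ and is not a Lipschitz function of $Z^k$, or of any finite window of shadow iterates. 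Your plan for the ``main obstacle'' (writing every raw quantity as an affine-plus-projection function of $Z^k$) therefore cannot succeed.

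The same problem affects the multiplier. $\cA^*\lambda^{k+1}=C-S^{k+1}$ is not how $\lambda^{k+1}$ is defined, since dual feasibility holds only in the limit. Instead $\lambda^{k+1}$ is given by \eqref{eq:raw-admm-y}, which depends on $\cA X^k$ — exactly the component the shadow does not record.

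\textbf{The missing ingredient is a stable-recurrence argument.} If $\normF{Z^k-\bar Z}\le c\rho^k$ is the R-linear shadow bound, then nonexpansiveness of $\Pi_+$ and $\Pi_+(\bar Z)=\bar X$ give $\normF{\Pi_+(Z^k)-\bar X}\le c\rho^k$. Unrolling $X^{k+1}-\bar X=(1-\tau)(X^k-\bar X)+\tau(\Pi_+(Z^k)-\bar X)$ yields
\[
 \normF{X^k-\bar X}\le|1-\tau|^k\normF{X^0-\bar X}+\tau c\sum_{j<k}|1-\tau|^{k-1-j}\rho^j .
\]
This is $O(\widehat\rho^{\,k})$ for every $\widehat\rho\in(\max\{\rho,|1-\tau|\},1)$; the slack absorbs the $k\rho^k$ term when $\rho=|1-\tau|$. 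Note that this uses $\tau<2$ a second time, now to get $|1-\tau|<1$.

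For sGS--ADMM, $\Prow X^k\to X_b$ by the exact contraction above, and $\Pnull X^k=\Pnull Z^k+\sigma\Pnull C$ is read off the shadow. Only after the primal iterates are handled do $\lambda^{k+1/2}$ and $\lambda^{k+1}$ inherit R-linear convergence: they are affine in $(X^k,S^k)$ or $(X^k,S^{k+1})$ through the equality solves \eqref{eq:raw-admm-y}, and their limit is the unique multiplier of $\bar S$. With this repair your proof matches the paper's.
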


\begin{corollary}[PDHG]\label{cor:pdhg}
Under the same hypotheses with $\sigma=\eta_x$ and
$\eta_x\eta_y\normop{\cA}^2<1$, every PDHG orbit converges to a KKT pair;
the distance to $\Wopt$ contracts eventually Q-linearly in the
preconditioned metric, and the iterates and the slack
$S^k=C-\cA^*\lambda^k$ converge R-linearly.
\end{corollary}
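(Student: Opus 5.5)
The statement is Corollary~\ref{cor:pdhg}, so we prove it by transferring the canonical error bound through the endpoint chart and then applying \cref{lem:averaged-eb}.

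\textbf{Step 1: setwise error bound in the chart.} Under the hypotheses, \cref{thm:rigidity}(iii) holds at $\sigma=\eta_x$. This gives $c,t>0$ with $\normF{\delta_{\eta_x}(Z)}\ge c\,\dist_F(Z,\Zopt)$ whenever $\dist_F(Z,\Zopt)\le t$. Since $\rpar(Z)\in\cR$ and $\rper_{\eta_x}(Z)\in\cN$ are orthogonal, $\normF{(\rpar,\rper_{\eta_x})}=\normF{\delta_{\eta_x}}$. By \eqref{eq:pdhg-factor} and invertibility of $\mathsf L:=\mathsf L_{\eta_x,\eta_y}$,
\[
\bigl\|\Phi T_{\mathrm P}\Phi^{-1}(Z,r)-(Z,r)\bigr\|\ \ge\ \|\mathsf L^{-1}\|^{-1}\bigl(\normF{\delta_{\eta_x}(Z)}^2+\|r\|^2\bigr)^{1/2}.
\]
Because $\Phi(\Wopt)=\Zopt\times\{0\}$, we have $\dist((Z,r),\Zopt\times\{0\})^2=\dist_F(Z,\Zopt)^2+\|r\|^2$. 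Hence the chart residual is bounded below by $\|\mathsf L^{-1}\|^{-1}\min\{c,1\}$ times the distance to $\Zopt\times\{0\}$ on the tube of radius $t$.

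\textbf{Step 2: pull back to PDHG coordinates.} $\Phi$ is an affine bijection with linear part $D\Phi$. Residuals therefore transform by $T_{\mathrm P}(w)-w=D\Phi^{-1}\bigl(\Phi T_{\mathrm P}\Phi^{-1}(\Phi w)-\Phi w\bigr)$. Distances satisfy $\dist(\Phi w,\Phi\Wopt)\ge\|D\Phi^{-1}\|^{-1}\dist(w,\Wopt)$. Combining these with Step 1 gives a native bound $\|T_{\mathrm P}(w)-w\|\ge c'\dist(w,\Wopt)$ on a tube of radius $t'=t/\|D\Phi\|$. All constants are equivalent across the Euclidean, chart, and preconditioned metrics, since these metrics are equivalent.

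\textbf{Step 3: rates.} By \cref{prop:pdhg-metric}, $T_{\mathrm P}$ is firmly nonexpansive, hence $\tfrac12$-averaged, in the preconditioned metric when $\eta_x\eta_y\normop{\cA}^2<1$. We then apply \cref{lem:averaged-eb}. Every orbit converges to some $w_\infty\in\Wopt$ and eventually enters the tube. Once inside, the distance to $\Wopt$ contracts Q-linearly in the preconditioned metric, and the iterates converge R-linearly. Fixed points of $T_{\mathrm P}$ are KKT pairs $(X,\lambda)$. The slack $S^k=C-\cA^*\lambda^k$ is a linear image of $\lambda^k$, so it inherits the R-linear rate.

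The main obstacle is Step 1, specifically the bookkeeping that makes the fixed-point correspondence exact. We must verify that $\Phi$ maps $\Fix T_{\mathrm P}$ onto exactly $\Zopt\times\{0\}$ at parameter $\sigma=\eta_x$, so that the distance to $\Wopt$ matches the product distance. This is supplied by \cref{prop:pdhg-transfer}, which we take as given.
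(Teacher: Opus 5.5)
Your proposal is correct and follows the paper's route. The paper's proof combines \cref{prop:pdhg-transfer}, \cref{prop:pdhg-metric}, and \cref{lem:averaged-eb} with $\alpha=1/2$, then reads off the slack rate from $S^k=C-\cA^*\lambda^k$. Your Steps~1--2 re-derive inline the native error bound that the paper already packages in the final clause of \cref{prop:pdhg-transfer}, with the same constants $c_{\mathrm P}=(a_\Phi/b_\Phi)\,\ell\min\{c_{\eta_x},1\}$ and $t_{\mathrm P}=t_{\eta_x}/b_\Phi$.
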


The convergence is \emph{eventual}: it begins after an entrance time that
we do not bound (\cref{app:rate-lemma}).

%% file: sections/genericity.tex
\section{Generic Benign Degeneracy}
\label{sec:genericity}

It remains to show that the two hypotheses of \cref{thm:boolean-fixed}
occur broadly: outside one lower-dimensional semialgebraic set of
objectives, they hold \emph{simultaneously at every uniquely rank-one
exact order} (proofs in \cref{app:genericity}).

\paragraph{Full order: exactness and strict complementarity for free.}
Let $\cP_n:=\spanof\{x^T:T\subseteq[n]\}$ be the $2^n$-dimensional space of
square-free objectives, and $\cP_{\le2d}$ its subspace of degree at most
$2d$.

\begin{proposition}[Full-order structure]\label{prop:eventual}
At $d=n$ the moment body $\cX_n$ is the affine image, with unique
coefficients, of the probability simplex on $\{\pm1\}^n$:
$X=\sum_{a\in\{\pm1\}^n}\pi_a\,b_n(a)b_n(a)^\top$ with $\pi_a\ge0$,
$\sum_a\pi_a=1$. Consequently every uniquely minimized $f$ is uniquely
rank-one exact at $d=n$, so $d_\star(f)\le n$; and unique rank-one exactness
persists upward from any order at which it holds. Moreover, for every
uniquely minimized $f$ the full-order relaxation admits a strictly
complementary optimal certificate of rank $2^n-1$ (\cref{app:full-order}).
\end{proposition}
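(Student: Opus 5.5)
The plan is to prove the four claims of \cref{prop:eventual} in order: the simplex parametrization of $\cX_n$, unique rank-one exactness at $d=n$, upward persistence, and the existence of a strictly complementary certificate.

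\textbf{Simplex parametrization.} At $d=n$ we have $\cI_n=\cI_{2n}=2^{[n]}$, so $N=2^n$. The character matrix $B:=[b_n(a)]_{a\in\{\pm1\}^n}$ is a $\pm1$ Hadamard matrix with $B^\top B=BB^\top=2^nI$. Given $X=M_n(y)\in\cX_n$, we define $\pi_a:=2^{-n}\sum_\Gamma y_\Gamma a^\Gamma$, which is Fourier inversion on the group $\{\pm1\}^n$. Then $y_\Gamma=\sum_a\pi_a a^\Gamma$ and $\sum_a\pi_a=y_\emptyset=1$. Because $b_n(a)b_n(a)^\top=\sum_\Gamma a^\Gamma A_\Gamma$, we get $X=\sum_a\pi_a b_n(a)b_n(a)^\top$. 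Equivalently, $X=B\,\diag(\pi)B^\top$: the matrix $M_n(y)$ is a group-convolution matrix, so it is diagonalized by the characters with eigenvalues $2^n\pi_a$. Hence $X\succeq0$ if and only if $\pi\ge0$. Uniqueness of the coefficients follows from invertibility of $B$. This gives an affine bijection between $\cX_n$ and the simplex.

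\textbf{Exactness at $d=n$ and upward persistence.} Under this bijection the objective becomes $\ip{C_f}{X}=\sum_a\pi_a f(a)$, which is linear on the simplex. If $x_\star$ is the unique minimizer, the optimum is attained only at the vertex $\pi=e_{x_\star}$, giving the unique optimizer $b_n(x_\star)b_n(x_\star)^\top$. So $d_\star(f)\le n$. For persistence, suppose $d_0\le d$. The truncation map $M_d(y)\mapsto M_{d_0}(y|_{\cI_{2d_0}})$ takes principal submatrices, so it maps $\cX_d$ into $\cX_{d_0}$ and preserves the objective. Since $\OPT(f)\ge p_d\ge p_{d_0}=\OPT(f)$, every optimizer at order $d$ truncates to the unique rank-one optimizer at order $d_0$. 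In particular $y_\Gamma=x_\star^\Gamma$ for $|\Gamma|\le2$, so $y_{\{i\}}=x_{\star,i}$. It remains to show $y_\Gamma=x_\star^\Gamma$ for all $\Gamma$. Let $\Gamma=\Gamma_1\triangle\Gamma_2$ with $|\Gamma_1|,|\Gamma_2|\le d$. The rows indexed by $\emptyset$ and $\{i\}$ give a $2\times2$ principal minor with off-diagonal entry $x_{\star,i}$, which is singular; combined with PSD-ness, this gives the row relation $y_{\{i\}\triangle A}=x_{\star,i}\,y_A$. Iterating this relation yields $y_\Gamma=x_\star^\Gamma$, so $X=b_d(x_\star)b_d(x_\star)^\top$.

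\textbf{Strictly complementary certificate.} Use the diagonalization from the first step. Set $S:=B\,\diag(s)B^\top$ with $s_a:=2^{-n}(f(a)-f_\star)\ge0$. Then $b_n(x)^\top S b_n(x)=\sum_a s_a (b_n(a)^\top b_n(x))^2=2^{2n}s_x=2^n(f(x)-f_\star)$ on the cube. Rescaling $s$ by $2^{-n}$ as needed, this represents $f-f_\star$ modulo $I_{\mathrm B}$, because at $d=n$ a square-free polynomial is determined by its values on the cube. The eigenvalues of $S$ are proportional to $f(a)-f_\star$, which are positive for $a\ne x_\star$ and zero at $x_\star$. Hence $\rank S=2^n-1$ and $X_\star+S\succ0$.

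The main obstacle I expect is the persistence step, specifically justifying the row-propagation identity from PSD-ness and the vanishing minor when $d>d_0$ and the index sets are truncated. I would do this carefully with the kernel-vector argument: the vector $e_\emptyset-x_{\star,i}e_{\{i\}}$ lies in $\ker M_d(y)$ because its quadratic form vanishes under PSD-ness.
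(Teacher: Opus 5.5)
Your simplex and certificate steps are correct and follow the paper closely. Your Fourier-inversion weights $\pi_a=2^{-n}\sum_\Gamma y_\Gamma a^\Gamma$ are exactly the paper's $L_y(\ell_a)$ with $\ell_a=2^{-n}\prod_i(1+a_ix_i)$. Your diagonalization $M_n(y)=B\,\diag(\pi)B^\top$ replaces the paper's ``$L_y(\ell_a^2)\ge0$ plus interpolation'' and gives $X\succeq0\iff\pi\ge0$ in one stroke. After the $2^{-n}$ rescaling you flag, your $S$ is the paper's $S_{\mathrm{full}}=\sum_{a\ne x_\star}g(a)c_ac_a^\top$.

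The gap is in the persistence step. The kernel vector $v=e_\emptyset-x_{\star,i}e_{\{i\}}$ gives $(M_d(y)v)_A=y_A-x_{\star,i}y_{A\triangle\{i\}}=0$ only for row indices $A\in\cI_d$. Every moment in these relations therefore has index of size at most $d+1$, so iterating them determines $y_\Gamma$ only for $|\Gamma|\le d+1$. For $d\ge2$ the order-$d$ moment matrix also contains moments with $d+2\le|\Gamma|\le\min\{2d,n\}$. An example is $y_{\{1,2,3,4\}}$ at $n=4$, $d=2$, which sits in the entry indexed by $\{1,2\},\{3,4\}$. These moments never appear in your relations. Writing $\Gamma=\Gamma_1\triangle\Gamma_2$ does not help, because you only have relations along rows $\emptyset$ and $\{i\}$. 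So ``iterating this relation yields $y_\Gamma=x_\star^\Gamma$'' does not follow as stated.

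The repair is short. Take any $A$ with $A,A\triangle\{i\}\in\cI_d$. The principal $2\times2$ submatrix on rows $A$ and $A\triangle\{i\}$ is again $\bigl(\begin{smallmatrix}1&x_{\star,i}\\x_{\star,i}&1\end{smallmatrix}\bigr)$, since its off-diagonal entry is $y_{A\triangle(A\triangle\{i\})}=y_{\{i\}}$. Hence $e_{A\triangle\{i\}}-x_{\star,i}e_A\in\ker M_d(y)$, that is, row $A\triangle\{i\}$ is $x_{\star,i}$ times row $A$. Now add the elements of $A$ one at a time starting from $\emptyset$; all intermediate sets stay in $\cI_d$. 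This shows row $A$ is $x_\star^A$ times row $\emptyset$, which gives $M_d(y)=b_d(x_\star)b_d(x_\star)^\top$.

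This is the paper's argument in kernel form. The paper takes unit Gram vectors $g_A$ with $\ip{g_A}{g_B}=y_{A\triangle B}$ and uses equality in Cauchy--Schwarz to get $g_{A\triangle\{i\}}=x_{\star,i}g_A$, hence $g_A=x_\star^Ag_\emptyset$.

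Alternatively, you can keep your relations, which do give $y_A=x_\star^A$ for all $|A|\le d$, so the first column is $u=b_d(x_\star)$. Then $e_\emptyset-x_\star^Ae_A\in\ker M_d(y)$ for every $A\in\cI_d$ by the same minor argument. This forces $(M_d(y))_{B,A}=x_\star^Ax_\star^B$ for all $A,B$.
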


\paragraph{Fixed-order cells and one Gram construction.}
Fix a candidate optimizer $a\in\{\pm1\}^n$ and an order $d$, and let
$u=b_d(a)$. Inside the normalized coefficient space
$\cQ(a,d):=\{g\in\cP_{\le2d}:g(a)=0\}$, the \emph{exact cell} $\cE(a,d)$
collects the $g$ for which $(\mathrm P_d)$ has the unique optimizer
$uu^\top$. The \emph{strictly complementary cell}
$\cE_{\mathrm{sc}}(a,d)\subseteq\cE(a,d)$ collects the $g$ represented by
some PSD Gram matrix $S$ with $\ker(S)=\spanof\{u\}$. For each nonempty
$\Gamma\in\cI_{2d}$, split
$\Gamma=P_\Gamma\dot\cup Q_\Gamma$ into parts of size at most $d$ (with
$P_\Gamma=\Gamma$, $Q_\Gamma=\emptyset$ when $|\Gamma|\le d$) and set
\begin{equation}\label{eq:wGamma}
 w_\Gamma:=e_{P_\Gamma}-a^\Gamma e_{Q_\Gamma},
 \qquad
 S_{\mathrm{int}}:=\sum_{\emptyset\ne A\in\cI_d}w_Aw_A^\top .
\end{equation}
Each $w_\Gamma$ is orthogonal to $u$ and its Gram matrix represents
$2(1-a^\Gamma x^\Gamma)$; under this splitting convention, the $N-1$ vectors
$w_A=e_A-a^Ae_\emptyset$ ($\emptyset\ne A\in\cI_d$) form a basis of
$u^\perp$, so $S_{\mathrm{int}}\succeq0$ is an \emph{interior certificate}
with kernel exactly $\spanof\{u\}$.

\begin{theorem}[Fixed-order SC genericity]\label{thm:cell-generic}
$\cE_{\mathrm{sc}}(a,d)$ is open in $\cQ(a,d)$ and dense relative to
$\cE(a,d)$; both cells are semialgebraic; and
$\dim\bigl(\cE(a,d)\setminus\cE_{\mathrm{sc}}(a,d)\bigr)<\dim\cE(a,d)$.
\end{theorem}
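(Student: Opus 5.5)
The plan is to describe both cells by explicit polynomial conditions on $g\in\cQ(a,d)$, read off semialgebraicity and openness from that description, and then obtain density and the dimension drop from a single perturbation toward the interior certificate $S_{\mathrm{int}}$ of \eqref{eq:wGamma}.

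\textbf{Semialgebraicity.} I would write the strictly complementary cell as the projection
\[
\cE_{\mathrm{sc}}(a,d)=\Bigl\{g:\ \exists S\succeq0,\ Su=0,\ \rank S=N-1,\ b_d^\top Sb_d\equiv g\Bigr\}.
\]
This is the projection of a semialgebraic set, with PSD-ness and rank expressed by minors, so Tarski--Seidenberg gives semialgebraicity. For $\cE(a,d)$ I would use the condition
\[
\forall X\in\cX_d:\quad \ip{C_g}{X}\ge0,\quad\text{with equality only at }X=uu^\top.
\]
This is first-order over the reals, so $\cE(a,d)$ is semialgebraic too. Every $g\in\cE_{\mathrm{sc}}$ lies in $\cE$. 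Indeed, $g(a)=0$ and the certificate show that $p_d(g)=0$ is attained at $uu^\top$. Any optimal $X$ satisfies $\ip XS=0$, so $\ran X\subseteq\ker S=\spanof\{u\}$. Together with $X_{\emptyset\emptyset}=1$ this forces $X=uu^\top$.

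\textbf{Openness.} Let $S$ certify $g\in\cE_{\mathrm{sc}}$, so that $S$ is positive definite on $u^\perp$. The Gram map $S\mapsto b_d^\top Sb_d \bmod I_{\mathrm B}$, restricted to $\Sym(u^\perp)$, has image all of $\cQ(a,d)$. To see this, note that each $x^\Gamma$ with $\Gamma\ne\emptyset$ is reached by combining $w_\Gamma w_\Gamma^\top$ with $S_{\mathrm{int}}$-type terms: $w_\Gamma w_\Gamma^\top$ represents $2(1-a^\Gamma x^\Gamma)$, and the constant is fixed by $g(a)=0$. A linear right inverse $R$ then exists. For $g'$ near $g$, the matrix $S+R(g'-g)$ stays PSD with kernel $\spanof\{u\}$, because small perturbations preserve positive definiteness on $u^\perp$. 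Hence $\cE_{\mathrm{sc}}(a,d)$ is open.

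\textbf{Density and dimension drop.} Take $g\in\cE(a,d)$ and set $g_\eps:=g+\eps h$, where $h$ is the polynomial represented by $S_{\mathrm{int}}$. The key step is to show $g_\eps\in\cE_{\mathrm{sc}}$ for small $\eps>0$, which requires an optimal certificate $S_g\succeq0$ for $g$ with $S_gu=0$. I would obtain it from strong duality: the identity is strictly feasible and $\cX_d$ is compact, so the dual optimum is attained, and $u^\top S_gu=g(a)=0$. Then $S_g+\eps S_{\mathrm{int}}$ is PSD with kernel exactly $\spanof\{u\}$, since $S_{\mathrm{int}}$ is definite on $u^\perp$. So $g_\eps\in\cE_{\mathrm{sc}}$ for every $\eps>0$, and $g_\eps\to g$, which proves density. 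Note that this step uses only attainment of the dual, which is why I expect it to be routine.

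\textbf{Main obstacle.} The dimension inequality is where I expect the real work. The complement $\cE\setminus\cE_{\mathrm{sc}}$ is semialgebraic and has empty interior relative to $\cE$, because $\cE_{\mathrm{sc}}$ is open and dense in $\cE$. I then need the general fact that a semialgebraic subset with empty interior has strictly smaller dimension than the ambient set. That fact holds when the ambient set is open, but $\cE$ need not be open, so I must handle the case where $\cE$ is lower-dimensional than $\cQ(a,d)$. My first approach is to decompose $\cE$ into finitely many cells, each homeomorphic to an open cube. On any top-dimensional cell, $\cE_{\mathrm{sc}}\cap$cell is relatively open and dense in that cell, so the complement within the cell is nowhere dense semialgebraic and therefore of lower dimension. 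Lower-dimensional cells contribute less than $\dim\cE$ automatically. Combining the cells gives $\dim(\cE\setminus\cE_{\mathrm{sc}})<\dim\cE$. Alternatively, I would show that $\cE$ has nonempty interior in $\cQ(a,d)$ near $\cE_{\mathrm{sc}}$ and that $\dim\cE=\dim\cQ$, which would reduce everything to the open case.
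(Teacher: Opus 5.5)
Your proposal is correct. The openness argument, the density argument, the inclusion $\cE_{\mathrm{sc}}(a,d)\subseteq\cE(a,d)$, and the Tarski--Seidenberg descriptions are essentially the paper's. Openness uses a linear right inverse built from the $w_\Gamma w_\Gamma^\top$. Density uses a dual-attained certificate $S_g$ with $S_gu=0$, perturbed by $\eps S_{\mathrm{int}}$. Two small tightenings:
\begin{itemize}
\item Surjectivity onto $\cQ(a,d)$ needs no ``$S_{\mathrm{int}}$-type terms''. Since $g(a)=0$, one has $g=\sum_{\Gamma\ne\emptyset}g_\Gamma(x^\Gamma-a^\Gamma)=-\tfrac12\sum_{\Gamma\ne\emptyset}g_\Gamma a^\Gamma\cdot2(1-a^\Gamma x^\Gamma)$. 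So the matrices $w_\Gamma w_\Gamma^\top$ alone give the section into $\Sym(u^\perp)$.
\item In the density step, say explicitly that the dual optimal value is $0$ because $uu^\top$ is optimal with value $g(a)=0$. That is why $S_g$ represents $g$ itself.
\end{itemize}

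The dimension step is where you differ from the paper, and the ``main obstacle'' you flag is not really one.
\begin{itemize}
\item $S_{\mathrm{int}}$ certifies your $h$, so $\cE_{\mathrm{sc}}(a,d)$ is a nonempty open subset of $\cQ(a,d)$. Hence $\dim\cE(a,d)=\dim\cQ(a,d)$ already follows from what you proved.
\item An open ball of $\cQ(a,d)$ contained in $\cE\setminus\cE_{\mathrm{sc}}$ would contradict density. So this difference has empty interior in $\cQ(a,d)$, hence dimension below $\dim\cQ(a,d)=\dim\cE(a,d)$.
\end{itemize}
Your alternative route is therefore complete and uses only the elementary fact that a semialgebraic subset of $\R^q$ has dimension $q$ if and only if it has interior.

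Your first route, via cell decomposition, asserts without proof that $\cE_{\mathrm{sc}}$ is dense in every top-dimensional cell of $\cE$. Density in $\cE$ does not give this by itself. You need either that those cells are open in $\cQ(a,d)$, which holds here by the full-dimensionality just noted, or the frontier inequality.

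The paper avoids all of this. From $\cE_{\mathrm{sc}}\subseteq\cE\subseteq\cl\cE_{\mathrm{sc}}$ it gets $\cE\setminus\cE_{\mathrm{sc}}\subseteq\cl\cE_{\mathrm{sc}}\setminus\cE_{\mathrm{sc}}$. The frontier theorem $\dim(\cl A\setminus A)<\dim A$ then finishes. This needs no information about $\dim\cQ(a,d)$ and would still work if $\cE$ were lower-dimensional. Your empty-interior argument is more elementary, but it works only because $\cE(a,d)$ happens to be full-dimensional here.
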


The proof (\cref{app:cells-proof}) gets openness from the Gram
construction \eqref{eq:wGamma}, density from $S+\eps S_{\mathrm{int}}$, and the rest
from Tarski--Seidenberg and the frontier theorem~\cite{bochnak98book-rag}.

\paragraph{Ambient genericity.}
To pass to $\cP_n$, remove two bad families. The tie
hyperplanes $\{f:f(a)=f(b)\}$, $a\ne b$, contain every objective with a
nonunique minimizer. For each $(a,d)$, the objectives with
$f-f(a)\in\cE(a,d)\setminus\cE_{\mathrm{sc}}(a,d)$ have closure dimension
below $2^n$ by \cref{thm:cell-generic}. This proves:

\begin{theorem}[Generic benign degeneracy]\label{thm:main}
There exists an open dense semialgebraic set $\Ggen\subseteq\cP_n$ (so its
complement has measure zero) such that every $f\in\Ggen$ has a
unique Boolean minimizer, $d_\star(f)\le n$ is well defined, and at every
order $d_\star(f)\le d\le n$ the moment relaxation has the unique optimum
$b_d(x_\star)b_d(x_\star)^\top$ and an optimal certificate of rank $N_d-1$.
Consequently, for every such $f$, every such order $d$, and every fixed
admissible parameter choice, the conclusions of
\cref{thm:boolean-fixed,cor:admm,cor:pdhg} hold: the three splitting
methods converge eventually linearly from every initialization.
\end{theorem}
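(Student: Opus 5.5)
The plan is to build $\Ggen$ as the complement of a finite union of closed lower-dimensional semialgebraic sets, and then invoke \cref{thm:boolean-fixed} order by order. First I remove the tie hyperplanes $H_{ab}:=\{f\in\cP_n:f(a)=f(b)\}$ for $a\ne b$ in $\{\pm1\}^n$. These are finitely many proper linear subspaces of $\cP_n$; proper because the evaluation functionals at distinct cube points are linearly independent on $\cP_n$. Off their union, $f$ has a unique minimizer $x_\star$. By \cref{prop:eventual}, $d_\star(f)\le n$ is then defined, and unique rank-one exactness persists for all $d_\star(f)\le d\le n$.

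Second, for each pair $(a,d)$ with $1\le d\le n$, I define the bad set
\[
B(a,d):=\{f\in\cP_n:\ f-f(a)\in\cE(a,d)\setminus\cE_{\mathrm{sc}}(a,d)\}.
\]
This set is determined only by the degree-$\le 2d$ part, and it is empty unless $\deg f\le 2d$. The map $f\mapsto f-f(a)$ is a linear surjection from $\cP_{\le2d}$ onto $\cQ(a,d)$, with one-dimensional fibres (adding constants). Hence $B(a,d)\cap\cP_{\le2d}$ has dimension $\dim(\cE\setminus\cE_{\mathrm{sc}})+1$. By \cref{thm:cell-generic} this is at most $\dim\cQ(a,d)=\dim\cP_{\le2d}-1+1$, minus one, so it is strictly less than $\dim\cP_{\le 2d}\le 2^n$. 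Semialgebraicity is preserved by linear preimages (Tarski--Seidenberg). The closure of a semialgebraic set has the same dimension. So $\cl B(a,d)$ is a closed semialgebraic set of dimension $<2^n$.

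I set
\[
\Ggen:=\cP_n\setminus\Bigl(\bigcup_{a\ne b}H_{ab}\cup\bigcup_{a,d}\cl B(a,d)\Bigr).
\]
A finite union of closed semialgebraic sets of dimension $<2^n$ is closed, nowhere dense, and of Lebesgue measure zero. Its complement is therefore open, dense and semialgebraic.

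Third, I verify the conclusion. Fix $f\in\Ggen$ and an order $d$ with $d_\star(f)\le d\le n$. The set $\argmin(\mathrm P_d)$ depends on $f$ only up to additive constants, since $\ip{I_N}{X}$ is fixed on $\cX_d$. Hence $g:=f-f(x_\star)\in\cE(x_\star,d)$. Because $f\notin B(x_\star,d)$, we get $g\in\cE_{\mathrm{sc}}(x_\star,d)$: there is a PSD Gram matrix $S$ representing $g$ with $\ker S=\spanof\{b_d(x_\star)\}$. Then $S\in\Sopt$ has rank $N_d-1$. Now \cref{thm:boolean-fixed} applies, and with it \cref{cor:admm,cor:pdhg}, for every admissible parameter choice.

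The main obstacle is the dimension bookkeeping across different ambient spaces. The cells live in $\cQ(a,d)\subseteq\cP_{\le2d}$, not in $\cP_n$. One must check that the constant shift and the inclusion $\cP_{\le 2d}\hookrightarrow\cP_n$ cannot raise the dimension to $2^n$. The inclusion only lowers codimension relative to $\cP_n$ when $2d<n$, and even at $2d\ge n$ (where $\cP_{\le2d}=\cP_n$) the $+1$ from constants is exactly offset by $\dim\cQ=\dim\cP_{\le 2d}-1$. A secondary care point is to use closures, so that the excluded set is closed and the complement is open. This is legitimate because semialgebraic closure does not increase dimension, by the frontier theorem already cited.
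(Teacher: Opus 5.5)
Your proposal is correct and follows essentially the paper's own route (\cref{prop:simultaneous}): you exclude the tie hyperplanes, lift each exceptional cell $\cE(a,d)\setminus\cE_{\mathrm{sc}}(a,d)$ by additive constants, take the closure of the union, and then use constant-invariance of $\argmin(\mathrm P_d)$ to place $f-f(x_\star)$ in $\cE_{\mathrm{sc}}(x_\star,d)$ before invoking \cref{thm:boolean-fixed}. Your preimage under $f\mapsto f-f(a)$ is exactly the paper's lifted set $\{g+c\}$, of dimension $\dim(\cE\setminus\cE_{\mathrm{sc}})+1<|\cI_{2d}|\le 2^n$; one wording fix is that the inclusion $\cP_{\le 2d}\hookrightarrow\cP_n$ preserves dimension (it raises rather than lowers codimension), which is all your bound needs.
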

\begin{proof}
\Cref{prop:simultaneous} constructs $\Ggen$; for $f\in\Ggen$ and
$d\in[d_\star(f),n]$ the hypotheses of \cref{thm:boolean-fixed} hold and
\cref{sec:algorithms} applies, for every $\sigma>0$, since the argument of
\cref{sec:boolean} gives a positive modulus for each $\sigma$.
\end{proof}

%% file: sections/discussion.tex
\section{Conclusion}
\label{sec:discussion}

We proved that the degeneracy of exact Boolean Moment--SOS relaxations, in
the quotient form of \cref{sec:degeneracy}, is benign for first-order
splitting methods, answering for that formulation the open question
of~\cite{kang25arxiv-admmsdp-linearconv}. Under unique rank-one exactness and one strictly
complementary certificate, the abstract rigidity theorem yields a setwise
error bound around the optimal set. The three splitting methods therefore
converge eventually linearly from every initialization, whichever
certificate they select. Generically, the hypotheses hold at every
uniquely rank-one exact order.

\paragraph{Limitations.}
Our guarantee concerns the algorithmic residual, not objective-function
growth. It is a convergence-rate statement rather than a complexity
theorem, with instance-dependent constants, no bound on the entrance
time, and exact algorithms with fixed scalar parameters only. The rigidity abstract theorem removes
primal uniqueness; yet the Boolean theorem cannot (\cref{rem:tied-objectives}). The verification is tied to the Boolean
quotient through the two identities of \cref{lem:boolean-geometry}.
Redundant encodings (\cref{fig:motivation}(a)), sparse,
symmetry-reduced, or constrained hierarchies change the quotient algebra,
and the propagation can
fail outside it (\cref{ex:not-abstract}; \cite{kang26arxiv-afterimage}).

\paragraph{Open problems.}
First, a \emph{quantitative} rigidity: our constants come from
compactness; we conjecture that for the Boolean class they admit explicit
instance-parameter bounds, the missing step from rate separation to
complexity separation. Second, \emph{beyond
the Boolean quotient}: which exact relaxations satisfy the two kernel
identities of \cref{thm:rigidity}? We believe residual-field
rigidity is the right abstraction for exact relaxations with degenerate
optimal sets, and we hope it extends beyond the Boolean cube.

%% file: sections/app_related.tex
\section{Related Work}
\label{app:related}

\Cref{sec:intro} cites prior work in context; this appendix organizes it
into four threads.

\paragraph{Pseudo-Boolean optimization.}
PBO is a classical umbrella for discrete
optimization~\cite{boros02dam-pbo,kochenberger14joco-ubqp}. Two classical
paradigms use semidefinite relaxations: exact solvers that combine
branch-and-bound with polyhedral or SDP bounds~\cite{rendl10mp-biqmac}, and
approximation algorithms built on SDP
rounding~\cite{goemans95jacm-maxcut}. When these approaches use an SDP relaxation, our result concerns the
cost of \emph{solving} it rather than the cost of rounding or branching.

\paragraph{Moment--SOS for PBO.}
The hierarchy and its $0$--$1$ specializations are classical
\cite{lasserre01siopt-global,parrilo03mp-semidefinite,laurent03mor-comparison};
books and surveys
include~\cite{lasserre09book-moments,blekherman12book-semidefinite,barak14icm-sosproof,moitra20chapter-sostcs}.
Finite convergence and solution extraction are well studied: on the finite
cube full order is always exact, generic problems enjoy finite
convergence~\cite{nie14mp-optimality}, and flatness conditions certify
exactness and permit extraction~\cite{curto05jot-truncated}. On the
negative side, the worst-case lower bounds cited in \cref{sec:intro}
\cite{laurent03mor-cutpolytope,grigoriev01cc-knapsack,schoenebeck08focs-lasserre}
show that no fixed order suffices for all instances, and sum-of-squares
lower bounds extend to average-case problems such as planted
clique~\cite{barak19sic-sos-plantedclique}. The empirical
low-order-exactness literature
\cite{habibi25jgo-lasserre-ubqp,wang25jsc-manisdp,kang25arxiv-admmsdp-linearconv,yang23mp-stride} motivates our
conditional-on-exactness viewpoint. To our knowledge, the geometry of the
\emph{exact} regime at orders $d\ge2$---unique rank-one moment optimum,
primal degeneracy, a generically positive-dimensional certificate set---has
not been used to analyze first-order solvers.

\paragraph{Error bounds and other solvers for degenerate SDP.}
The regularity picture for splitting methods was summarized in
\cref{sec:intro}. Other solvers rest on different conditions. Augmented
Lagrangian methods converge linearly in their outer loop once a strictly
complementary pair \emph{exists} and the subproblems are solved accurately
enough~\cite{rockafellar76mor-augmented-lagrangians,rockafellar76sicon-monotone,liao24neurips-inexact-alm}.
The reason is quadratic growth of the primal and dual objectives, which for
SDP follows from one such pair with compact solution
sets~\cite{liao24neurips-inexact-alm}. Second-order growth is also the
standing assumption of the multiple-solution ALM analysis
of~\cite{cui16arxiv-superlinear-alm-sdp}, and growth of a convex objective
is equivalent to the proximal error bound and to linear rates of proximal
methods~\cite{drusvyatskiy18mor-error-bounds-quadratic-growth}. The cost
then moves into the subproblems. Generic first-order inner solvers are
again worst-case sublinear, and the fast low-rank factorized
route~\cite{burer03mp-nonlinear} needs a unique, strictly complementary
primal optimum (``primal simplicity'', see below) and a local
initialization~\cite{ding25arxiv-new-understanding-sdp-lowrank}. Spectral
bundle methods~\cite{helmberg00siopt-spectral-bundle} are linear under
strict complementarity and sufficient \emph{rank capture}; the capture
requirement is set by the kernel of the limiting dual slack, which can be
large for degenerate SDPs even when the primal optimum has rank
one~\cite{ding23siopt-spectral-bundle-revisit}. For semidefinite
feasibility sets without Slater-type regularity, the general guarantee is
a H\"older bound with exponent tied to the singularity
degree~\cite{sturm00siopt-error-bound-lmi}; see
also~\cite{zhou17mp-unified-error-bounds} for nuclear-norm problems.
Splitting methods are governed by a different quantity, the
Douglas--Rachford fixed-point residual. An error bound on this residual
gives linear convergence of Douglas--Rachford and ADMM, and it is known for
piecewise linear-quadratic and well-posed conic feasibility
problems~\cite{pena21arxiv-dr-error-bound,aspelmeier16siims-admm-dr-local-linear}.
For SDP, objective growth does not imply it: the tied-objective instance of
\cref{app:tied-objectives} meets the augmented Lagrangian hypotheses, yet
its Douglas--Rachford residual bound fails. Complementarity rigidity is
exactly what separates the two cases (\cref{thm:rigidity}).

\paragraph{Setwise versus pointwise guarantees.}
Our result differs from the pointwise analyses cited in \cref{sec:intro}
in two ways. First, the guarantee is \emph{setwise}, so the endpoint the
algorithm selects need satisfy neither strict complementarity nor
nondegeneracy. Second, the regularity is \emph{imported}, from a
uniform interior modulus to every singular boundary endpoint. The bound
\eqref{eq:canonical-eb} is a linear growth condition on the DR residual
made uniform over the fixed-point set, and by \cref{thm:rigidity} it is
equivalent to the stall cone equaling the tangent cone at every endpoint.
The companion paper~\cite{kang26arxiv-admmsdp-limitdyn} studies the
failure of this equality at a selected endpoint. There the gap
$\cC(\bar Z)\setminus\Tan_{\Zopt}(\bar Z)$ consists of the two cross
blocks coupling $\ran\bar X$ with $K_{\bar S}$ and $K_{\bar X}$ with
$\ran\bar S$ (its Proposition~3), and each direction in it carries a
nonzero second-order drift (its Proposition~6). Linear growth at $\bar Z$
closes the gap (its Corollary~1(3)). The identities \eqref{eq:kernel-tests}
say that these cross blocks are empty at every relative-boundary factor,
and (iii)$\Leftrightarrow$(v) is the setwise converse of that corollary.
The one-sided case, a unique primal optimum plus one strictly
complementary pair, is the \emph{primal simplicity} condition
of~\cite[Definition~5]{ding2021siopt-simplicity-lowrank-sdp}; the
two-sided theorem drops the uniqueness. In general conic programs, ADMM,
sGS--ADMM, and PDHG can become arbitrarily slow near degenerate
optima~\cite{kang26arxiv-afterimage,jiang26arxiv-linearconv-pdhg-sdp};
exact Boolean relaxations exclude this near their optimal set, with
instance-dependent constants (\cref{sec:discussion}).

%% file: sections/app_formulation.tex
\section{The Boolean Moment--SOS Formulation}
\label{app:formulation}

Throughout this appendix, fix $1\le d\le n$ and put $N=N_d$; recall the
index sets \eqref{eq:index-sets}, the XOR-class matrices $A_\Gamma$, and the
moment matrix \eqref{eq:moment-matrix}. As in \cref{sec:boolean}, set
\begin{equation}\label{eq:boolean-spaces}
 \cH_d:=\spanof\{A_\Gamma:\Gamma\in\cI_{2d}\},\qquad
 \cT_d:=\spanof\{A_\Gamma:\emptyset\ne\Gamma\in\cI_{2d}\},\qquad
 \cR_d:=\cT_d^\perp .
\end{equation}
Every $\Gamma\in\cI_{2d}$ occurs as a symmetric difference of two sets in
$\cI_d$: split it into two parts of size at most $d$. The XOR class of
$\Gamma$ is the set of unordered pairs $\{A,B\}$ with $A,B\in\cI_d$ and
$A\triangle B=\Gamma$, i.e.\ the support of $A_\Gamma$. Distinct classes
are disjoint, so the matrices $A_\Gamma$ are Frobenius orthogonal, and
$A_\emptyset=I_N$. Hence $\cH_d=\spanof\{I_N\}\oplus\cT_d$ as in
\eqref{eq:boolean-L}. Every matrix in $\cT_d$ has zero diagonal.

\paragraph{The Gram-to-polynomial map.}
Define the polynomial-valued linear map
\begin{equation}\label{eq:gram-map}
 \cG_d(Y):=\sum_{\Gamma\in\cI_{2d}}\ip{A_\Gamma}Y\,x^\Gamma,
 \qquad
 b_d(x)^\top Yb_d(x)\equiv\cG_d(Y)(x)\pmod{I_{\mathrm B}},
\end{equation}
so $\ker(\cG_d)=\cH_d^\perp$. The canonical representative
$C_f=\sum_\Gamma f_\Gamma A_\Gamma/\normF{A_\Gamma}^2$ of
\cref{sec:degeneracy} satisfies $\cG_d(C_f)=f$ and
$\ip{C_f}{M_d(y)}=\sum_\Gamma f_\Gamma y_\Gamma$ by orthogonality of the
classes. For moments, let
$L_y(\sum_\Gamma h_\Gamma x^\Gamma):=\sum_\Gamma h_\Gamma y_\Gamma$ be the
Riesz functional. For $g=c^\top b_d$ one has
$L_y(g^2\bmod I_{\mathrm B})=c^\top M_d(y)c$. Hence $M_d(y)\succeq0$ if and
only if $L_y(g^2\bmod I_{\mathrm B})\ge0$ for every square-free $g$ of
degree at most $d$.

\paragraph{The independent equality encoding.}
Let $E_{A,B}$ ($A,B\in\cI_d$) be the matrix units, and define the
symmetric units $F_{A,B}:=E_{A,A}$ if $A=B$ and
$F_{A,B}:=\tfrac12(E_{A,B}+E_{B,A})$ otherwise. Then
$\ip{F_{A,B}}X=X_{A,B}$ and $\cG_d(F_{A,B})=x^{A\triangle B}$, and the
$F_{A,B}$ with $A\le B$ (in a fixed total order on $\cI_d$) form a basis
of $\Sym^N$. For each XOR class $\Gamma$ choose one reference pair
$\{A,B\}$ (with $A=B=\emptyset$ for $\Gamma=\emptyset$), and write
$F^\circ_\Gamma:=F_{A,B}$ for its symmetric unit. For every other pair
$\{A,B\}$ in the class, include the consistency matrix
$D_{A,B}:=F_{A,B}-F^\circ_\Gamma$, where $\Gamma=A\triangle B$. Listing
these as $D_1,\ldots,D_{m-1}$, define
\begin{equation}\label{eq:Ad}
 \cA_dX:=\bigl(X_{\emptyset,\emptyset},\ip{D_1}X,\ldots,\ip{D_{m-1}}X\bigr),
 \qquad b:=e_1,
 \qquad m=\frac{N(N+1)}2-|\cI_{2d}|+1 .
\end{equation}

\begin{proposition}[Exact standard SDP encoding]\label{prop:encoding}
The matrices $D_1,\ldots,D_{m-1}$ form a basis of $\ker(\cG_d)=\cH_d^\perp$;
the operator $\cA_d$ has full row rank; and
\begin{equation}\label{eq:encoding-identities}
 \{X:\cA_dX=e_1\}=I_N+\cT_d,\qquad
 \ker(\cA_d)=\cT_d,\qquad
 \ran(\cA_d^*)=\cR_d,\qquad
 \cA_d^\dagger e_1=I_N .
\end{equation}
\end{proposition}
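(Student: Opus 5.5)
The plan is to prove the statements in the order listed, working in the basis $\{F_{A,B}\}_{A\le B}$ of $\Sym^N$. This basis is adapted to the XOR classes: each $F_{A,B}$ lies in exactly one class $\Gamma=A\triangle B$, and $\cG_d(F_{A,B})=x^{\Gamma}$.

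\emph{The $D$'s form a basis of $\ker\cG_d$.} By construction $\cG_d(D_{A,B})=x^\Gamma-x^\Gamma=0$, so each $D_{A,B}$ lies in $\ker\cG_d$. For independence, note that each $D_{A,B}$ contains the unit $F_{A,B}$ of a non-reference pair. No other $D$ contains that unit, since the other $D$'s involve only their own non-reference unit and a reference unit. Hence the $D$'s are linearly independent. Every $\Gamma\in\cI_{2d}$ is realized by at least one pair, so $\cG_d$ maps onto the $|\cI_{2d}|$-dimensional span of $\{x^\Gamma\}$. Rank--nullity then gives $\dim\ker\cG_d=N(N+1)/2-|\cI_{2d}|$, which equals the number $m-1$ of $D$'s. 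Together with $\ker\cG_d=\cH_d^\perp$, already noted after \eqref{eq:gram-map}, this proves the first claim.

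\emph{Full row rank and the affine set.} Observe that $E_{\emptyset,\emptyset}=F^\circ_\emptyset$ is a reference unit and never appears in any $D$. It is therefore independent of the $D$'s, which gives full row rank $m$. Next, $\cA_dX=0$ means $X\perp E_{\emptyset,\emptyset}$ and $X\perp\cH_d^\perp$. The second condition says $X\in\cH_d=\spanof\{I_N\}\oplus\cT_d$. Elements of $\cT_d$ have zero diagonal, while the $(\emptyset,\emptyset)$ entry of $cI_N$ is $c$. So the first condition forces $c=0$, and $\ker\cA_d=\cT_d$. It follows that $\ran\cA_d^*=\cT_d^\perp=\cR_d$.

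\emph{The particular solution.} The identity satisfies $\cA_d I_N=e_1$: its $(\emptyset,\emptyset)$ entry is $1$, and $I_N=A_\emptyset\in\cH_d\perp D_i$. Hence $\{X:\cA_dX=e_1\}=I_N+\cT_d$. Finally, $\cA_d^\dagger e_1$ is the minimum-norm solution, i.e.\ the unique solution lying in $\ran\cA_d^*=\cR_d$. Since $I_N=A_\emptyset$ is Frobenius orthogonal to every $A_\Gamma$ with $\Gamma\ne\emptyset$, we have $I_N\in\cR_d$, and so $\cA_d^\dagger e_1=I_N$.

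The step needing the most care is the dimension count: one must confirm that $\cG_d$ is onto the full $|\cI_{2d}|$-dimensional monomial span. This is exactly the splitting remark preceding the proposition, that every $\Gamma$ arises as $A\triangle B$ with $A,B\in\cI_d$. Everything else is bookkeeping with disjoint class supports.
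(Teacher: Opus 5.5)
\textbf{A false justification in the full-row-rank step.} You claim that $E_{\emptyset,\emptyset}=F^\circ_\emptyset$ never appears in any $D$, but it does.

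The class $\Gamma=\emptyset$ consists of all diagonal pairs $\{A,A\}$. For every $A\ne\emptyset$ its consistency matrix is $D_{A,A}=E_{A,A}-E_{\emptyset,\emptyset}$. More generally, each reference unit appears in every consistency matrix of its class.

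The conclusion is still true, and the repair takes one line, in either of two ways.

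\begin{itemize}
\item The paper's way: $\cG_d(E_{\emptyset,\emptyset})=1\ne0$, while every $D_i$ lies in $\ker\cG_d$. Hence $E_{\emptyset,\emptyset}\notin\spanof\{D_i\}$, and together with independence of the $D_i$ this gives rank $m$.
\item Your own triangularity argument, rerun: suppose $c_0E_{\emptyset,\emptyset}+\sum_ic_iD_i=0$. The coefficient of the non-reference unit of $D_i$ is exactly $c_i$, because neither $E_{\emptyset,\emptyset}$ (a reference unit) nor any other $D_j$ contains it. So all $c_i=0$, and then $c_0=0$.
\end{itemize}

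What the argument needs is that $E_{\emptyset,\emptyset}$ contains no non-reference unit, not that it is absent from the $D$'s.

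\textbf{Comparison with the paper.} With that fixed, your proof is correct and close to the paper's.

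For the basis claim, the paper argues class by class. Within one XOR class, $\cG_d$ sends every $F_{A,B}$ to the same monomial, so the kernel on that class is the set of zero-sum coefficient vectors. Differences from the reference unit form a basis of that set, and distinct classes have disjoint supports. You instead show membership in $\ker\cG_d$, prove independence via the distinct non-reference units, and match dimensions by rank--nullity. The count needs surjectivity of $\cG_d$ onto the monomial span, which you correctly get from the splitting remark; the paper's direct description does not need it. The two routes are equally short: the paper exhibits the spanning explicitly, while yours replaces it by a count.

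For the affine set, the paper reads $\cA_dX=e_1$ directly as ``entries constant on each class, with common diagonal value one.'' You first compute $\ker\cA_d=\cT_d$ through $\cH_d=\spanof\{I_N\}\oplus\cT_d$ and the zero diagonal of $\cT_d$, and then add the particular solution $I_N$.

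The identities $\ran\cA_d^*=\cR_d$ and $\cA_d^\dagger e_1=I_N$ are handled the same way in both: $I_N\in\cR_d$, and it is the unique solution lying in the row space.
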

\begin{proof}
The $F_{A,B}$ with $A\le B$ form a basis of $\Sym^N$. Within one XOR
class, $\cG_d$ maps each basis element to the same monomial. Hence its
kernel on that class consists of the coefficient vectors summing to zero,
and the differences from the reference unit $F^\circ_\Gamma$ form a basis
of it. Distinct classes have disjoint supports. This proves the kernel
assertion and independence of the $D_i$.
Since $\cG_d(E_{\emptyset,\emptyset})=1$, the normalization row is not in
their span, so $\cA_d$ has full row rank. The equalities $\cA_dX=e_1$ state
that the entries of $X$ are constant on each XOR class and that the common
diagonal value is one. This is the first identity in
\eqref{eq:encoding-identities}. Its direction space is $\cT_d$, and
orthogonal complementation gives the row space. Finally
$I_N\in\cR_d$ (matrices in $\cT_d$ have zero diagonal) and $\cA_dI_N=e_1$,
so $I_N$ is the unique feasible point in $\ran(\cA_d^*)$, namely
$\cA_d^\dagger e_1$.
\end{proof}

One may instead impose all pairwise consistency equalities, but then the
rows are redundant and $\cA_d\cA_d^*$ is singular; the independent choice
\eqref{eq:Ad} is a linear-algebra preprocessing decision, unrelated to
strict complementarity.

\paragraph{The SOS problem is the exact dual.}
With the encoding \eqref{eq:Ad}, the primal is
$\min\{\ip{C_f}X:\cA_dX=e_1,\ X\succeq0\}$. It has the strictly feasible
point $I_N$, and its feasible set is compact, since the diagonal entries
are one and PSD forces $|X_{A,B}|\le1$. Hence the minimum is attained and
strong duality holds. Writing the equality multiplier as
$\lambda=(\gamma,\nu_1,\ldots,\nu_{m-1})$, the Lagrangian dual is
\begin{equation}\label{eq:matrix-dual}
 \max_{\gamma,\nu,S}\ \gamma
 \quad\text{s.t.}\quad
 \gamma E_{\emptyset,\emptyset}+\sum_{j=1}^{m-1}\nu_jD_j+S=C_f,
 \qquad S\succeq0 .
\end{equation}
Applying $\cG_d$ eliminates the consistency multipliers and gives the SOS
problem
\begin{equation}\label{eq:sos-dual}
 (\mathrm D_d)\qquad
 \max_{\gamma\in\R,\ S\succeq0}\ \gamma
 \quad\text{s.t.}\quad
 f-\gamma\equiv b_d(x)^\top Sb_d(x)\pmod{I_{\mathrm B}} .
\end{equation}
No information is lost in this elimination: any feasible
$(\gamma,S)$ of \eqref{eq:sos-dual} has
$C_f-\gamma E_{\emptyset,\emptyset}-S\in\ker(\cG_d)$, so
\cref{prop:encoding} reconstructs the unique multipliers $\nu_j$ in
\eqref{eq:matrix-dual}. The SOS Gram matrix $S$ is therefore the standard
SDP dual slack. At optimality of an exact relaxation, the optimal slack set
is \eqref{eq:slack-set}.

\begin{lemma}[Trace identity and compactness]\label{lem:slack-trace}
Suppose the relaxation at order $d$ is exact with value $f_\star$. Every
$S\in\Sopt$ satisfies $\tr S=f_\emptyset-f_\star$; consequently $\Sopt$ is
compact.
\end{lemma}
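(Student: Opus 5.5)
The plan is to read off the trace of $S$ from the constant coefficient of the polynomial identity defining $\Sopt$, and then deduce compactness from positive semidefiniteness.

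\emph{Trace identity.} Let $S\in\Sopt$. By \eqref{eq:slack-set} and \eqref{eq:gram-map}, membership means $\cG_d(S)=f-f_\star$ as square-free polynomials, that is, $\ip{A_\Gamma}{S}=(f-f_\star)_\Gamma$ for every $\Gamma\in\cI_{2d}$. Take $\Gamma=\emptyset$. Since $A_\emptyset=I_N$, the left side is $\ip{I_N}{S}=\tr S$. The right side is the constant coefficient $f_\emptyset-f_\star$. Hence $\tr S=f_\emptyset-f_\star$ for every $S\in\Sopt$. The only point to verify is that the diagonal entries are exactly the pairs with $A\triangle B=\emptyset$; this holds because $A\triangle B=\emptyset$ forces $A=B$. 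Consequently no off-diagonal entry contributes to the constant term.

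\emph{Compactness.} For $S\succeq0$ we have $\normF{S}\le\tr S$, because $\normF S^2=\sum_i\lambda_i^2\le(\sum_i\lambda_i)^2$ when all $\lambda_i\ge0$. So $\Sopt$ lies in the Frobenius ball of radius $f_\emptyset-f_\star$ and is bounded. It is also closed, being the intersection of the closed cone $\Sym_+^N$ with the affine subspace $\{S:\cG_d(S)=f-f_\star\}$, which is the preimage of a point under a linear map. Closed and bounded in finite dimension gives compactness. As a by-product, $f_\emptyset\ge f_\star$ whenever $\Sopt\ne\emptyset$.

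No step is a real obstacle here. The only care needed is the bookkeeping that the $\Gamma=\emptyset$ class consists precisely of the diagonal.
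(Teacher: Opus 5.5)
Your proof is correct and is essentially the paper's argument: $\tr S=\ip{A_\emptyset}{S}$ is the constant coefficient of $\cG_d(S)=f-f_\star$, namely $f_\emptyset-f_\star$. Compactness then follows because $\Sopt$ is closed and $\normF{S}\le\tr S$ for positive semidefinite $S$.
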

\begin{proof}
$\tr S=\ip{A_\emptyset}S$ is the constant coefficient of $\cG_d(S)=f-f_\star$,
which is $f_\emptyset-f_\star$. The set is closed, and
$\normF S\le\tr S$ for PSD matrices, so it is bounded.
\end{proof}

\paragraph{Primal degeneracy.}
We now prove \cref{prop:degeneracy}. Throughout, $d\ge d_\star(f)$, so
$X_\star=uu^\top=Nqq^\top$ is the unique primal optimum
(\cref{sec:degeneracy}). Primal nondegeneracy of
$X_\star$~\cite{alizadeh9mp-complementarity-nondegeneracy} means that
$\cA_d$ is surjective on the tangent space at $X_\star$ to the manifold of
rank-one PSD matrices. This tangent space is
\begin{equation}\label{eq:nondeg-definition}
 \{qv^\top+vq^\top:v\in\R^N\}=\Sym(V)^\perp,
 \qquad\text{so nondegeneracy reads}\qquad
 \cA_d\bigl(\Sym(V)^\perp\bigr)=\R^m .
\end{equation}
This is a condition on the optimum relative to the constraints, distinct
from mere singularity of $X_\star$.

\begin{proof}[Proof of \cref{prop:degeneracy}]
By the constant-diagonal identity \eqref{eq:transverse-intersection},
$\ker(\cA_d)\cap\Sym(V)^\perp=\cT_d\cap\Sym(V)^\perp=\{0\}$, so $\cA_d$ is
injective on the $N$-dimensional tangent space
\eqref{eq:nondeg-definition}; hence nondegeneracy holds if and only if
$m=N$.
For the dimension formula, $\cR_d+\Sym(V)=\Sym^N$ because the orthogonal
complement of the sum is $\cT_d\cap\Sym(V)^\perp=\{0\}$; therefore
\[
 \dim \cL=\dim\bigl(\cR_d\cap\Sym(V)\bigr)
 =m+\frac{(N-1)N}2-\frac{N(N+1)}2=m-N,
\]
which is the dimension formula of \cref{prop:degeneracy} by \eqref{eq:Ad}. At $d=1$, $N=n+1$ and
$|\cI_2|=1+n+\binom n2$, so
$m=\frac{(n+1)(n+2)}2-\binom n2-n-1+1=n+1=N$. For $d\ge2$, take the two
Gram vectors $w_1=e_\emptyset-a\,e_{\{i,j\}}$ and
$w_2=e_{\{i\}}-a\,e_{\{j\}}$ of \cref{sec:degeneracy}. Both are orthogonal
to $u$, and they represent the same polynomial \eqref{eq:same-poly}. Hence
$w_1w_1^\top-w_2w_2^\top$ is a nonzero element of
$\Sym(V)\cap\cH_d^\perp=\cL$ by \eqref{eq:boolean-L}, so $\dim\cL\ge1$ and
$m>N$. For the final clause, \cref{ass:two-sided} holds here:
$\Xopt=\{X_\star\}$, $\Sopt$ is compact (\cref{lem:slack-trace}), and
$(X_\star,\Ssc)$ is an anchor pair. So \cref{lem:product-geometry} gives
$\Sopt=(\Ssc+\cL)\cap\Sym_+(V)$. Since $\Ssc$ is positive definite on $V$,
a relative neighborhood of $\Ssc$ in $\Ssc+\cL$ stays inside $\Sopt$.
Hence $\aff\Sopt=\Ssc+\cL$ and $\dim\Sopt=\dim\cL=m-N$.
\end{proof}

Finally, assume strict complementarity. Then $\Sopt$ is compact and of
positive dimension, so its relative boundary is nonempty. Every certificate
positive definite on $V$ lies in $\ri\Sopt$ (\cref{lem:product-geometry}).
Hence every relative-boundary certificate loses rank, as claimed in
\cref{sec:degeneracy}. Without strict complementarity, the
positive-dimension conclusion can genuinely fail. The affine set $S_0+\cL$
through a certificate $S_0$ can meet $\Sym_+(V)$ in the single point
$S_0$, which is then rank-deficient. This is why \cref{prop:degeneracy}
states the conclusion as a conditional clause.

%% file: sections/app_spectral.tex
\section{Endpoint Spectral Calculus}
\label{app:spectral}

\paragraph{Conventions.}
If $E\subseteq\R^p$ is a subspace with orthogonal projector $P_E$, then
$\Sym(E):=\{H\in\Sym^p:H=P_EHP_E\}$ and
$\Sym_+(E):=\Sym(E)\cap\Sym_+^p$; the complement $\Sym(E)^\perp$ is taken in
$\Sym^p$. For a nonempty closed convex set $C$ and $z\in C$,
\[
 T_C(z):=\{h:\exists t_k\downarrow0,\ z_k\in C,\ (z_k-z)/t_k\to h\},
 \qquad
 N_C(z):=\{h:\ip h{w-z}\le0\ \forall w\in C\},
\]
and $N_C(z):=\emptyset$ for $z\notin C$; $\ri$, $\rb$, $\aff$, and $\cl$
denote relative interior, relative boundary, affine hull, and closure.
Vectors take the Euclidean norm $\normtwo{\cdot}$, and $\normop{\cdot}$ is
the operator norm. Directional derivatives are
$F'(z;h):=\lim_{t\downarrow0}(F(z+th)-F(z))/t$, positively homogeneous in
$h$ but not necessarily linear. For a linear map $B:E_1\to E_2$ between
Euclidean spaces with $E_1\ne\{0\}$, set
$\sigma_{\min}(B):=\min_{\|h\|=1}\|Bh\|$; it is positive if and only if $B$ is
injective, and
$|\sigma_{\min}(B_1)-\sigma_{\min}(B_2)|\le\normop{B_1-B_2}$.

\paragraph{Blocks.}
Throughout, $U=\ran\Xsc$ and $V=\ran\Ssc$ are the fixed spaces of
\cref{sec:rigidity:setup}, with fixed orthonormal bases $B_U,B_V$. For
$H\in\Sym^p$ we write $H_{UU}:=B_U^\top HB_U$, $H_{UV}:=B_U^\top HB_V$,
$H_{VV}:=B_V^\top HB_V$; a statement such as ``$H_{VV}=0$'' means
$P_VHP_V=0$ and does not depend on the bases. For a matrix $A$ supported
on $U$ we also write $A|_U:=A_{UU}$, and for one supported on $V$ we write
$A|_V:=A_{VV}$. Thus $\lambda_{\min}(\Xsc|_U)$ is the smallest eigenvalue
of the $UU$ block of $\Xsc$. For an endpoint
$\bar Z=\bar X-\sigma\bar S$ the moving spaces are $K_{\bar X}=\ker\bar
X\cap U$ and $K_{\bar S}=\ker\bar S\cap V$, so that
$U=\ran\bar X\oplus K_{\bar X}$ and $V=K_{\bar S}\oplus\ran\bar S$. (In
the block notation of~\cite{kang26arxiv-admmsdp-limitdyn}, these four
spaces are $\alpha_+,\alpha_0^{\mathrm P},\alpha_0^{\mathrm D},\alpha_-$.)

\subsection{The DR encoding and the optimal product}
\label{app:drs-slice}

\begin{lemma}[DR and KKT encoding]\label{lem:drs}
$T_{\sigma,1}$ is the Douglas--Rachford map
$Z\mapsto Z+\operatorname{prox}_{\sigma F}(2\Pi_+(Z)-Z)-\Pi_+(Z)$ for
$F(X)=\ip CX+\iota_{\{\cA X=b\}}(X)$ and $G=\iota_{\Sym_+^p}$; each
step applies the PSD projection $\Pi_+=\operatorname{prox}_{\sigma G}$
first and the affine proximal map $\operatorname{prox}_{\sigma F}$ second. Its fixed
points are exactly the matrices $X-\sigma S$ with $(X,S)$ a KKT pair,
decoded via $X=\Pi_+(Z)$, $S=-\sigma^{-1}\Pi_-(Z)$.
\end{lemma}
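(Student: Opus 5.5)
The plan is to verify the Douglas--Rachford formula directly, by computing the affine proximal map in closed form, and then to characterize the fixed points through the Moreau decomposition $Z=\Pi_+(Z)+\Pi_-(Z)$.

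\emph{Step 1 (the two proximal maps).} The first proximal map is standard: $\operatorname{prox}_{\sigma G}=\Pi_+$, since $G=\iota_{\Sym_+^p}$. For the second, $\operatorname{prox}_{\sigma F}(W)$ minimizes $\sigma\ip CX+\tfrac12\normF{X-W}^2$ over the affine set $\{\cA X=b\}=X_b+\cN$. Completing the square reduces this to the Euclidean projection of $W-\sigma C$ onto $X_b+\cN$. That projection is $X_b+\Pnull(W-\sigma C-X_b)$, and since $\Pnull X_b=0$ it equals $X_b+\Pnull(W-\sigma C)$.

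\emph{Step 2 (matching the formula).} Write $X:=\Pi_+(Z)$ and $W:=2X-Z$. Using $Z-X=\Pi_-(Z)$, we get $W=X-\Pi_-(Z)$. Substituting into the DR map gives
\[
Z+X_b+\Pnull\bigl(X-\Pi_-(Z)-\sigma C\bigr)-X .
\]
Now split $X=\Prow X+\Pnull X$ and use $\Prow X_b=X_b$. The expression collapses to
\[
Z-\Prow\bigl(X-X_b\bigr)-\Pnull\bigl(\Pi_-(Z)+\sigma C\bigr)=Z-\rpar(Z)-\rper_\sigma(Z),
\]
which is exactly $T_{\sigma,1}(Z)$ from \eqref{eq:res-components}. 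This is a sign-bookkeeping check, and I expect it to be the only place where care is needed.

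\emph{Step 3 (fixed points).} The two residual components lie in the orthogonal spaces $\cR$ and $\cN$. Hence $Z$ is a fixed point if and only if $\rpar(Z)=0$ and $\rper_\sigma(Z)=0$ hold separately. Set $X=\Pi_+(Z)\succeq0$ and $S:=-\sigma^{-1}\Pi_-(Z)\succeq0$.

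The condition $\rpar=0$ says $X-X_b\in\cN$, which is equivalent to $\cA X=b$.

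The condition $\rper_\sigma=0$ says $\Pi_-(Z)+\sigma C\in\cR$. This is equivalent to $C-S\in\cR=\ran\cA^*$, that is, $C-S=\cA^*\lambda$ for some $\lambda$, which is uniquely determined because $\cA$ is surjective.

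The Moreau decomposition gives $XS=0$ automatically, since $\Pi_+(Z)$ and $\Pi_-(Z)$ have orthogonal ranges. So every fixed point yields a primal-feasible $X$, a dual-feasible $(\lambda,S)$, and zero complementarity gap, which is a KKT pair with $Z=X-\sigma S$.

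Conversely, start from a KKT pair $(X,S)$. Then $X,S\succeq0$ with $XS=0$, so $X$ and $\sigma S$ have orthogonal ranges. By uniqueness of the Moreau decomposition, $Z:=X-\sigma S$ satisfies $\Pi_+(Z)=X$ and $\Pi_-(Z)=-\sigma S$. The feasibility conditions then make both residual components vanish, so $Z$ is a fixed point. This also shows that the decoding $X=\Pi_+(Z)$, $S=-\sigma^{-1}\Pi_-(Z)$ is well defined and inverts the encoding.
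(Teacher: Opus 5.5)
Your proposal is correct and follows the paper's proof essentially step for step. You compute $\operatorname{prox}_{\sigma F}(W)=X_b+\Pnull(W-\sigma C)$ and substitute it to recover \eqref{eq:res-components}; you then use the orthogonality of the two residual components and the Moreau decomposition to identify fixed points with KKT pairs, filling in details the paper leaves implicit (the completing-the-square derivation, $\Pnull X_b=0$ and $\Prow X_b=X_b$, and uniqueness of the Moreau decomposition for the converse).
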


\begin{proof}[Proof of \cref{lem:drs}]
The affine proximal map is
$\operatorname{prox}_{\sigma F}(W)=X_b+\Pnull(W-\sigma C)$. Substituting it
into $Z^+=Z+\operatorname{prox}_{\sigma F}(2\Pi_+(Z)-Z)-\Pi_+(Z)$ gives
\eqref{eq:res-components}; this is the standard DR
construction~\cite{eckstein92mp-drs,bauschke17springer-convex-analysis-hilbert-spaces}.
The Moreau decomposition of $Z$ gives $X:=\Pi_+(Z)\succeq0$,
$S:=-\sigma^{-1}\Pi_-(Z)\succeq0$, and $XS=0$. The two orthogonal equations
$\rpar=\rper_\sigma=0$ are equivalent to $\cA X=b$ and
$C-S\in\ran(\cA^*)$, hence to KKT feasibility plus complementarity;
surjectivity of $\cA$ makes the multiplier unique. Conversely any KKT pair
$(X,S)$ gives the fixed point $Z=X-\sigma S$.
\end{proof}

\begin{lemma}[Optimal product geometry]\label{lem:product-geometry}
Under \cref{ass:two-sided},
\begin{equation}\label{eq:product-app}
 \Xopt=(\Xsc+\cM)\cap\Sym_+(U),\qquad
 \Sopt=(\Ssc+\cL)\cap\Sym_+(V),\qquad
 \Zopt=\Fix T_{\sigma,1}=\Xopt-\sigma\Sopt .
\end{equation}
Each $\bar Z\in\Zopt$ has a unique representation $\bar X-\sigma\bar S$
with $(\bar X,\bar S)\in\Xopt\times\Sopt$. Moreover
$\spanof(\Xopt-\Xopt)=\cM$, $\spanof(\Sopt-\Sopt)=\cL$,
$\aff\Zopt=\Xsc-\sigma\Ssc+\cD$, and
\begin{equation}\label{eq:ri-rb}
 \ri\Xopt=\{X\in\Xopt:X_{UU}\succ0\},\qquad
 \ri\Sopt=\{S\in\Sopt:S_{VV}\succ0\},
\end{equation}
with the relative boundaries the complementary sets; $\bar X-\sigma\bar S\in
\ri\Zopt$ if and only if $\bar X\in\ri\Xopt$ and $\bar S\in\ri\Sopt$.
Finally, $\Zopt$ is compact and convex.
\end{lemma}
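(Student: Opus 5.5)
We prove the lemma in three stages: the support statements for optimal pairs, the product description of $\Zopt$ with unique factors, and the affine-hull and relative-interior statements. Compactness and convexity then follow at once.

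\emph{Stage 1: supports.} Every optimal $X$ satisfies $\ip X{\Ssc}=0$ by strong duality, since the anchor pair has zero gap and all optimal pairs share the optimal value. Because $X,\Ssc\succeq0$, this forces $X\Ssc=0$, so $\ran X\subseteq\ker\Ssc=U$. Symmetrically, $\ip{\Xsc}S=0$ gives $\ran S\subseteq V$ for every optimal $S$.

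\emph{Stage 2: the product formulas.} If $X\in\Xopt$, then $X-\Xsc\in\cN$ (both satisfy $\cA X=b$) and $X-\Xsc\in\Sym(U)$, so $X\in(\Xsc+\cM)\cap\Sym_+(U)$. Conversely, take $X$ in that set. It is feasible, and $\ip CX=\ip{\cA^*\lambda_{\rm sc}+\Ssc}X=b^\top\lambda_{\rm sc}$ because $\ip{\Ssc}X=0$ by the supports. So $X$ attains the dual value and is optimal. The dual case is identical: $S-\Ssc\in\cR\cap\Sym(V)$ keeps $C-S\in\cR$. The resulting $\lambda$ has value $b^\top\lambda=\ip{C-S}{\Xsc}=\ip C{\Xsc}$, using $S\perp\Xsc$ and the fact that $\cA^*$ pairs with feasible $\Xsc$ to give $b$. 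For $\Zopt$, \cref{lem:drs} identifies the fixed points with the matrices $X-\sigma S$ over KKT pairs. KKT pairs are exactly $\Xopt\times\Sopt$: complementarity is automatic by the supports, and primal--dual optimality under the zero gap is equivalent to KKT. Hence $\Zopt=\Xopt-\sigma\Sopt$. Uniqueness of the factors holds because $U\perp V$, so $\bar X=P_U\bar ZP_U$ and $\bar S=-\sigma^{-1}P_V\bar ZP_V$; alternatively, use Moreau decomposition, since $\bar X\bar S=0$.

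\emph{Stage 3: spans, affine hull, and relative interiors.} The inclusion $\spanof(\Xopt-\Xopt)\subseteq\cM$ is clear. For the reverse inclusion, note that $\Xsc|_U\succ0$, so $\Xsc+tE\in\Xopt$ for every $E\in\cM$ and small $|t|$. Hence $\Xsc$ is interior in $\Xsc+\cM$ and the span is all of $\cM$. The same argument applies to $\Sopt$, and summing gives $\aff\Zopt=\Xsc-\sigma\Ssc+\cM\oplus\cL$. For \eqref{eq:ri-rb}, suppose $X_{UU}\succ0$. Then a ball in $X+\cM$ stays in $\Sym_+(U)$, so $X\in\ri\Xopt$. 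Conversely, suppose $X\in\ri\Xopt$ but $X_{UU}$ is singular. Extend the segment from $\Xsc$ through $X$ slightly beyond $X$; this is allowed in the relative interior. The extended point is $X+\epsilon(X-\Xsc)$, and it has a negative eigenvalue on a kernel vector $v\in U$ of $X$, because $v^\top\Xsc v>0$. This is a contradiction. The relative interior of a product of convex sets is the product of the relative interiors. The linear map $(X,S)\mapsto X-\sigma S$ is injective on $\Sym(U)\times\Sym(V)$, so it preserves relative interiors, giving the endpoint characterization. Finally, $\Zopt$ is the image of the compact convex set $\Xopt\times\Sopt$ under a linear map, so it is compact and convex.

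The only delicate point is Stage 2's converse direction: one must check that membership in the affine slice plus the support condition really yields optimality. This requires the zero duality gap from the anchor pair. The remaining steps are routine.
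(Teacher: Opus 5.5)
Your proposal is correct and follows essentially the same route as the paper. Both arguments use the same steps: supports from complementarity with the anchor pair, the slice description via feasibility plus complementarity with $\Ssc$ (resp.\ $\Xsc$), \cref{lem:drs} for the fixed points, compression to $U$ and $V$ for uniqueness, the segment from $\Xsc$ continued past a singular $\bar X$ for the relative boundary, and the linear isomorphism $(X,S)\mapsto X-\sigma S$ for the product statements; the only difference is that you spell out the objective-value computation the paper compresses into ``feasible and complementary to $\Ssc$, hence optimal.''
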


\begin{proof}[Proof of \cref{lem:product-geometry}]
For any primal optimizer $X$ and optimal slack $S$, equality of the optimal
values gives $\ip XS=0$, hence $XS=SX=0$ since both are PSD. Taking
$S=\Ssc$ shows $\ran X\subseteq\ker\Ssc=U$, and taking $X=\Xsc$ shows
$\ran S\subseteq\ker\Xsc=V$. Primal feasibility gives $X-\Xsc\in\cN$ and
dual feasibility gives $S-\Ssc\in\cR$, which proves the inclusions
``$\subseteq$'' in the first two identities of \eqref{eq:product-app}.
Conversely, $X\in(\Xsc+\cM)\cap\Sym_+(U)$ is feasible and complementary to
$\Ssc$, hence optimal; symmetrically for $S$. Since $\Xsc$ is positive
definite on $U$, $\Xsc\pm\eps M$ is PSD for every $M\in\cM$ and small
$\eps>0$, so $\spanof(\Xopt-\Xopt)=\cM$; likewise for $\cL$.

Every pair $(X,S)\in\Xopt\times\Sopt$ has disjoint supports, so $XS=0$,
$\Pi_+(X-\sigma S)=X$, $\Pi_-(X-\sigma S)=-\sigma S$, and
$\delta_\sigma(X-\sigma S)=0$ by \cref{lem:drs}; conversely a fixed point
decodes to a KKT pair, which is optimal. This proves the third identity.
Compressing $\bar Z$ to $U$ and $V$ recovers $\bar X$ and $-\sigma\bar S$,
so the representation is unique. The map $(X,S)\mapsto X-\sigma S$ is a
linear isomorphism from $\Sym(U)\times\Sym(V)$ onto
$\Sym(U)\oplus\Sym(V)$. This gives the affine hull and reduces the
relative-interior statement to the factors. A point of $\Xopt$ positive
definite on $U$ has a relative neighborhood inside $\Xopt$. A point
$\bar X$ with singular $\bar X_{UU}$ is not relatively interior: the line
through $\bar X$ and $\Xsc$, continued past $\bar X$, leaves the PSD cone
immediately. The same argument applies to $\Sopt$ inside $\Sym_+(V)$,
which proves \eqref{eq:ri-rb}. Compactness and convexity of $\Zopt$ follow
from those of the factors.
\end{proof}

\subsection{The endpoint derivative formula}
\label{app:deriv-validity}

Fix an endpoint $\bar Z=\bar X-\sigma\bar S$. Choose orthonormal
eigenvector matrices $Q_+$ of $\bar X$ on $\ran\bar X$ (eigenvalues
$\xi_i>0$) and $Q_-$ of $\bar S$ on $\ran\bar S$ (eigenvalues $s_j>0$).
Choose also an orthonormal basis $Q_K$ of the zero eigenspace
$K:=K_{\bar X}\oplus K_{\bar S}$ of $\bar Z$. The frame
$Q:=[\,Q_+\ Q_K\ Q_-\,]$ is orthogonal, and
\begin{equation}\label{eq:eigen-chart}
 Q^\top\bar ZQ=\diag(\Lambda_+,\,0,\,-\sigma\Lambda_-),\qquad
 \widetilde H:=Q^\top HQ,\qquad
 \Lambda_+=\diag(\xi_i)\succ0,\quad\Lambda_-=\diag(s_j)\succ0 .
\end{equation}
In this chart the directional derivative of the PSD projection takes the
classical form~\cite{sun02mor-semismooth-matrix-valued-function}
\begin{equation}\label{eq:positive-deriv}
 Q^\top\,\Pi_+'(\bar Z;H)\,Q
 =\begin{bmatrix}
 \widetilde H_{++}&\widetilde H_{+K}&\Omega\circ\widetilde H_{+-}\\
 \widetilde H_{K+}&\Pi_+(\widetilde H_{KK})&0\\
 (\Omega\circ\widetilde H_{+-})^\top&0&0
 \end{bmatrix},\qquad
 \Omega_{ij}:=\frac{\xi_i}{\xi_i+\sigma s_j}\in(0,1).
\end{equation}
Here $\Pi_-'(\bar Z;H)=H-\Pi_+'(\bar Z;H)$ and $\circ$ is the entrywise
product. The entries of $\Omega$ are the divided differences of
$t\mapsto\max\{t,0\}$ across the nonzero spectral blocks. The $+K$ block
carries the divided difference $\xi_i/(\xi_i-0)=1$, and the $K-$ block
carries $0/(0+\sigma s_j)=0$. The zero block contributes its own
directional derivative $\Pi_+(\widetilde H_{KK})$. The formula is valid with repeated
eigenvalues, because on any pair of repeated-eigenvalue eigenspaces the
multiplier is constant, so changes of eigenbasis give the same ambient
derivative after conjugation by $Q$. For $E:=\Pi_+'(\bar Z;H)$ and
$F:=\Pi_-'(\bar Z;H)$ we read off
\begin{equation}\label{eq:EF-identities}
 E+F=H,\quad \ip E{\bar S}=0,\quad \ip F{\bar X}=0,\quad
 \ip EF=2\!\!\sum_{i\in+,\,j\in-}\!\!\Omega_{ij}(1-\Omega_{ij})
 \,|(\widetilde H_{+-})_{ij}|^2\ \ge0,
\end{equation}
since $E$ has zero $--$ block, $F$ has zero $++$ block, the $KK$ parts pair
as $\ip{\Pi_+(M)}{\Pi_-(M)}=0$, and only the $+-$ overlap survives. Also
$\normF E,\normF F\le\normF H$, because $\Pi_\pm$ are nonexpansive and a
directional derivative is a limit of difference quotients, and
\begin{equation}\label{eq:res-deriv}
 \delta_\sigma'(\bar Z;H)=-\Prow E-\Pnull F,\qquad
 \normF{\delta_\sigma'(\bar Z;H)}^2=\normF{\Prow E}^2+\normF{\Pnull F}^2 .
\end{equation}

\paragraph{Three kinds of endpoints in the fixed coarse coordinates.}
Whenever $K_{\bar X}=\{0\}$ or $K_{\bar S}=\{0\}$, the same derivative
reads, in the fixed bases $B_U,B_V$,
\begin{equation}\label{eq:coarse-derivative}
 [B_U\ B_V]^\top\,\Pi_+'(\bar Z;H)\,[B_U\ B_V]
 =\begin{bmatrix}H_{UU}^{\sharp}&Y\\Y^\top&H_{VV}^{\sharp}\end{bmatrix},
 \qquad
 \bar X_{UU}\,Y+\sigma\,Y\,\bar S_{VV}=\bar X_{UU}\,H_{UV}.
\end{equation}
We call this $U\oplus V$ block description \emph{coarse}, in contrast with
the eigen-chart \eqref{eq:eigen-chart}, which refines $U$ and $V$ by the
moving kernels. Here $H_{UU}^\sharp$ is $H_{UU}$ with its
$K_{\bar X}K_{\bar X}$ block replaced by the positive part of that block.
Similarly, $H_{VV}^\sharp$ is the positive part of the
$K_{\bar S}K_{\bar S}$ block of $H_{VV}$, embedded in $V$. The cross block
$Y$ is the unique solution of the displayed Sylvester equation. Indeed, in
eigencoordinates the equation reads
$(\xi_i+\sigma s_j)\widetilde Y_{ij}=\xi_i\widetilde H_{ij}$ with
$\xi_i\ge0$ on $U$ and $s_j\ge0$ on $V$. This reproduces the $+-$, $+K$,
and $K-$ blocks of \eqref{eq:positive-deriv}. It is uniquely solvable
exactly when $\xi_i+\sigma s_j>0$ for all pairs, that is, when
$K_{\bar X}=\{0\}$ or $K_{\bar S}=\{0\}$. When both kernels are
nontrivial, the $K_{\bar X}K_{\bar S}$ part of $\Pi_+(\widetilde H_{KK})$
has no such description, and we never use the coarse form there. Three
cases are used:
\begin{enumerate}[label=\textup{(\alph*)},leftmargin=2.2em,itemsep=1pt]
\item \emph{Interior endpoints} ($\bar X_{UU}\succ0$, $\bar S_{VV}\succ0$):
$K=\{0\}$, $\Pi_+$ is Fr\'echet differentiable at $\bar Z$, and
$J_{\bar X,\bar S}:=\Pi_+'(\bar Z;\cdot)$ is the linear map
$H\mapsto\bigl[\begin{smallmatrix}H_{UU}&Y\\Y^\top&0\end{smallmatrix}\bigr]$
of \eqref{eq:sylvester-main}.
\item \emph{Primal-anchored endpoints} $\bar Z=\Xsc-\sigma\bar S$:
$K=K_{\bar S}$, $H^\sharp_{UU}=H_{UU}$, and the Sylvester equation has
$\Xsc|_U\succ0$ on the left.
\item \emph{Dual-anchored endpoints} $\bar Z=\bar X-\sigma\Ssc$:
$K=K_{\bar X}$, $H^\sharp_{VV}=0$, and the Sylvester equation has
$\Ssc|_V\succ0$ on the right.
\end{enumerate}

\subsection{Two uniform Sylvester estimates}
\label{app:sylvester}

\begin{lemma}[Sylvester estimates]\label{lem:sylvester}
Let $A\in\Sym_+(U)$ and $B\in\Sym_+(V)$ be given in the fixed bases, and
let $\mathsf{Syl}:Y\mapsto AY+\sigma YB$ act on $\dim U\times\dim V$
matrices.
\begin{enumerate}[label=\textup{(\alph*)},leftmargin=2.2em,itemsep=1pt]
\item If $A\succeq aI$ with $a>0$, then $\mathsf{Syl}$ is invertible and
$\|\mathsf{Syl}^{-1}\|_{F\to F}\le a^{-1}$, uniformly over all $B\succeq0$.
\item If $B\succeq bI$ with $b>0$, then $\mathsf{Syl}$ is invertible and
$\|\mathsf{Syl}^{-1}\|_{F\to F}\le(\sigma b)^{-1}$, uniformly over all
$A\succeq0$.
\end{enumerate}
In particular, let $a_X:=\lambda_{\min}(\Xsc|_U)$ and
$a_S:=\lambda_{\min}(\Ssc|_V)$. The Sylvester operators of the
primal-anchored endpoints have inverse norm at most $a_X^{-1}$ for every
$\bar S\in\Sopt$. Those of the dual-anchored endpoints have inverse norm
at most $(\sigma a_S)^{-1}$ for every $\bar X\in\Xopt$.
\end{lemma}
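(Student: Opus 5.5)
The plan is to diagonalize the two commuting one-sided multiplications and read off the inverse entrywise. Choose orthonormal eigenbases $A=W\,\diag(\alpha_i)W^\top$ of $A$ on $U$ and $B=R\,\diag(\beta_j)R^\top$ of $B$ on $V$, with all $\alpha_i,\beta_j\ge0$. Set $\widetilde Y:=W^\top YR$. The map $Y\mapsto\widetilde Y$ is a Frobenius isometry on $\dim U\times\dim V$ matrices, and in these coordinates
\[
 \bigl(W^\top\mathsf{Syl}(Y)R\bigr)_{ij}=(\alpha_i+\sigma\beta_j)\,\widetilde Y_{ij}.
\]
This is the same eigencoordinate computation already used after \eqref{eq:coarse-derivative}. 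Thus $\mathsf{Syl}$ is unitarily similar, in the Frobenius sense, to a diagonal (Hadamard) multiplier with entries $\alpha_i+\sigma\beta_j$.

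For part (a), $A\succeq aI$ gives $\alpha_i\ge a$, and $B\succeq0$ gives $\beta_j\ge0$. Hence every multiplier satisfies $\alpha_i+\sigma\beta_j\ge a>0$. The operator is therefore invertible, and its inverse is the entrywise division by these multipliers. That inverse has Frobenius operator norm $\max_{i,j}(\alpha_i+\sigma\beta_j)^{-1}\le a^{-1}$, with no dependence on $B$.

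Part (b) is symmetric. Now $\beta_j\ge b$ and $\alpha_i\ge0$, so every multiplier is at least $\sigma b$, and the inverse norm is at most $(\sigma b)^{-1}$ uniformly in $A\succeq0$.

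For the "in particular" clause, apply (a) with $A=\Xsc|_U$, which is positive definite with $\lambda_{\min}=a_X>0$ by \cref{ass:two-sided}, and with $B=\bar S_{VV}$. The latter is PSD for every $\bar S\in\Sopt$, since $\Sopt\subseteq\Sym_+(V)$ by \cref{lem:product-geometry}; this holds whether $\bar S$ is singular or not. Likewise, apply (b) with $B=\Ssc|_V$, where $a_S>0$, and $A=\bar X_{UU}\succeq0$ for any $\bar X\in\Xopt$. These are exactly the Sylvester operators of the primal- and dual-anchored endpoints in cases (b) and (c) of \cref{app:deriv-validity}.

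I expect no real obstacle; the argument is routine. The only points needing care are two. First, the similarity must preserve the Frobenius norm, which holds because $W$ and $R$ are orthogonal. Second, the bound must use only the anchor's eigenvalue floor and never a lower bound on the moving factor, which is precisely why the nonnegativity of the other factor suffices.
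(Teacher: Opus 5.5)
Your proof is correct, but it takes a different route from the paper. You diagonalize $A$ and $B$ separately and conjugate by $Y\mapsto W^\top YR$. This turns $\mathsf{Syl}$ into the Hadamard multiplier $(\alpha_i+\sigma\beta_j)$, from which you read off invertibility and the inverse norm entrywise.

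The paper never diagonalizes. It uses the quadratic form $\ip Y{\mathsf{Syl}Y}=\tr(Y^\top AY)+\sigma\tr(Y^\top YB)$ and notes that both terms are nonnegative. In case (a) the first term is at least $a\normF Y^2$, and in case (b) the second is at least $\sigma b\normF Y^2$. Cauchy--Schwarz then gives $\normF{\mathsf{Syl}Y}\ge a\normF Y$ (resp.\ $\sigma b\normF Y$), and injectivity in equal finite dimensions gives invertibility.

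Your route yields more. It computes the spectrum of the self-adjoint operator $\mathsf{Syl}$ exactly, so $\|\mathsf{Syl}^{-1}\|_{F\to F}=(\lambda_{\min}(A)+\sigma\lambda_{\min}(B))^{-1}$. This value is sharp and contains (a) and (b) as its two one-sided weakenings. It also matches the eigencoordinate equation $(\xi_i+\sigma s_j)\widetilde Y_{ij}=\xi_i\widetilde H_{ij}$ the paper uses for the coarse derivative.

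The paper's coercivity argument has its own advantages. It is basis-free and needs only a lower bound on the quadratic form. It would therefore survive where orthogonal diagonalization is unavailable, for example a nonsymmetric left factor whose symmetric part is bounded below by $aI$. It is also the same pairing trick the paper reuses inline in the proof of (i)$\Rightarrow$(ii), where \eqref{eq:anchored-primal-sylvester} is paired with $Y_\eps-B$.

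Your handling of the ``in particular'' clause is what the paper intends. It uses only $\bar S_{VV}\succeq0$ and $\bar X_{UU}\succeq0$, never a floor on the moving factor.
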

\begin{proof}
For every $Y$,
$\ip Y{\mathsf{Syl}Y}=\tr(Y^\top AY)+\sigma\tr(Y^\top YB)$, and both terms
are nonnegative. In case (a) the first term is at least $a\normF Y^2$, in
case (b) the second is at least $\sigma b\normF Y^2$; Cauchy--Schwarz then
gives $\normF{\mathsf{Syl}Y}\ge a\normF Y$, respectively
$\ge\sigma b\normF Y$. Injectivity in equal finite dimensions gives
invertibility and the bounds.
\end{proof}

\subsection{Two-sided tangent and normal cones}
\label{app:cones}

For an endpoint $\bar Z=\bar X-\sigma\bar S$ define the two
kernel-supported PSD cones
$\cK(\bar X):=\{W\in\Sym_+^p:\ran W\subseteq K_{\bar X}\}$ and
$\cK(\bar S):=\{W\in\Sym_+^p:\ran W\subseteq K_{\bar S}\}$.

\begin{proposition}[Endpoint cones]\label{prop:cones}
Under \cref{ass:two-sided}, at $\bar Z=\bar X-\sigma\bar S\in\Zopt$,
\begin{align}
 T_{\Xopt}(\bar X)&=\{M\in\cM:P_{K_{\bar X}}MP_{K_{\bar X}}\succeq0\},&
 N_{\Xopt}(\bar X)&=\cM^\perp-\cK(\bar X),\label{eq:primal-cones}\\
 T_{\Sopt}(\bar S)&=\{\Lambda\in \cL:P_{K_{\bar S}}\Lambda P_{K_{\bar S}}\succeq0\},&
 N_{\Sopt}(\bar S)&=\cL^\perp-\cK(\bar S),\label{eq:dual-cones}\\
 \Tan_{\Zopt}(\bar Z)&=T_{\Xopt}(\bar X)-\sigma T_{\Sopt}(\bar S),&
 N_{\Zopt}(\bar Z)&=\cD^\perp-\cK(\bar X)+\cK(\bar S),\label{eq:shadow-normal}
\end{align}
where the sums are Minkowski sums and all cones are taken in $\Sym^p$.
\end{proposition}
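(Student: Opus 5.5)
The plan is to prove the factor cones first and then combine them through the product structure of \cref{lem:product-geometry}.

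\textbf{Primal factor.} By \cref{lem:product-geometry}, $\Xopt=(\Xsc+\cM)\cap\Sym_+(U)$. This is the intersection of an affine subspace with a closed convex cone. Moreover, $\Xsc$ lies in $\Xsc+\cM$ and is relatively interior to $\Sym_+(U)$ inside $\Sym(U)$. This is a Slater-type point for the intersection, taken inside the ambient space $\Sym(U)$, since $\cM\subseteq\Sym(U)$. Under such a constraint qualification the tangent cone of the intersection is the intersection of the tangent cones:
\[T_{\Xopt}(\bar X)=\cM\cap T_{\Sym_+(U)}(\bar X).\]
The classical formula for the tangent cone of a PSD cone gives
\[T_{\Sym_+(U)}(\bar X)=\{W\in\Sym(U):P_{K_{\bar X}}WP_{K_{\bar X}}\succeq0\}.\]
Here $K_{\bar X}$ is the kernel of $\bar X$ inside $U$. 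This yields the tangent formula in \eqref{eq:primal-cones}.

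For the normal cone I will take polars inside $\Sym^p$. The normal cone of the intersection is the closure of the sum of the normal cones, and closure is not needed under Slater. The normal cone of the affine set is $\cM^\perp$. The normal cone of $\Sym_+(U)$ at $\bar X$, viewed in $\Sym^p$, is $\Sym(U)^\perp-\cK(\bar X)$. Since $\Sym(U)^\perp\subseteq\cM^\perp$, the sum collapses to $\cM^\perp-\cK(\bar X)$.

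A direct alternative for the tangent inclusion ``$\supseteq$'' avoids citing the constraint qualification. Given $M$ in the right-hand set, consider $\bar X+t(M+\eps(\Xsc-\bar X))$. For small $t$ this point is PSD, because the $K_{\bar X}K_{\bar X}$ block picks up $\eps t\,\Xsc|_{K}\succ0$. Letting $\eps\to0$ and using closedness of the tangent cone gives the inclusion. The dual factor \eqref{eq:dual-cones} is the identical argument with $(\Ssc,\cL,V)$.

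\textbf{Product.} The map $\Phi(X,S)=X-\sigma S$ is a linear isomorphism from $\Sym(U)\times\Sym(V)$ onto $\Sym(U)\oplus\Sym(V)$, and $\Zopt=\Phi(\Xopt\times\Sopt)$. Tangent cones of a product are products of tangent cones, and they transform linearly under $\Phi$. This gives
\[\Tan_{\Zopt}(\bar Z)=T_{\Xopt}(\bar X)-\sigma T_{\Sopt}(\bar S).\]
For the normal cone I will compute the polar of this tangent cone in $\Sym^p$. A matrix $G$ lies in $N_{\Zopt}(\bar Z)$ exactly when
\[\ip G{M}\le0\ \ \forall M\in T_{\Xopt}(\bar X)\quad\text{and}\quad \ip G{\Lambda}\ge0\ \ \forall \Lambda\in T_{\Sopt}(\bar S).\]

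\textbf{Decomposing a normal.} Write $G=G_U+G_V+G_c$, where $G_U:=P_UGP_U$, $G_V:=P_VGP_V$, and $G_c\in(\Sym(U)\oplus\Sym(V))^\perp\subseteq\cD^\perp$ collects the cross blocks. Test directions lie in $\Sym(U)$ or $\Sym(V)$, so the two conditions constrain $G_U$ and $G_V$ separately. First, $G_U\in N_{\Xopt}(\bar X)\cap\Sym(U)$. Second, $-G_V\in N_{\Sopt}(\bar S)\cap\Sym(V)$. The factor formulas, compressed to $U$ and to $V$, give
\[G_U\in(\cM^\perp\cap\Sym(U))-\cK(\bar X),\qquad G_V\in(\cL^\perp\cap\Sym(V))+\cK(\bar S).\]
Here I use that compressing $\cM^\perp$ to $\Sym(U)$ stays inside $\cM^\perp$ because $\cM\subseteq\Sym(U)$, and that $\cK(\bar X)\subseteq\Sym(U)$. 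Summing the pieces and noting $\cD^\perp=\cM^\perp\cap\cL^\perp$ as subspaces of $\Sym^p$ gives ``$\subseteq$'' in \eqref{eq:shadow-normal}. The reverse inclusion ``$\supseteq$'' is a direct sign check against the tangent cone.

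\textbf{Main obstacle.} The delicate point is the bookkeeping with the orthogonal complements. The compressed piece $\cM^\perp\cap\Sym(U)$ lies in $\cM^\perp$ but not necessarily in $\cL^\perp$, so it is not a priori inside $\cD^\perp$. It is, however: it sits in $\Sym(U)\perp\Sym(V)\supseteq\cL$, hence it is orthogonal to $\cL$ as well. I will verify this explicitly, together with the closedness of the Minkowski sums. Closedness holds because each sum is a subspace plus a closed pointed cone, and the cone meets that subspace only at $0$. To see this, note that a nonzero $W\in\cK(\bar X)\cap\cM^\perp$ would pair positively with $\Xsc-\bar X$, which contradicts $\Xsc-\bar X\in\cM$.
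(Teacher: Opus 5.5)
Your proposal is correct and follows essentially the paper's proof. The factor cones come from the relative Slater point $\Xsc$, using the same perturbation $\eps(\Xsc-\bar X)$ inside $\cM$, and the product cones come from the linear isomorphism $(X,S)\mapsto X-\sigma S$. The only differences are in the product normal cone and the closedness check. For the product normal cone, the paper writes it as $N_{\Xopt}(\bar X)\cap(-N_{\Sopt}(\bar S))$ and rewrites $Y:=H+W_U-W_V\in\cM^\perp\cap\cL^\perp$, where you split into $U$, $V$, and cross blocks. Your separate closedness check is harmless but unnecessary, since Slater removes the closure in the factor step and the set equality with a normal cone already forces closedness.
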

\begin{proof}
For the cone supported on $U$, the tangent and ambient normal formulas are
\[
 T_{\Sym_+(U)}(\bar X)=\{H\in\Sym(U):P_{K_{\bar X}}HP_{K_{\bar X}}\succeq0\},
 \qquad
 N_{\Sym_+(U)}(\bar X)=\Sym(U)^\perp-\cK(\bar X).
\]
The tangent condition is necessary, as one sees by evaluating feasible
secants on vectors of $K_{\bar X}$. It is sufficient for the following
reason. Adding $\eps P_{K_{\bar X}}$ to a direction $H$ makes its kernel
block positive definite. The Schur complement then shows that
$\bar X+t(H+\eps P_{K_{\bar X}})$ is PSD for small $t>0$. Letting
$\eps\downarrow0$ closes the cone. The normal formula is the polar: the
free cross and range blocks force the corresponding normal blocks to
vanish, and the polar of the PSD kernel cone is its negative. For the
slice $\Xopt=(\Xsc+\cM)\cap\Sym_+(U)$ the same approximation can be
performed inside $\cM$. Replace $H$ by $H+\eps(\Xsc-\bar X)$, whose
kernel block is positive definite because $\Xsc\succ0$ on $U$. The
relative Slater point $\Xsc$ justifies the normal-cone intersection
rule~\cite{rockafellar15book-convex-analysis}. Hence
$N_{\Xopt}(\bar X)=\cM^\perp+\Sym(U)^\perp-\cK(\bar X)=\cM^\perp-\cK(\bar X)$,
using $\cM\subseteq\Sym(U)$. This proves \eqref{eq:primal-cones}, and
\eqref{eq:dual-cones} is the same argument on $V$ with the anchor $\Ssc$.

For the product, the map $(X,S)\mapsto X-\sigma S$ is a linear
isomorphism of $\Sym(U)\times\Sym(V)$ onto $\Sym(U)\oplus\Sym(V)$, so it
carries the tangent cone of $\Xopt\times\Sopt$, which is the product of
the tangent cones, onto $T_{\Xopt}(\bar X)-\sigma T_{\Sopt}(\bar S)$. For
the normal cone, $H\in N_{\Zopt}(\bar Z)$ if and only if
$\ip H{X-\bar X}\le0$ for all $X\in\Xopt$ and $\ip H{S-\bar S}\ge0$ for
all $S\in\Sopt$ (take $S=\bar S$ for the first inequality and $X=\bar X$
for the second), that is,
$H\in N_{\Xopt}(\bar X)\cap\bigl(-N_{\Sopt}(\bar S)\bigr)
=(\cM^\perp-\cK(\bar X))\cap(\cL^\perp+\cK(\bar S))$. If
$H=Y-W_U+W_V$ with $Y\in\cD^\perp$, $W_U\in\cK(\bar X)$,
$W_V\in\cK(\bar S)$, then $H$ lies in both sets because
$W_V\in\Sym(V)\subseteq\cM^\perp$ and $W_U\in\Sym(U)\subseteq \cL^\perp$.
Conversely, if $H=Y_1-W_U=Y_2+W_V$ with $Y_1\in\cM^\perp$,
$Y_2\in \cL^\perp$, then $Y:=H+W_U-W_V=Y_1-W_V=Y_2+W_U$ lies in
$\cM^\perp\cap \cL^\perp=\cD^\perp$, so $H=Y-W_U+W_V$. The sign of the
slack cone is positive because the fixed-point variable contains
$-\sigma S$, and positive scalars do not change cones.
\end{proof}

At a relative-interior endpoint both kernels vanish, so
$\Tan_{\Zopt}(\bar Z)=\cD$ and $N_{\Zopt}(\bar Z)=\cD^\perp$. At a boundary
endpoint the two extra cones may be nonzero, and the three summands in
\eqref{eq:shadow-normal} are \emph{not} orthogonal in general. This is the
precise sense in which the boundary is harder than the interior, and why
\cref{lem:normal-bound} bounds the summands through the anchors rather
than through orthogonality.

\subsection{Three PSD matrix bounds}
\label{app:psd-block}

\begin{lemma}[PSD off-block bound]\label{lem:psd-cauchy}
If $S\succeq0$ and $B_E,B_F$ are orthonormal frames for orthogonal
subspaces, then
$\normF{B_E^\top SB_F}^2\le\tr(B_E^\top SB_E)\,\tr(B_F^\top SB_F)$.
\end{lemma}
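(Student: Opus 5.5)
The natural route is to reduce to a $2\times2$ principal submatrix of $S$ and use Cauchy--Schwarz for the PSD form. Write $e_1,\dots,e_k$ for the columns of $B_E$ and $f_1,\dots,f_l$ for the columns of $B_F$. These vectors are orthonormal, and each $e_i$ is orthogonal to each $f_j$. The squared Frobenius norm expands entrywise as
\[
 \normF{B_E^\top SB_F}^2=\sum_{i,j}(e_i^\top Sf_j)^2 ,
\]
while the two traces are $\tr(B_E^\top SB_E)=\sum_i e_i^\top Se_i$ and $\tr(B_F^\top SB_F)=\sum_j f_j^\top Sf_j$.

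The key step is a bound on each cross entry. Since $S\succeq0$, the bilinear form $(x,y)\mapsto x^\top Sy$ is a positive semidefinite inner product. Cauchy--Schwarz for it gives
\[
 (e_i^\top Sf_j)^2\le (e_i^\top Se_i)(f_j^\top Sf_j)
\]
for every pair $(i,j)$. Equivalently, the $2\times2$ compression of $S$ to $\spanof\{e_i,f_j\}$ is PSD, so its determinant is nonnegative.

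Summing this inequality over all $i,j$ gives
\[
 \sum_{i,j}(e_i^\top Sf_j)^2\le\Bigl(\sum_i e_i^\top Se_i\Bigr)\Bigl(\sum_j f_j^\top Sf_j\Bigr),
\]
which is exactly the claim. Orthogonality of the two subspaces is not used in this argument; it only records the intended off-block setting.

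A slicker alternative is available. Factor $S=R^\top R$ and set $a:=RB_E$ and $b:=RB_F$. Then $B_E^\top SB_F=a^\top b$, and $\normF{a^\top b}\le\normF a\,\normF b$ by submultiplicativity of the Frobenius norm. Since $\normF a^2=\tr(B_E^\top SB_E)$ and $\normF b^2=\tr(B_F^\top SB_F)$, squaring gives the claim. There is no real obstacle in either version; the only point needing care is to use the entrywise bound or the factorization, rather than the weaker operator-norm estimates.
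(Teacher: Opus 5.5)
Your proposal is correct. Your ``slicker alternative'' is the paper's proof, with $R=S^{1/2}$. The paper writes the block as $(S^{1/2}B_E)^\top(S^{1/2}B_F)$, applies $\normF{a^\top b}\le\normF{a}\,\normF{b}$, and uses $\normF{S^{1/2}B_E}^2=\tr(B_E^\top SB_E)$.

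Your main argument takes a slightly different, more elementary route. You apply Cauchy--Schwarz for the semi-inner product $(x,y)\mapsto x^\top Sy$ entry by entry. You then observe that the double sum $\sum_{i,j}(e_i^\top Se_i)(f_j^\top Sf_j)$ factors into the product of the two traces. This avoids the matrix square root and Frobenius submultiplicativity.

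As you note, it also makes plain that orthogonality of the two subspaces is never used. Neither is orthonormality of the frames. The same holds for the paper's factorization argument, so both proofs give the inequality for arbitrary matrices $B_E,B_F$ of compatible size.
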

\begin{proof}
The block is $(S^{1/2}B_E)^\top(S^{1/2}B_F)$, so
$\normF{B_E^\top SB_F}\le\normF{S^{1/2}B_E}\,\normF{S^{1/2}B_F}$; square and
use $\normF{S^{1/2}B_E}^2=\tr(B_E^\top SB_E)$, likewise for $B_F$.
\end{proof}

\begin{lemma}[PSD product bound]\label{lem:psd-product}
For $A,B\in\Sym_+^p$,
$\normF{AB}^2\le\normop A\,\normop B\,\ip AB$.
\end{lemma}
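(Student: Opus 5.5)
The plan is to reduce the inequality to a Cauchy--Schwarz step after inserting square roots. First I will write $\normF{AB}^2=\tr(BA^2B)=\tr(A^2B^2)$. Then I will factor $A=A^{1/2}A^{1/2}$ and $B=B^{1/2}B^{1/2}$, so that
\[
 \normF{AB}^2=\normF{A^{1/2}\,(A^{1/2}B^{1/2})\,B^{1/2}}^2 .
\]
The point is to split off the middle factor $A^{1/2}B^{1/2}$, whose squared Frobenius norm is exactly $\tr(B^{1/2}AB^{1/2})=\tr(AB)=\ip AB$. The outer factors are then controlled in operator norm.

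The key estimate is submultiplicativity in mixed form, $\normF{PMR}\le\normop P\,\normF M\,\normop R$. Applying it with $P=A^{1/2}$, $M=A^{1/2}B^{1/2}$ and $R=B^{1/2}$ gives
\[
 \normF{AB}^2\le\normop{A^{1/2}}^2\,\normF{A^{1/2}B^{1/2}}^2\,\normop{B^{1/2}}^2 .
\]
Since $A$ and $B$ are PSD, $\normop{A^{1/2}}^2=\normop A$ and $\normop{B^{1/2}}^2=\normop B$. Combining this with $\normF{A^{1/2}B^{1/2}}^2=\ip AB$ yields the claim.

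The only point that needs care is the mixed submultiplicativity inequality. To justify it, I will write $\normF{PM}^2=\tr(M^\top P^\top PM)$ and use $P^\top P\preceq\normop P^2 I$, which gives $\normF{PM}\le\normop P\normF M$. Applying the same bound to the transpose handles right multiplication by $R$. Nothing else in the argument is delicate, and no eigenvalue lower bounds enter, which is what the later use of \cref{lem:psd-product} in the outline of \cref{thm:rigidity} requires.
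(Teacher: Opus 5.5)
Your proof is correct and is essentially the paper's argument. The paper's proof is the chain $\normF{AB}^2=\tr(A^2B^2)\le\normop A\tr(AB^2)\le\normop A\normop B\tr(AB)$, which uses $A^2\preceq\normop A\,A$, $B^2\preceq\normop B\,B$, and monotonicity of PSD trace pairings. Once you unpack your mixed submultiplicativity applied to $A^{1/2}(A^{1/2}B^{1/2})B^{1/2}$, it produces exactly these two inequalities (note $\normF{A^{1/2}B}^2=\tr(AB^2)$), so the square-root factorization is only a repackaging of the same Loewner-order bounds.
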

\begin{proof}
Since $A^2\preceq\normop A\,A$ and $B^2\preceq\normop B\,B$, and trace
pairings of PSD matrices preserve the Loewner order,
$\normF{AB}^2=\tr(A^2B^2)\le\normop A\tr(AB^2)\le\normop A\normop B\tr(AB)$.
\end{proof}

\begin{lemma}[Projection lower bound]\label{lem:pv-bound}
Let $W\in\Sym_+^p$ and let $P_E$ be the orthogonal projector onto a
subspace $E\subseteq\R^p$. Then $W\succeq aP_E$ for some $a>0$ if and only
if $E\subseteq\ran(W)$; in that case, when $W\ne0$, one may take $a=\lambda_+(W)$, the
smallest positive eigenvalue of $W$.
\end{lemma}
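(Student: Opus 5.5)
The plan is to prove the two implications separately, using only the spectral decomposition of $W\succeq0$. Write $W=\sum_i w_i v_iv_i^\top$ with orthonormal eigenvectors $v_i$ and $w_i>0$ on $\ran(W)$. The remaining eigenvectors span $\ker(W)=\ran(W)^\perp$.

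For necessity, assume $W\succeq aP_E$ with $a>0$, and take $x\in E\cap\ker(W)$. Then
\[
0=x^\top Wx\ \ge\ a\,x^\top P_Ex=a\normtwo x^2,
\]
so $x=0$. This does not yet give $E\subseteq\ran(W)$, so I strengthen the argument as follows. Take any $x\in E$ and split it as $x=x_r+x_k$ with $x_r\in\ran(W)$ and $x_k\in\ker(W)$. Consider the quadratic form on the test vector $x_k$ itself. We have $x_k^\top Wx_k=0$, while
\[
x_k^\top P_Ex_k=\normtwo{P_Ex_k}^2 .
\]
Hence $P_Ex_k=0$, i.e.\ $x_k\perp E$. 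Since $x\in E$, this gives $\normtwo{x_k}^2=\ip{x_k}{x}=0$. So $x\in\ran(W)$, which proves $E\subseteq\ran(W)$.

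For sufficiency with the explicit constant, assume $E\subseteq\ran(W)$ and $W\ne0$, and set $a=\lambda_+(W)>0$. Then $W\succeq a\,P_{\ran(W)}$, because on $\ran(W)$ every eigenvalue of $W$ is at least $a$, and both sides vanish on $\ker(W)$. Moreover $P_{\ran(W)}\succeq P_E$ whenever $E\subseteq\ran(W)$, since $P_{\ran(W)}-P_E$ is the orthogonal projector onto $\ran(W)\ominus E$. Chaining the two Loewner inequalities gives $W\succeq aP_E$. The degenerate case $E=\{0\}$ is harmless: if $W\ne0$ the constant $a=\lambda_+(W)$ still works, and if $W=0$ any $a>0$ works, since $P_E=0$.

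No step is a real obstacle. The only point needing care is the necessity direction: the naive argument shows only $E\cap\ker(W)=\{0\}$, which is weaker than $E\subseteq\ran(W)$. Testing the form on the kernel component $x_k$ itself fixes this, as above.
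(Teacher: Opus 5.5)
Your proof is correct and follows the same route as the paper. For necessity you evaluate $W\succeq aP_E$ on kernel vectors to get $P_E\ker(W)=0$, hence $E\subseteq\ker(W)^\perp=\ran(W)$; for sufficiency you chain $W\succeq\lambda_+(W)P_{\ran(W)}\succeq\lambda_+(W)P_E$. Your splitting $x=x_r+x_k$ and your separate treatment of the case $W=0$ just spell out steps the paper states in one line.
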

\begin{proof}
Necessity: if $W\succeq aP_E$ and $z\in\ker(W)$, then
$0=z^\top Wz\ge a\normtwo{P_Ez}^2$, so $P_Ez=0$; hence $\ker(W)\perp E$,
i.e.\ $E\subseteq\ker(W)^\perp=\ran(W)$. Sufficiency: for every $x$,
$x^\top Wx\ge\lambda_+(W)\normtwo{P_{\ran(W)}x}^2$, and $E\subseteq\ran(W)$
gives $\normtwo{P_{\ran(W)}x}\ge\normtwo{P_Ex}$, so
$x^\top Wx\ge\lambda_+(W)\,x^\top P_Ex$.
\end{proof}

\begin{remark}[Positive definiteness on $E$ is not enough]\label{rem:pv}
The condition $E\subseteq\ran(W)$ in \cref{lem:pv-bound} cannot be replaced
by ``$W$ is positive definite on $E$'', which only says
$E\cap\ker(W)=\{0\}$. For
$W=\bigl(\begin{smallmatrix}1&1\\1&1\end{smallmatrix}\bigr)$ and
$E=\spanof\{e_1\}$, the quadratic form $(te_1)^\top W(te_1)=t^2$ is
positive on $E$, yet $\det(W-aP_E)=-a<0$ for every $a>0$, so no inequality
$W\succeq aP_E$ holds. In \cref{lem:normal-bound} the stronger condition is
available for both anchors because $\ran(\Xsc)=U$ and $\ran(\Ssc)=V$.
\end{remark}

%% file: sections/app_rigidity_proofs.tex
\section{Proofs of the Rigidity Theorem}
\label{app:rigidity}

Throughout, \cref{ass:two-sided} is in force, endpoints are written
$\bar Z=\bar X-\sigma\bar S$ with $(\bar X,\bar S)\in\Xopt\times\Sopt$, and
we use the conventions of \cref{app:spectral}. Recall
$a_X=\lambda_{\min}(\Xsc|_U)>0$ and $a_S=\lambda_{\min}(\Ssc|_V)>0$ from
\cref{lem:sylvester}, and set
\begin{equation}\label{eq:anchor-constants}
 R_X:=\max_{X\in\Xopt}\normF{\Xsc-X},\qquad
 R_S:=\max_{S\in\Sopt}\normF{\Ssc-S},
\end{equation}
finite by compactness. The implications of \cref{thm:rigidity} are proved
in the order
(i)$\Rightarrow$(ii) (\cref{app:rigidity-tests}),
(ii)$\Rightarrow$(iii) (\cref{app:rigidity-eb}),
(iii)$\Rightarrow$(iv)$\Rightarrow$(i) and
(iii)$\Rightarrow$(v)$\Rightarrow$(ii) (\cref{app:rigidity-linearization}).
The ``automatic'' clause is \cref{lem:automatic-interior}, and the
$\sigma$-independence clause holds because \eqref{eq:kernel-tests} does
not involve $\sigma$ while the constants in (i), (iii), (iv) do.

\subsection{Interior endpoints: both identities and the linearization are automatic}
\label{app:automatic-interior}

\begin{lemma}[Automatic interior identities]\label{lem:automatic-interior}
For $\bar S\in\ri\Sopt$ the primal identity in \eqref{eq:kernel-tests}
holds, and for $\bar X\in\ri\Xopt$ the dual identity holds. Moreover, for
every $\bar Z\in\ri\Zopt$,
\begin{equation}\label{eq:automatic-interior}
 \cC(\bar Z)=\cD=\Tan_{\Zopt}(\bar Z),
\end{equation}
without any of the conditions (i)--(v).
\end{lemma}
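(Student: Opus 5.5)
The argument splits into the two kernel identities and the linearization identity \eqref{eq:automatic-interior}.

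\emph{Primal identity at $\bar S\in\ri\Sopt$.} By \eqref{eq:ri-rb}, $\bar S_{VV}\succ0$, so $\ker\bar S=U$. Take $E\in\cN$ with $P_VEP_V=0$ and $E\bar S=0$. The condition $E\bar S=0$ says that $E$ vanishes on $\ran\bar S=V$, so $EP_V=0$, hence $P_VE=0$ by symmetry. Then $E=P_UEP_U\in\Sym(U)$, and with $E\in\cN$ this gives $E\in\cM$. Conversely, every $M\in\cM$ is supported on $U$, so $P_VMP_V=0$ and $M\bar S=0$ because $\ran\bar S\subseteq V$. The dual identity at $\bar X\in\ri\Xopt$ is the same argument with $U$ and $V$ swapped and $\cR$ in place of $\cN$.

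\emph{Linearization at $\bar Z\in\ri\Zopt$.} Both kernels vanish. By \cref{prop:cones}, $\Tan_{\Zopt}(\bar Z)=\cM-\sigma\cL=\cD$, and $\cD\subseteq\cC(\bar Z)$ always holds. For the reverse inclusion, take $H$ with $\delta_\sigma'(\bar Z;H)=0$. Set $E=J(H)$ and $F=H-E$ as in case (a) of \cref{app:deriv-validity}. By \eqref{eq:res-deriv} the two orthogonal components vanish separately, so $E\in\cN$ and $F\in\cR$, whence $\ip EF=0$. Now apply \eqref{eq:EF-identities}. Since $\ip EF$ is a sum of the nonnegative terms $\Omega_{ij}(1-\Omega_{ij})|\widetilde H_{+-,ij}|^2$ with $\Omega_{ij}\in(0,1)$, it forces $\widetilde H_{+-}=0$. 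Here $K=\{0\}$, so the $+$ and $-$ eigenspaces are exactly $U$ and $V$, and this says $H_{UV}=0$. The Sylvester solution is then $Y=0$, which gives $E=P_UHP_U\in\Sym(U)\cap\cN=\cM$ and $F=P_VHP_V\in\Sym(V)\cap\cR=\cL$. Therefore $H=E+F\in\cM\oplus\cL=\cD$.

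The only step needing care is this second part. Two facts must be justified there: that the positive-part and negative-part eigenspaces coincide with $U$ and $V$ when both kernels are trivial, and that the vanishing of $\ip EF$ kills the whole cross block, which holds because each weight $\Omega_{ij}(1-\Omega_{ij})$ is strictly positive. Both follow directly from \eqref{eq:positive-deriv} and \eqref{eq:EF-identities}. Everything else is support bookkeeping.
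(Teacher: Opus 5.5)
Your proposal is correct and follows the paper's proof essentially step for step. You get the kernel identities from $\ran\bar S=V$, which the paper writes blockwise as $B\bar S_{VV}=0$. You get the linearization from $\ip EF=0$ with the strictly positive weights $\Omega_{ij}(1-\Omega_{ij})$, which kill $H_{UV}$ and leave $E=P_UHP_U\in\cM$ and $F=P_VHP_V\in\cL$.

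The one difference is cosmetic. The paper checks $\cD\subseteq\cC(\bar Z)$ directly from $J_{\bar X,\bar S}(M+\Lambda)=M$, whereas you deduce it from the general inclusion $\Tan_{\Zopt}(\bar Z)\subseteq\cC(\bar Z)$. Your phrase ``$\cD\subseteq\cC(\bar Z)$ always holds'' should therefore read ``holds here, since $\Tan_{\Zopt}(\bar Z)=\cD$''; at boundary endpoints the inclusion can fail.
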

\begin{proof}
Let $\bar S\in\ri\Sopt$, so $\bar S_{VV}\succ0$ by \eqref{eq:ri-rb}. If
$E\in\cN$ satisfies $E_{VV}=0$ and $E\bar S=0$, write its blocks as
$\bigl[\begin{smallmatrix}A&B\\B^\top&0\end{smallmatrix}\bigr]$; the $UV$
block of $E\bar S=0$ reads $B\bar S_{VV}=0$, so $B=0$ and
$E\in\Sym(U)\cap\cN=\cM$. The inclusion of $\cM$ in the left side is
immediate, since elements of $\cM$ are supported on $U$ and annihilate
every matrix supported on $V$. The dual statement is symmetric, with
$\bar X_{UU}\succ0$.

Now let $\bar Z=\bar X-\sigma\bar S\in\ri\Zopt$ and $H\in\cC(\bar Z)$. Put
$E:=J_{\bar X,\bar S}(H)$ and $F:=H-E$. The two terms of
\eqref{eq:res-deriv} lie in the orthogonal spaces $\cR$ and $\cN$, so
$\delta_\sigma'(\bar Z;H)=0$ gives $\Prow E=0$ and $\Pnull F=0$, hence
$E\in\cN$, $F\in\cR$, and $\ip EF=0$. By \eqref{eq:EF-identities} at an
interior endpoint, where every weight $\Omega_{ij}$ lies strictly between
$0$ and $1$, this forces $H_{UV}=0$; then $E=P_UHP_U\in\cM$ and
$F=P_VHP_V\in \cL$ by the coarse form \eqref{eq:coarse-derivative}, so
$H\in\cD$. Conversely, for $M\in\cM$ and $\Lambda\in \cL$ one has
$J_{\bar X,\bar S}(M+\Lambda)=M$ (the cross block is zero and the $VV$
block is killed), so $\delta_\sigma'(\bar Z;M+\Lambda)=-\Prow M-\Pnull
\Lambda=0$. Finally, at a relative-interior point of the convex set
$\Zopt$ the tangent cone is the direction space $\cD$
(\cref{lem:product-geometry}).
\end{proof}

Because $\delta_\sigma'(\bar Z;\cdot)$ is linear at interior endpoints and
vanishes on $\cD$, condition (i) implies the quotient bound
\begin{equation}\label{eq:quotient-bound}
 \normF{\delta_\sigma'(\bar Z;H)}\ \ge\ \mu_{\mathrm{ri}}\,\dist_F(H,\cD)
 \qquad(\bar Z\in\ri\Zopt,\ H\in\Sym^p),
\end{equation}
obtained by applying the definition of $\mu_{\mathrm{ri}}$ to the
component of $H$ orthogonal to $\cD$.

\subsection{Uniformly bounded normal decompositions}
\label{app:normal-bound-proof}

\begin{lemma}[Uniformly bounded normal decompositions]\label{lem:normal-bound}
For every endpoint $\bar Z=\bar X-\sigma\bar S$, every
$H\in N_{\Zopt}(\bar Z)$, and every decomposition
$H=Y-W_U+W_V$ with $Y\in\cD^\perp$, $W_U\in\cK(\bar X)$,
$W_V\in\cK(\bar S)$ as in \eqref{eq:shadow-normal},
\begin{equation}\label{eq:explicit-normal-bound}
 \normF{W_U}\le\frac{R_X}{a_X}\normF H,\qquad
 \normF{W_V}\le\frac{R_S}{a_S}\normF H,\qquad
 \normF Y\le\Bigl(1+\frac{R_X}{a_X}+\frac{R_S}{a_S}\Bigr)\normF H .
\end{equation}
The three summands need not be orthogonal, and the decomposition need
not be unique; the bounds hold for every choice.
\end{lemma}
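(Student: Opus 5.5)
The plan is to test the decomposition $H=Y-W_U+W_V$ against the two fixed anchor displacements $\Xsc-\bar X$ and $\Ssc-\bar S$. Each test pairing isolates exactly one kernel summand. The anchor spectral gap then bounds that summand from below, while Cauchy--Schwarz together with the radii \eqref{eq:anchor-constants} bounds the same pairing from above. No orthogonality of the three summands and no uniqueness of the decomposition is used, only the supports of the summands.

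\emph{Step 1 (primal kernel part).} By \cref{lem:product-geometry}, $\Xsc-\bar X\in\spanof(\Xopt-\Xopt)=\cM\subseteq\cD$, so $\ip Y{\Xsc-\bar X}=0$. Next, $W_V$ is supported on $K_{\bar S}\subseteq V$ and $\Xsc-\bar X$ is supported on $U=V^\perp$, so $\ip{W_V}{\Xsc-\bar X}=0$. Finally, $\ran W_U\subseteq K_{\bar X}=\ker\bar X\cap U$ gives $\ip{W_U}{\bar X}=0$. Hence
\[
 \ip H{\Xsc-\bar X}=-\ip{W_U}{\Xsc}.
\]
Since $\ran\Xsc=U\supseteq K_{\bar X}$, \cref{lem:pv-bound} yields $\Xsc\succeq a_XP_U$. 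Because $W_U\succeq0$ is supported on $U$, this gives $\ip{W_U}{\Xsc}\ge a_X\tr W_U\ge a_X\normF{W_U}$. On the other side, $|\ip H{\Xsc-\bar X}|\le R_X\normF H$. Combining the two bounds gives $\normF{W_U}\le (R_X/a_X)\normF H$.

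\emph{Step 2 (dual kernel part).} The argument is symmetric. We have $\Ssc-\bar S\in\cL\subseteq\cD$, which is orthogonal to $Y$. Also $W_U$ is supported on $U$ while $\Ssc-\bar S$ is supported on $V$, and $\ip{W_V}{\bar S}=0$ because $\ran W_V\subseteq\ker\bar S$. Therefore $\ip H{\Ssc-\bar S}=\ip{W_V}{\Ssc}\ge a_S\normF{W_V}$, using $\Ssc\succeq a_SP_V$ from \cref{lem:pv-bound} (this requires $\ran\Ssc=V$, as stressed in \cref{rem:pv}). Bounding the left side by $R_S\normF H$ gives the second estimate.

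\emph{Step 3.} The bound on $Y$ is the triangle inequality applied to $Y=H+W_U-W_V$ together with Steps 1--2.

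The only delicate point is the lower bound $\ip{W_U}{\Xsc}\ge a_X\normF{W_U}$. It needs the fixed anchor's full range on $U$, not merely positivity of $\Xsc$ on the moving kernel $K_{\bar X}$. It also needs the estimate $\tr W\ge\normF W$ for PSD $W$. Everything else is a support computation. Degenerate cases such as $K_{\bar X}=\{0\}$, where $W_U=0$, are covered by the same identities.
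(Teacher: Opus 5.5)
Your proposal is correct and follows the paper's proof essentially line for line. Like the paper, you pair $H$ with the anchor displacements $\Xsc-\bar X\in\cM$ and $\Ssc-\bar S\in\cL$, kill $Y$ and the opposite-support kernel term by orthogonality of supports, reduce via $\bar XW_U=0$ (resp.\ $\bar SW_V=0$) to $\ip{W_U}{\Xsc}$ (resp.\ $\ip{W_V}{\Ssc}$), bound these below using $\Xsc\succeq a_XP_U$ from \cref{lem:pv-bound} together with $\tr W\ge\normF W$, and finish with the triangle inequality. (Cosmetically, $\Xsc\succeq a_XP_U$ needs only $U=\ran\Xsc$; the inclusion $K_{\bar X}\subseteq U$ plays no role there.)
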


\begin{proof}
Since $W_U\succeq0$, $\tr W_U\ge\normF{W_U}$. We claim
\[
\begin{aligned}
 a_X\normF{W_U}&\le a_X\tr W_U\le\ip{W_U}{\Xsc}=\ip{W_U}{\Xsc-\bar X}\\
 &=-\ip{H}{\Xsc-\bar X}\le\normF H\,\normF{\Xsc-\bar X}\le R_X\normF H .
\end{aligned}
\]
The second inequality holds because $W_U$ is supported on $U$ and
$\Xsc\succeq a_XP_U$ (\cref{lem:pv-bound}, as $\ran\Xsc=U$). The first
equality uses $\bar XW_U=0$. The second equality uses
$Y\perp\cD\ni\Xsc-\bar X$, and that $W_V$ is supported on $V$ while
$\Xsc-\bar X$ is supported on $U$.
Symmetrically, $\Ssc\succeq a_SP_V$, $\bar SW_V=0$, and
$\Ssc-\bar S\in \cL\subseteq\cD$ give
$a_S\normF{W_V}\le\ip{W_V}{\Ssc-\bar S}=\ip H{\Ssc-\bar S}\le R_S\normF H$;
the sign differs because the fixed-point variable carries $-\sigma S$. The
bound on $Y=H+W_U-W_V$ follows by the triangle inequality.
\end{proof}

\subsection{The interior modulus forces the kernel identities:
(i)\texorpdfstring{$\Rightarrow$}{=>}(ii)}
\label{app:rigidity-tests}

\begin{proof}[Proof of \textup{(i)}$\Rightarrow$\textup{(ii)}]
We prove both identities at \emph{every} optimal factor, which is stronger
than required. The inclusions ``$\supseteq$'' are immediate as in
\cref{lem:automatic-interior}.

\emph{Primal identity.} Fix $\bar S\in\Sopt$ and take $E\in\cN$ with
$E_{VV}=0$ and $E\bar S=0$; write $B:=E_{UV}$, so $B\bar S_{VV}=0$. For
$0<\eps\le1$ set $\bar S_\eps:=(1-\eps)\bar S+\eps\Ssc$ and
$Z_\eps:=\Xsc-\sigma\bar S_\eps$. Then $\bar S_\eps\succeq\eps a_SP_V$ is
positive definite on $V$, so $Z_\eps\in\ri\Zopt$ by
\cref{lem:product-geometry}. Let $Y_\eps$ be the cross block of
$J_{\Xsc,\bar S_\eps}(E)$, which by \eqref{eq:coarse-derivative} solves
$\Xsc|_U\,Y_\eps+\sigma Y_\eps(\bar S_\eps)_{VV}=\Xsc|_U\,B$. Subtracting
the same expression evaluated at $B$ and using $B\bar S_{VV}=0$ gives the
exact identity
\begin{equation}\label{eq:anchored-primal-sylvester}
 \Xsc|_U\,(Y_\eps-B)+\sigma(Y_\eps-B)(\bar S_\eps)_{VV}
 =-\sigma\eps\,B\,\Ssc|_V .
\end{equation}
Pairing with $Y_\eps-B$ and using $(\bar S_\eps)_{VV}\succeq0$ yields
$a_X\normF{Y_\eps-B}^2\le\sigma\eps\normF{Y_\eps-B}\normF{B\,\Ssc|_V}$,
so $\normF{Y_\eps-B}\le(\sigma\eps/a_X)\normF{B\,\Ssc|_V}\to0$ and
$J_{\Xsc,\bar S_\eps}(E)\to E$ (its $UU$ block is $E_{UU}$ and its $VV$
block is zero, as is $E_{VV}$). Since $\Prow E=0$,
\eqref{eq:res-deriv} becomes
$\delta_\sigma'(Z_\eps;E)=(\Pnull-\Prow)\bigl(J_{\Xsc,\bar S_\eps}(E)-E\bigr)$,
and $\Pnull-\Prow$ is an orthogonal reflection, so
$\normF{\delta_\sigma'(Z_\eps;E)}=\normF{J_{\Xsc,\bar S_\eps}(E)-E}\to0$.
The quotient bound \eqref{eq:quotient-bound} forces $\dist_F(E,\cD)=0$.
Writing $E=M+\Lambda$ with $M\in\cM$, $\Lambda\in \cL$, the condition
$E_{VV}=0$ gives $\Lambda=0$, so $E\in\cM$.

\emph{Dual identity.} Fix $\bar X\in\Xopt$ and take $F\in\cR$ with
$F_{UU}=0$ and $\bar XF=0$; write $B:=F_{UV}$, so $\bar X_{UU}B=0$. Set
$X_\eps:=(1-\eps)\bar X+\eps\Xsc$ and $Z_\eps:=X_\eps-\sigma\Ssc\in\ri\Zopt$.
The cross block $Y_\eps$ of $J_{X_\eps,\Ssc}(F)$ solves
$(X_\eps)_{UU}Y_\eps+\sigma Y_\eps\,\Ssc|_V=(X_\eps)_{UU}B=\eps\,\Xsc|_U\,B$,
so \cref{lem:sylvester}(b) gives
$\normF{Y_\eps}\le\eps\normF{\Xsc|_UB}/(\sigma a_S)\to0$, and
$J_{X_\eps,\Ssc}(F)\to0$ because its $UU$ block $F_{UU}$ and its $VV$
block vanish identically. Since $\Pnull F=0$,
$\delta_\sigma'(Z_\eps;F)=-(\Prow-\Pnull)J_{X_\eps,\Ssc}(F)\to0$, and
\eqref{eq:quotient-bound} gives $F\in\cD$; the condition $F_{UU}=0$ kills
the $\cM$ component, so $F\in \cL$.
\end{proof}

\subsection{The kernel identities force the error bound:
(ii)\texorpdfstring{$\Rightarrow$}{=>}(iii)}
\label{app:rigidity-eb}

\begin{proof}[Proof of \textup{(ii)}$\Rightarrow$\textup{(iii)}]
By \cref{lem:automatic-interior}, (ii) gives both identities at every
optimal factor. Suppose \eqref{eq:canonical-eb} fails. Since
$\delta_\sigma$ vanishes on $\Zopt$, there are $Z_k\notin\Zopt$ with
$0<t_k:=\dist_F(Z_k,\Zopt)\le1/k$ and
$\normF{\delta_\sigma(Z_k)}/t_k<1/k$. Let $\bar Z_k=\bar X_k-\sigma\bar
S_k$ be the point of the compact convex set $\Zopt$ nearest to $Z_k$, and
put $H_k:=(Z_k-\bar Z_k)/t_k$, so that $\normF{H_k}=1$,
$H_k\in N_{\Zopt}(\bar Z_k)$, and $Z_k=\bar Z_k+t_kH_k$. Define the
\emph{exact} projections and secants
\begin{equation}\label{eq:secants}
 X_k:=\Pi_+(Z_k),\quad S_k:=-\sigma^{-1}\Pi_-(Z_k),\quad
 E_k:=\frac{X_k-\bar X_k}{t_k},\quad
 F_k:=-\sigma\,\frac{S_k-\bar S_k}{t_k}.
\end{equation}
The Moreau decomposition gives $E_k+F_k=H_k$, and nonexpansiveness of
$\Pi_\pm$ gives $\normF{E_k},\normF{F_k}\le1$. After a subsequence,
$(\bar X_k,\bar S_k,E_k,F_k,H_k)\to(\bar X,\bar S,E,F,H)$ with $E+F=H$ and
$\normF H=1$. These are moving-base secants; no derivative at a limiting
endpoint is used anywhere below.

\emph{Residual components and the energy identity.} Since
$\delta_\sigma(\bar Z_k)=0$, \eqref{eq:res-components} gives
$\delta_\sigma(Z_k)/t_k=-\Prow E_k-\Pnull F_k$, whose two terms are
orthogonal; hence
\begin{equation}\label{eq:small-components}
 \Prow E_k\to0,\qquad \Pnull F_k\to0,\qquad E\in\cN,\qquad F\in\cR,
\end{equation}
and $\ip{E_k}{F_k}=\ip{\Prow E_k}{F_k}+\ip{E_k}{\Pnull F_k}\to0$. Exact
complementarity $X_kS_k=0$ and $\bar X_k\bar S_k=0$, expanded through
\eqref{eq:secants}, yields the energy identity
\begin{equation}\label{eq:secant-energy}
 \ip{E_k}{F_k}
 =\frac{\sigma}{t_k^2}\Bigl(\ip{X_k}{\bar S_k}+\ip{\bar X_k}{S_k}\Bigr)\ \ge0;
\end{equation}
both of its terms are nonnegative, so
\begin{equation}\label{eq:gaps}
 \ip{X_k}{\bar S_k}=o(t_k^2),\qquad \ip{\bar X_k}{S_k}=o(t_k^2).
\end{equation}

\emph{Step 1: the wrong-face diagonal blocks vanish.} Optimal primal
matrices are supported on $U$ and optimal slacks on $V$, so
$(E_k)_{VV}=(X_k)_{VV}/t_k\succeq0$ and
$(F_k)_{UU}=-\sigma(S_k)_{UU}/t_k\preceq0$. Since
$\Ssc-\bar S_k\in \cL\subseteq\cR$ and $\Xsc-\bar X_k\in\cM\subseteq\cN$,
\[
 \ip{E_k}{\Ssc}=\ip{\Prow E_k}{\Ssc-\bar S_k}+\frac{\ip{X_k}{\bar S_k}}{t_k}\to0,
 \qquad
 \ip{F_k}{\Xsc}=\ip{\Pnull F_k}{\Xsc-\bar X_k}-\frac{\sigma\ip{\bar X_k}{S_k}}{t_k}\to0,
\]
by compactness, \eqref{eq:small-components}, and \eqref{eq:gaps}. The
fixed anchors now give
$a_S\normF{(E_k)_{VV}}\le\ip{(E_k)_{VV}}{\Ssc|_V}=\ip{E_k}{\Ssc}\to0$ and
$a_X\normF{(F_k)_{UU}}\le-\ip{(F_k)_{UU}}{\Xsc|_U}=-\ip{F_k}{\Xsc}\to0$,
using $\tr A\ge\normF A$ for PSD $A$; hence
\begin{equation}\label{eq:wrong-blocks}
 (E_k)_{VV}\to0,\qquad (F_k)_{UU}\to0,\qquad E_{VV}=0,\qquad F_{UU}=0 .
\end{equation}

\emph{Step 2: annihilation without spectral gaps.} All of
$X_k,S_k,\bar X_k,\bar S_k$ are bounded. \Cref{lem:psd-product} and
\eqref{eq:gaps} give $\normF{X_k\bar S_k}^2=o(t_k^2)$ and
$\normF{\bar X_kS_k}^2=o(t_k^2)$, and $\bar X_k\bar S_k=0$ turns these
into $E_k\bar S_k=X_k\bar S_k/t_k\to0$ and
$\bar X_kF_k=-\sigma\bar X_kS_k/t_k\to0$. In the limit,
\begin{equation}\label{eq:annihilation}
 E\bar S=0,\qquad \bar XF=0 .
\end{equation}
No lower bound on the positive eigenvalues of $\bar X_k$ or $\bar S_k$ has
been used.

\emph{Step 3: the limits lie in the direction spaces.} By
\eqref{eq:small-components}, \eqref{eq:wrong-blocks}, and
\eqref{eq:annihilation}, $E\in\cN$ with $E_{VV}=0$ and $E\bar S=0$, and
$F\in\cR$ with $F_{UU}=0$ and $\bar XF=0$. The identities
\eqref{eq:kernel-tests} at $\bar S$ and $\bar X$ give $E\in\cM$, $F\in \cL$,
hence $H=E+F\in\cD$.

\emph{Step 4: normality contradicts the secant signs.} Choose any
decomposition $H_k=Y_k-W_{U,k}+W_{V,k}$ as in \eqref{eq:shadow-normal};
all three sequences are bounded by \cref{lem:normal-bound}. Since $H_k\to H\in\cD$
and $Y_k\perp\cD$, $\ip{H_k}{Y_k}=\ip{H_k-H}{Y_k}\to0$. The kernel
supports give $\bar X_kW_{U,k}=0$ and $\bar S_kW_{V,k}=0$. By
\eqref{eq:secants} and positivity,
\[
 \ip{E_k}{W_{U,k}}=t_k^{-1}\ip{X_k}{W_{U,k}}\ge0,\qquad
 \ip{F_k}{W_{V,k}}=-\sigma t_k^{-1}\ip{S_k}{W_{V,k}}\le0 .
\]
The cross pairings vanish by \eqref{eq:wrong-blocks}:
$|\ip{F_k}{W_{U,k}}|=|\ip{(F_k)_{UU}}{(W_{U,k})_{UU}}|\to0$ and
$|\ip{E_k}{W_{V,k}}|=|\ip{(E_k)_{VV}}{(W_{V,k})_{VV}}|\to0$. Therefore
\[
 1=\normF{H_k}^2=\ip{H_k}{Y_k}-\ip{H_k}{W_{U,k}}+\ip{H_k}{W_{V,k}}
 \le\ip{H_k}{Y_k}-\ip{F_k}{W_{U,k}}+\ip{E_k}{W_{V,k}}\longrightarrow0,
\]
a contradiction.
\end{proof}

\subsection{Normal modulus and exact linearization:
(iii)\texorpdfstring{$\Rightarrow$}{=>}(iv)\texorpdfstring{$\Rightarrow$}{=>}(i)
and (iii)\texorpdfstring{$\Rightarrow$}{=>}(v)\texorpdfstring{$\Rightarrow$}{=>}(ii)}
\label{app:rigidity-linearization}

\begin{proof}[Proof of \textup{(iii)}$\Rightarrow$\textup{(iv)}$\Rightarrow$\textup{(i)}]
For $\bar Z\in\Zopt$ and a unit $H\in N_{\Zopt}(\bar Z)$, the projection
characterization of closed convex sets gives
$\dist_F(\bar Z+tH,\Zopt)=t$ for $t\ge0$. For $0<t\le t_\sigma$,
\eqref{eq:canonical-eb} and $\delta_\sigma(\bar Z)=0$ give
$\normF{\delta_\sigma(\bar Z+tH)}/t\ge c_\sigma$, and directional
differentiability lets $t\downarrow0$: $\normF{\delta_\sigma'(\bar Z;H)}
\ge c_\sigma$, so $\mu_{\mathrm{nor}}\ge c_\sigma>0$. On $\ri\Zopt$ the
normal cone is $\cD^\perp$ (\cref{prop:cones}), so
$\mu_{\mathrm{ri}}\ge\mu_{\mathrm{nor}}>0$.
\end{proof}

\begin{proof}[Proof of \textup{(iii)}$\Rightarrow$\textup{(v)}]
We prove $\cC(\bar Z)=\Tan_{\Zopt}(\bar Z)$ at \emph{every} $\bar Z\in\Zopt$.
The inclusion $\Tan_{\Zopt}(\bar Z)\subseteq\cC(\bar Z)$ needs no
hypothesis. Let $t_k\downarrow0$ and $H_k\to H$ with
$\bar Z+t_kH_k\in\Zopt$. Since $\delta_\sigma$ is Lipschitz with some
constant $L_\sigma$ and vanishes on $\Zopt$,
$\normF{\delta_\sigma(\bar Z+t_kH)}\le L_\sigma t_k\normF{H-H_k}=o(t_k)$.
The directional derivative exists, so it must be zero. Conversely, if
$\delta_\sigma'(\bar Z;H)=0$ then $\normF{\delta_\sigma(\bar Z+tH)}=o(t)$.
For small $t$, $\dist_F(\bar Z+tH,\Zopt)\le t\normF H\le t_\sigma$, so
\eqref{eq:canonical-eb} applies and gives
$\dist_F(\bar Z+tH,\Zopt)\le c_\sigma^{-1}\normF{\delta_\sigma(\bar Z+tH)}=o(t)$.
With $Z_t$ a nearest point, $(Z_t-\bar Z)/t\to H$, so
$H\in\Tan_{\Zopt}(\bar Z)$.
\end{proof}

\begin{proof}[Proof of \textup{(v)}$\Rightarrow$\textup{(ii)}]
By \cref{lem:automatic-interior} the identities hold at interior factors,
so fix $\bar S\in\rb\Sopt$ and $E\in\cN$ with $E_{VV}=0$ and $E\bar S=0$.
The primal-anchored endpoint $\bar Z:=\Xsc-\sigma\bar S$ lies in
$\rb\Zopt$ (\cref{lem:product-geometry}). Apply the coarse form
\eqref{eq:coarse-derivative} in case (b). The $UU$ block of
$\Pi_+'(\bar Z;E)$ is $E_{UU}$. The $VV$ block is the positive part of the
$K_{\bar S}K_{\bar S}$ block of $E_{VV}=0$, hence zero. The cross block $Y$
solves $\Xsc|_U\,Y+\sigma Y\bar S_{VV}=\Xsc|_U\,E_{UV}$; since
$E_{UV}\bar S_{VV}=0$, the unique solution is $Y=E_{UV}$. Thus
$\Pi_+'(\bar Z;E)=E$, and $\delta_\sigma'(\bar Z;E)=-\Prow E=0$ because
$E\in\cN$. Hence $E\in\cC(\bar Z)=\Tan_{\Zopt}(\bar Z)\subseteq\cD$, and
$E_{VV}=0$ gives $E\in\cM$.

Dually, fix $\bar X\in\rb\Xopt$ and $F\in\cR$ with $F_{UU}=0$ and
$\bar XF=0$, and consider $\bar Z:=\bar X-\sigma\Ssc\in\rb\Zopt$. In case
(c) of \eqref{eq:coarse-derivative}, the $UU$ block of $\Pi_+'(\bar Z;F)$
is the positive part of the $K_{\bar X}K_{\bar X}$ block of $F_{UU}=0$,
hence zero, and the $VV$ block is zero. The cross block solves
$\bar X_{UU}Y+\sigma Y\,\Ssc|_V=\bar X_{UU}F_{UV}=0$, so $Y=0$ by
\cref{lem:sylvester}(b). Thus $\Pi_+'(\bar Z;F)=0$ and
$\delta_\sigma'(\bar Z;F)=-\Pnull F=0$, so
$F\in\cC(\bar Z)=\Tan_{\Zopt}(\bar Z)\subseteq\cD$, and $F_{UU}=0$ gives
$F\in \cL$.
\end{proof}

\subsection{Examples}
\label{app:not-abstract}

\begin{example}[A firmly nonexpansive map with a flat boundary
normal]\label{ex:not-abstract}
Let $\psi(s)=s^2/(1+s)$ for $s\ge0$ and
\[
 q(t)=\begin{cases}t+\psi(-t),&t<0,\\ t,&0\le t\le1,\\
 t-\psi(t-1),&t>1,\end{cases}
\]
a nondecreasing $1$-Lipschitz function, so $G(t,s):=(q(t),0)$ is firmly
nonexpansive on $\R^2$ with $\Fix G=[0,1]\times\{0\}$. Along
relative-interior normals $(0,h)$ the residual has derivative $(0,-h)$, so
the relative-interior modulus equals one. Yet at the boundary point $(1,0)$
with outward normal $(1,0)$,
$(G-\Id)(1+t,0)=\bigl(-t^2/(1+t),\,0\bigr)$, so the residual vanishes to
first order. Rigidity fails for this map: the implication
(i)$\Rightarrow$(iv) is a statement about PSD geometry, not about averaged
maps.
\end{example}

\begin{example}[Two disks: both identities are needed]\label{ex:two-disks}
Let $p=4$, $U=\spanof\{e_1,e_2\}$, $V=\spanof\{e_3,e_4\}$, and
\[
 \min_{X\succeq0}\ \tfrac12(X_{33}+X_{44})\quad\text{s.t.}\quad
 \tr X=1,\ X_{33}-X_{44}=0,\ X_{34}=0,\ X_{13}-X_{24}=0,\ X_{14}+X_{23}=0 .
\]
The constraints are independent, and $I_4/4$ is strictly feasible. The
optimal value is $0$. On the primal side, the optimal value forces
$X_{33}=X_{44}=0$, so rows $3,4$ of a PSD optimal $X$ vanish and
$\Xopt=\{\bigl[\begin{smallmatrix}A&0\\0&0\end{smallmatrix}\bigr]:A\succeq0,\ \tr A=1\}$.
On the dual side, the trace multiplier equals the dual objective, which is
zero at optimality. Hence the $UU$ slack block vanishes, and PSD-ness kills
the cross block. The two constraints on the $VV$ block allow every
traceless change to $\tfrac12I_2$, so
$\Sopt=\{\bigl[\begin{smallmatrix}0&0\\0&B\end{smallmatrix}\bigr]:B\succeq0,\ \tr B=1\}$.
Both optimal sets are two-dimensional disks with rank-one boundaries. The
anchors are $\Xsc=\tfrac12P_U$ and $\Ssc=\tfrac12P_V$, and $\cM$, $\cL$
are the traceless parts of $\Sym(U)$ and $\Sym(V)$. The cross block of a matrix in
$\cN$ has the form $\bigl[\begin{smallmatrix}a&b\\-b&a\end{smallmatrix}\bigr]$
and that of a matrix in $\cR$ the form
$\bigl[\begin{smallmatrix}a&b\\b&-a\end{smallmatrix}\bigr]$; both are
invertible unless zero. If $E\in\cN$ has $E_{VV}=0$ and $E\bar S=0$, then
$E_{UV}\bar S_{VV}=0$ with $\bar S_{VV}\ne0$ (trace one), so $E_{UV}=0$ and
$E\in\cM$; symmetrically $\bar X_{UU}F_{UV}=0$ forces $F\in \cL$. Thus
\eqref{eq:kernel-tests} holds at every endpoint, including those at which
\emph{both} factors have rank one, and \cref{thm:rigidity} applies for
every $\sigma>0$ although neither optimum is unique.
\end{example}

\begin{remark}[What rigidity does and does not assert]\label{rem:rigidity-scope}
The proof gives positive constants $\mu_{\mathrm{nor}},c_\sigma,t_\sigma$
for the fixed instance, but does not identify them with $\mu_{\mathrm{ri}}$
or furnish a uniform rate over objectives. The proof of
(ii)$\Rightarrow$(iii) is not a uniform Taylor expansion across singular
endpoints. It exploits exact PSD complementarity of the projection
secants. The uniform spectral estimates use the fixed anchor gaps, and no
estimate divides by eigenvalues of the moving optima; the remaining
constants depend on the fixed problem data and the compact optimal sets. Likewise, by \cref{lem:automatic-interior}
every relative-interior derivative $\delta_\sigma'(\bar Z;\cdot)$ has
kernel exactly $\cD$, that is, it is injective on $\cD^\perp$. This
pointwise statement is weaker than the uniform bound
$\mu_{\mathrm{ri}}>0$: the tied-objective instance of
\cref{app:tied-objectives} satisfies \cref{ass:two-sided} and yet has
$\mu_{\mathrm{ri}}=0$.
\end{remark}

\subsection{Relation to second-order limit dynamics}
\label{app:limit-dynamics}

The stall cone $\cC(\bar Z)$ of \eqref{eq:stall-cone} is the central
object of the companion analysis~\cite{kang26arxiv-admmsdp-limitdyn},
which studies ADMM near an arbitrary KKT point under the same
strict-complementarity assumption. There, $\cC(\bar Z)$ is the fixed-point
set of the local first-order dynamics. Every direction
$H\in\cC(\bar Z)\setminus\Tan_{\Zopt}(\bar Z)$ drives a second-order limit
drift $\phi(\bar Z;H)\ne0$ (its Proposition~6). Along such a direction the
residual decays like $t^2$ while the distance to the optimal set stays of
order $t$; these are the slow regions seen in practice. That paper proves that a local linear
growth condition at $\bar Z$ forces $\cC(\bar Z)=\Tan_{\Zopt}(\bar Z)$
(its Corollary~1(3)) and conjectures that the equality is necessary for
local linear convergence. \Cref{thm:rigidity} supplies the converse in the
setwise form: the equality at every endpoint of a compact fixed-point set
is \emph{equivalent} to the uniform error bound (iii). In words, benign
degeneracy means that no endpoint carries a direction of
$\cC(\bar Z)\setminus\Tan_{\Zopt}(\bar Z)$, along which the residual is of
second order while the distance is of first order. \Cref{sec:boolean} shows that exact
Boolean relaxations have none, and \cref{rem:tied-objectives} exhibits one
for nonconstant full-order Boolean objectives with tied minimizers.

%% file: sections/app_modulus.tex
\section{The Boolean Modulus}
\label{app:modulus}

\subsection{Proof of the subspace identities}
\label{app:identities-proof}

\begin{proof}[Proof of \cref{lem:boolean-geometry}]
In coordinates whose first vector is $q$, matrices of $\Sym(V)$ have zero
first row and column, so $\Sym(V)^\perp$ consists of the matrices with zero
$VV$ block, which are $\{qv^\top+vq^\top\}$; this is
\eqref{eq:Fq-perp}. If such a matrix lies in $\cT_d$, its diagonal
vanishes; the diagonal entries are $2q_Av_A$ and every
$q_A=\pm N^{-1/2}\ne0$, so $v=0$, proving
\eqref{eq:transverse-intersection}. For \eqref{eq:boolean-L}, the Boolean
moment identity is
\begin{equation}\label{eq:boolean-lift}
 uu^\top=\sum_{\Gamma\in\cI_{2d}}x_\star^\Gamma A_\Gamma
        =I_N+\sum_{\Gamma\ne\emptyset}x_\star^\Gamma A_\Gamma ,
\end{equation}
since $(uu^\top)_{A,B}=x_\star^Ax_\star^B=x_\star^{A\triangle B}$. If
$H\in\Sym(V)\cap\cR_d$, then $Hu=0$ and $H\perp\cT_d$, so
$0=\ip{uu^\top}H=\tr H$ by \eqref{eq:boolean-lift}; hence $H\perp I_N$ as
well and $H\in\cH_d^\perp$. The reverse inclusion is immediate because
$\cT_d\subseteq\cH_d$. By \eqref{eq:directions}, $\Sym(V)\cap\cR_d=\cL$;
that $\cL$ also equals $\spanof(\Sopt-\Sopt)$ is \cref{lem:product-geometry}
under the instantiation of \cref{sec:boolean}. Taking orthogonal
complements gives
$\cL^\perp=\Sym(V)^\perp+\cH_d=\Sym(V)^\perp+\cT_d$ (subspace sums, not
orthogonal sums), the last equality because
$I_N=uu^\top-\sum_{\Gamma\neq\emptyset}x_\star^\Gamma A_\Gamma$ and
$uu^\top=N qq^\top\in\Sym(V)^\perp$.
\end{proof}

This resolves a distinction worth recording: the primal equality row space
is $\cT_d^\perp$, while the kernel of the Gram-to-polynomial map is
$\cH_d^\perp$. They differ, but their intersections with $\Sym(q^\perp)$
agree.

\subsection{Derivative identifications for the extension}
\label{app:deriv-id}

Fix $\sigma>0$ and $S\in\Sopt$, and let $P_K:=Q_KQ_K^\top$ be the
orthogonal projector onto $K:=\ker(S)\cap V$, with $Q_K$ from
\eqref{eq:eigen-chart}. Recall $\Jext_S$ and $\Psi_S$ from
\eqref{eq:Jextension}. We prove the two identifications used in
\cref{sec:boolean}:
\begin{equation}\label{eq:interior-derivative}
 S\in\ri\Sopt:\qquad
 \Pi_+'(X_\star-\sigma S;\cdot)=\Jext_S,
 \qquad
 \delta_\sigma'(X_\star-\sigma S;\cdot)=-\Psi_S ,
\end{equation}
\begin{equation}\label{eq:true-boundary-derivative}
 S\in\Sopt\ \text{arbitrary},\ K=\ker(S)\cap V:\quad
 \Pi_+'(X_\star-\sigma S;H)
 =\Jext_S(H)+Q_K\,\Pi_+(Q_K^\top HQ_K)\,Q_K^\top .
\end{equation}

\begin{proof}
Apply \eqref{eq:positive-deriv} at $\bar Z=X_\star-\sigma S$ with the
Boolean instantiation of \cref{sec:boolean}: $U=\spanof\{q\}$ is
one-dimensional, so $\Lambda_+=(N)$ in \eqref{eq:eigen-chart} and
$\Omega_{1j}=N/(N+\sigma s_j)$. The $UU$ block
contributes $h_{00}qq^\top$ with $h_{00}=q^\top Hq$. The cross blocks
from $U$ to $K$ and to $\ran S$ assemble into $qw^\top+wq^\top$ with
\[
 w=\sum_{j}\frac{N}{N+\sigma s_j}\,\theta_j\theta_j^\top Hq
  \;+\;P_KHq
  \;=\;N(NI_N+\sigma S)^{-1}P_VHq ,
\]
where the $\theta_j$ are the columns of $Q_-$ in \eqref{eq:eigen-chart}.
The identity holds since $(NI_N+\sigma S)^{-1}$ acts on $K$ as $N^{-1}P_K$
and on the eigenvector $\theta_j$ of $S$ as $(N+\sigma s_j)^{-1}$. Note
$w\perp q$. The
$VV$ block of \eqref{eq:positive-deriv} is
$Q_K\Pi_+(Q_K^\top HQ_K)Q_K^\top$, which vanishes when $K=\{0\}$, i.e.\ when
$S\in\ri\Sopt$ by \cref{lem:product-geometry}. This proves
\eqref{eq:true-boundary-derivative}, and \eqref{eq:interior-derivative}
follows because at interior certificates $X_\star-\sigma S$ is nonsingular,
$\Pi_+$ is differentiable there, and
$\delta_\sigma'=-\Prow\Pi_+'-\Pnull\Pi_-'
=-\bigl(\Prow\Jext_S+\Pnull(\Id-\Jext_S)\bigr)=-\Psi_S$ by
\eqref{eq:res-deriv}.
\end{proof}

\subsection{Proof of the uniform modulus}
\label{app:modulus-proof}

\begin{proof}[Proof of \cref{prop:boolean-modulus}]
Suppose $\Psi_S(H)=0$ and put $E:=\Jext_S(H)$, $F:=H-E$. The two residual
components vanish separately, so $E\in\cT_d$ and $F\in\cR_d$. But
$E\in\Sym(V)^\perp$ by construction, so the constant-diagonal identity
\eqref{eq:transverse-intersection} forces
\[
 E\ \in\ \cT_d\cap\Sym(V)^\perp=\{0\}.
\]
Multiplying $E=0$ by $q$ gives $h_{00}=0$ and $w=0$; the inverse in
\eqref{eq:Jextension} is nonsingular, so $P_VHq=0$, and with $q^\top Hq=0$
this yields $Hq=0$, i.e.\ $H\in\Sym(V)$. Then $F=H\in\cR_d$, so
$H\in\Sym(V)\cap\cR_d=\cL$. Conversely $H\in \cL$ gives $\Jext_S(H)=0$ and
$\Pnull H=0$, so $\Psi_S(H)=0$. This proves $\ker(\Psi_S)=\cL$; in particular
$\sigma_{\min}(\Psi_S|_{\cL^\perp})>0$ for each $S$ by finite dimensionality.
Continuity of $S\mapsto\Psi_S$ (\cref{app:continuity} below) makes this
least singular value continuous, and compactness of $\Sopt$ gives
$\mu_\star>0$. Finally, on $\ri\Sopt$ the extension coincides with the true
derivative, which gives \eqref{eq:boolean-muri}.
\end{proof}

\subsection{Continuity and the compactness step}
\label{app:continuity}

The inverse in \eqref{eq:Jextension} satisfies
$\normop{(NI_N+\sigma S)^{-1}}\le1/N$ uniformly on $\Sopt$, so the
identity $B^{-1}-A^{-1}=B^{-1}(A-B)A^{-1}$ makes
$S\mapsto\Jext_S$, hence $S\mapsto\Psi_S$, Lipschitz on $\Sopt$ in operator
norm. The Lipschitz bound
$|\sigma_{\min}(B_1)-\sigma_{\min}(B_2)|\le\normop{B_1-B_2}$
(\cref{app:spectral}) then makes
$S\mapsto\sigma_{\min}(\Psi_S|_{\cL^\perp})$ continuous, which is the
compactness step used in the proof of \cref{prop:boolean-modulus}. The
compactness argument applies to the continuous family
$\{\Psi_S\}_{S\in\Sopt}$ on the closed set $\Sopt$;
\eqref{eq:interior-derivative} identifies this family with the true
derivative on $\ri\Sopt$, which is all that \eqref{eq:boolean-muri}
requires. In the language of \cref{app:deriv-validity}, $\Psi_S$ is the
primal-anchored continuous extension of the interior derivative; its dual
twin, anchored at $\Ssc$ with a varying primal optimum, is trivial here
because $\Xopt$ is a point.

\subsection{Tied minimizers: the error bound fails at full order}
\label{app:tied-objectives}

This section proves the claims of \cref{rem:tied-objectives}. Work at
full order $d=n$, where $N=2^n$ and the normalized evaluation vectors
$b_n(a)/\sqrt N$, $a\in\{\pm1\}^n$, form an orthonormal basis
(\cref{app:full-order}); let $Q$ be the orthogonal matrix with these
columns and transform all data by $Q$. Every XOR-class matrix $A_\Gamma$
is diagonal in this basis. Hence the feasible set becomes the diagonal
matrices $\diag(z)$ with $\sum_az_a=N$, and the moment matrix of a
distribution $\pi$ on the cube is $\diag(N\pi)$. The objective becomes
$\sum_a\pi_af(a)$, and the data of \cref{sec:rigidity} are
\[
\begin{gathered}
 \cN=\{\text{traceless diagonal matrices}\},\qquad
 \cR=\{\text{off-diagonal symmetric matrices}\}\oplus\spanof\{I_N\},\\
 X_b=I_N,\qquad \widehat C=\diag\bigl(f(a)/N\bigr).
\end{gathered}
\]
Let $f$ be nonconstant with minimizers $a\ne b$ and a nonminimizer $c$,
and write $f_\star:=\min f$. Then $\Xopt$ contains every $\diag(N\pi)$
with $\pi$ supported on the minimizers, so it is not a singleton. The
matrix $\bar S:=\diag\bigl((f(a')-f_\star)/N\bigr)_{a'}$ is an optimal
slack: $\bar S-\widehat C=-(f_\star/N)I_N\in\cR$, $\bar S\succeq0$, and
$\bar S$ vanishes on the minimizers. Finally, $\bigl(\Xsc,\Ssc\bigr):=
\bigl(\diag(N\pi_{\mathrm u}),\bar S\bigr)$ with $\pi_{\mathrm u}$ uniform
on the minimizers is a strictly complementary pair. Thus
\cref{ass:two-sided} holds, with $U$ spanned by the minimizer coordinates
and $V$ by the others.

\paragraph{The instance $f=1-x_1$, $n=2$.} With the minimizers ordered
first, $\Xopt=\{\diag(\alpha,4-\alpha,0,0):0\le\alpha\le4\}$,
$\Sopt=\{\diag(0,0,\tfrac12,\tfrac12)+s(e_3e_4^\top+e_4e_3^\top):|s|\le\tfrac12\}$
(the only elements of $\cR$ supported on $V$ are the off-diagonal $VV$
matrices), $\cM=\spanof\{\diag(1,-1,0,0)\}$, $\cL=\spanof\{e_3e_4^\top+e_4e_3^\top\}$.

\paragraph{The endpoint and the direction.} Take the primal optimum
$\bar X:=Ne_ae_a^\top$ (the moment matrix of the point mass at $a$), the
slack $\bar S$ above, the endpoint $\bar Z:=\bar X-\sigma\bar S$, and the
direction $H:=e_be_c^\top+e_ce_b^\top$. Then $H\in\cR$, and $H\perp\cD$ because
$H$ is a cross term between $U\ni e_b$ and $V\ni e_c$ while $\cM\subseteq
\Sym(U)$ and $\cL\subseteq\Sym(V)$. Hence the nearest point of $\aff\Zopt$ to
$\bar Z+tH$ is $\bar Z$, and $\dist_F(\bar Z+tH,\Zopt)=\normF{tH}=\sqrt2\,t$.
The only spectral block of $\bar Z$ changed by $tH$ is the $\{b,c\}$ block
$\bigl(\begin{smallmatrix}0&t\\t&-\sigma\gamma\end{smallmatrix}\bigr)$,
$\gamma:=(f(c)-f_\star)/N>0$, whose single positive eigenvalue is
\begin{equation}\label{eq:tied-lambda}
 \lambda_+(t)=\frac{\sqrt{\sigma^2\gamma^2+4t^2}-\sigma\gamma}{2}
 =\frac{t^2}{\sigma\gamma}+O(t^4),
\end{equation}
so $B(t):=\Pi_+(\bar Z+tH)-\bar X$ is rank one with $\normF{B(t)}=\lambda_+(t)$.

\paragraph{The residual.} Since $\Prow\bar X=(\tr\bar X/N)I_N=I_N=X_b$,
$\Pnull(\sigma\widehat C-\sigma\bar S)=\Pnull(\sigma f_\star I_N/N)=0$, and
$\Pnull H=0$,
\[
 \delta_\sigma(\bar Z+tH)
 =-\Prow B(t)-\Pnull\bigl(tH-B(t)\bigr)
 =(\Id-2\Prow)B(t),
\]
and $\Id-2\Prow$ is an orthogonal reflection, so
$\normF{\delta_\sigma(\bar Z+tH)}=\lambda_+(t)$ exactly. Therefore
\begin{equation}\label{eq:tied-ratio}
 \frac{\normF{\delta_\sigma(\bar Z+tH)}}{\dist_F(\bar Z+tH,\Zopt)}
 =\frac{\lambda_+(t)}{\sqrt2\,t}\longrightarrow0,
\end{equation}
and no local error bound \eqref{eq:canonical-eb} holds at $\bar Z$. For
$f=1-x_1$ one has $\gamma=\tfrac12$, which is the instance quoted in
\cref{rem:tied-objectives}.

\paragraph{Which condition fails.} The direction $H$ lies in $\cR$, has
$H_{UU}=0$, and satisfies $\bar XH=0$ (its rows avoid $a$), but
$H\notin \cL$. So the dual identity (the second identity in
\eqref{eq:kernel-tests}, which tests dual directions at a primal boundary
point) fails at $\bar X$, which is a relative-boundary point of $\Xopt$.
Consistently, let $X_\eps\in\ri\Xopt$ be the diagonal optimum with entry
$\eps$ at every minimizer other than $a$ (for $f=1-x_1$,
$X_\eps=\diag(N-\eps,\eps,0,0)$). Along the interior endpoints
$X_\eps-\sigma\bar S$, the unit direction $H/\sqrt2\in\cD^\perp$ has
$\normF{\delta_\sigma'(X_\eps-\sigma\bar S;H/\sqrt2)}=\eps/(\eps+\sigma\gamma)\to0$.
Hence $\mu_{\mathrm{ri}}=0$, although each interior derivative has kernel
exactly $\cD$. Moreover $\Pi_+'(\bar Z;H)=0$, so $\delta_\sigma'(\bar Z;H)=
-\Pnull H=0$ and $H\in\cC(\bar Z)\setminus\Tan_{\Zopt}(\bar Z)$. This is
a second-order-dominant direction in the sense of
\cite{kang26arxiv-admmsdp-limitdyn}: along it the residual is quadratic
in the distance to the optimal set.

%% file: sections/app_algorithms.tex
\section{Algorithmic Transfer}
\label{app:algorithms}

\subsection{Error bound plus averagedness: the rate lemma}
\label{app:rate-lemma}

\begin{lemma}[Setwise error bound plus averagedness]\label{lem:averaged-eb}
Let $T$ be $\alpha$-averaged, $0<\alpha<1$, in a norm $\|\cdot\|_G$ with
$g_{\min}\|\cdot\|^2\le\|\cdot\|_G^2\le g_{\max}\|\cdot\|^2$,
$0<g_{\min}\le g_{\max}$, on a finite-dimensional space, suppose $\Fix T\ne\emptyset$, and suppose
$\|Tz-z\|\ge c\,\dist(z,\Fix T)$ whenever $\dist(z,\Fix T)\le t_0$. Then
every orbit converges to a point of $\Fix T$; it eventually enters the
error-bound neighborhood, after which $\dist_G(z^k,\Fix T)$ contracts
Q-linearly and the iterates converge R-linearly.
\end{lemma}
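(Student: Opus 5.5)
The plan is the standard Fejér-monotonicity argument, carried out in the $G$-norm and converted to the Euclidean norm only where the error bound is used. Write $F:=\Fix T$, which is closed and convex because $T$ is nonexpansive in $\|\cdot\|_G$. The averagedness inequality reads
\[
 \|Tz-\bar z\|_G^2\le\|z-\bar z\|_G^2-\tfrac{1-\alpha}{\alpha}\|Tz-z\|_G^2
 \qquad(\bar z\in F).
\]
Taking $\bar z$ to be the $G$-projection of $z$ onto $F$ gives
\[
 \dist_G(Tz,F)^2\le\dist_G(z,F)^2-\tfrac{1-\alpha}{\alpha}\|Tz-z\|_G^2 .
\]
Two consequences follow. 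First, orbits are Fejér monotone with respect to $F$, hence bounded. Second, $\sum_k\|z^{k+1}-z^k\|_G^2<\infty$, so $\|Tz^k-z^k\|\to0$.

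\emph{Entrance.} I will argue that $\dist(z^k,F)\to0$. A bounded orbit has cluster points. At any cluster point $z^\infty$, continuity of $T$ and $\|Tz^k-z^k\|\to0$ give $z^\infty\in F$. Fejér monotonicity then upgrades subsequential convergence to convergence of the whole orbit to $z^\infty$, by the usual Opial-type argument in finite dimensions. In particular, there is an entrance time $k_0$ after which $\dist(z^k,F)\le t_0$. Fejér monotonicity of $\dist_G$ alone would not keep the Euclidean distance below $t_0$, since the two norms differ by a factor. Convergence to $z^\infty\in F$ avoids this issue: $\dist(z^k,F)\to0$ directly, so all late iterates stay in the tube. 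This is the step where the norm mismatch matters, and it is handled by convergence rather than monotonicity; I do not attempt to bound $k_0$.

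\emph{Rate.} For $k\ge k_0$, I chain the norm equivalences through the error bound:
\[
 \|Tz^k-z^k\|_G^2\ge g_{\min}\|Tz^k-z^k\|^2\ge g_{\min}c^2\dist(z^k,F)^2\ge\frac{g_{\min}c^2}{g_{\max}}\dist_G(z^k,F)^2 .
\]
Here $\dist(z,F)^2\ge g_{\max}^{-1}\dist_G(z,F)^2$ because the Euclidean nearest point is a candidate for the $G$-distance. Substituting into the decrease inequality gives
\[
 \dist_G(z^{k+1},F)^2\le\rho^2\dist_G(z^k,F)^2,\qquad
 \rho^2:=1-\tfrac{1-\alpha}{\alpha}\tfrac{g_{\min}}{g_{\max}}c^2 .
\]
If this quantity is nonpositive, then $\dist_G(z^{k+1},F)=0$; otherwise $\rho\in[0,1)$. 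This is the eventual Q-linear contraction.

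\emph{R-linear convergence.} I use the $G$-projection $p^k$ of $z^k$ onto $F$. Fejér monotonicity with respect to $p^k$ gives $\|z^j-p^k\|_G\le\dist_G(z^k,F)$ for all $j\ge k$. Letting $j\to\infty$ yields
\[
 \|z^k-z^\infty\|_G\le2\dist_G(z^k,F)\le2\rho^{k-k_0}\dist_G(z^{k_0},F),
\]
which is the R-linear rate, and it transfers to the Euclidean norm with the factor $g_{\min}^{-1/2}$. I expect no genuine obstacle beyond the entrance/norm bookkeeping noted above; everything else is standard averaged-operator theory.
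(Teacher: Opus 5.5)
Your proof is correct and uses the same ingredients as the paper's. These are the averagedness inequality at a $G$-nearest fixed point, the chain $\|Tz-z\|_G^2\ge g_{\min}c^2\dist(z,\Fix T)^2\ge (g_{\min}/g_{\max})c^2\dist_G(z,\Fix T)^2$ giving the same $\rho$, and the cluster-point/Fej\'er argument for global convergence. The differences are bookkeeping only. The paper first proves the local statement from the shrunken ball $\dist(z^0,\Fix T)\le t_0\sqrt{g_{\min}/g_{\max}}$, on which Fej\'er monotonicity of $\dist_G$ \emph{does} keep the Euclidean distance below $t_0$. So your remark that monotonicity alone cannot handle the norm mismatch holds only for the unshrunk tube, and the shrinking buys an entrance criterion checkable from a single iterate. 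The paper also obtains R-linear convergence from the geometrically summable step lengths $\|Tz^k-z^k\|_G\le2\dist_G(z^k,\Fix T)$, rather than from your bound $\|z^k-z^\infty\|_G\le2\dist_G(z^k,\Fix T)$.
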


We prove the following quantitative form. Under the hypotheses of
\cref{lem:averaged-eb}, an orbit starting with
$\dist(z^0,\Fix T)\le t_0\sqrt{g_{\min}/g_{\max}}$ satisfies
\begin{equation}\label{eq:rate-metric}
 \dist_G(z^k,\Fix T)\le\rho^k\dist_G(z^0,\Fix T),\qquad
 \rho^2=1-\frac{1-\alpha}{\alpha}\,\bar c^{\,2}\frac{g_{\min}}{g_{\max}}
 \ \in(0,1),
\end{equation}
for any $0<\bar c\le c$ small enough that $\rho>0$. In the Euclidean norm,
$\dist(z^k,\Fix T)\le\sqrt{g_{\max}/g_{\min}}\,\rho^k\dist(z^0,\Fix T)$,
and the iterates converge R-linearly to a point of $\Fix T$. Every orbit
converges to a fixed point and eventually enters such a neighborhood, so
the conclusions hold eventually from any initialization.

\begin{proof}
The averagedness
inequality~\cite{bauschke17springer-convex-analysis-hilbert-spaces} is
\[
 \|Tz-z_\star\|_G^2\le\|z-z_\star\|_G^2
 -\frac{1-\alpha}{\alpha}\|Tz-z\|_G^2\qquad(z_\star\in\Fix T).
\]
Taking a $G$-nearest fixed point and using norm equivalence with the error
bound gives
$\dist_G(Tz,\Fix T)^2\le\rho^2\dist_G(z,\Fix T)^2$ whenever
$\dist(z,\Fix T)\le t_0$. The initial condition gives
$\dist_G(z^0,\Fix T)\le\sqrt{g_{\min}}\,t_0$, and Fej\'er monotonicity
preserves it, so $\dist(z^k,\Fix T)\le t_0$ for every $k$; this proves
\eqref{eq:rate-metric} and its Euclidean form. Nonexpansiveness gives
$\|Tz-z\|_G\le2\dist_G(z,\Fix T)$, so the step lengths have a geometrically
summable tail and the iterates converge R-linearly to some point of
$\Fix T$. For an arbitrary starting point, averagedness gives boundedness
and square summability of $\|Tz^k-z^k\|_G$. A cluster point is fixed by
continuity. Fej\'er monotonicity relative to it gives convergence of the
whole sequence, which therefore eventually enters the neighborhood above.
\end{proof}

Q-linear contraction is asserted in the $G$ metric; norm equivalence yields
R-linear distance and iterate bounds in the original coordinates, not
necessarily one-step contraction in every equivalent norm.

\subsection{ADMM and exact sGS--ADMM share one shadow map}
\label{app:shadow-map}

The shared shadow representation of this subsection was established in the
afterimage paper~\cite[Table~1 and Appendix~A.2]{kang26arxiv-afterimage};
we spell it out in the present notation, since the error-bound transfer of
\cref{sec:algorithms} acts on it. Note that for $\tau\ne1$ the map
$T_{\sigma,\tau}$ is \emph{not} the scalar relaxation
$(1-\tau)\Id+\tau T_{\sigma,1}$; the multiplier step rescales the two
orthogonal residual components blockwise. For $\sigma>0$ and $0<\tau<2$, exact
ADMM on the dual SDP
\eqref{eq:sdp-pair} is
\begin{align}
 \lambda^{k+1}&=(\cA\cA^*)^{-1}
    \bigl[\sigma^{-1}b-\cA(\sigma^{-1}X^k+S^k-C)\bigr],\label{eq:raw-admm-y}\\
 S^{k+1}&=\Pi_+\bigl(C-\cA^*\lambda^{k+1}-\sigma^{-1}X^k\bigr),\label{eq:raw-admm-s}\\
 X^{k+1}&=X^k+\tau\sigma(\cA^*\lambda^{k+1}+S^{k+1}-C).\label{eq:raw-admm-x}
\end{align}
Define the shadow $Z^k:=\Pnull X^k+X_b-\sigma\Prow S^k-\sigma\Pnull C$.
Substitution in the first two equations gives
\begin{equation}\label{eq:raw-shared-S}
 S^{k+1}=-\sigma^{-1}\Pi_-(Z^k),
\end{equation}
because $\sigma(C-\cA^*\lambda^{k+1})-X^k=-Z^k$ (check the $\cR$ and $\cN$
components separately using \eqref{eq:raw-admm-y}); and the multiplier
update reduces to the filter
\begin{equation}\label{eq:admm-filter}
 X^{k+1}=(1-\tau)X^k+\tau\,\Pi_+(Z^k).
\end{equation}
Projecting the next shadow onto $\cR$ and $\cN$ then yields
$\Prow Z^{k+1}=\Prow Z^k-\rpar(Z^k)$ and
$\Pnull Z^{k+1}=\Pnull Z^k-\tau\,\rper_\sigma(Z^k)$, which is the shadow
recursion $T_{\sigma,\tau}$ of \cref{sec:algorithms}.

The exact sGS variant~\cite{chen17mp-sgsadmm} inserts a second equality
solve: $\lambda^{k+1/2}$ by \eqref{eq:raw-admm-y}, then $S^{k+1}$ by
\eqref{eq:raw-admm-s} with $\lambda^{k+1/2}$, then $\lambda^{k+1}$ by
\eqref{eq:raw-admm-y} with $S^{k+1}$, then \eqref{eq:raw-admm-x}. The same
shadow gives \eqref{eq:raw-shared-S}, while the last two equations give
\begin{equation}\label{eq:sgs-X}
 X^{k+1}=(\Id-\tau\Prow)X^k+\tau X_b+\tau\sigma\Pnull(S^{k+1}-C),
\end{equation}
whose $\cN$ projection again yields
$\Pnull Z^{k+1}=\Pnull Z^k-\tau\,\rper_\sigma(Z^k)$; the $\cR$ projection of
the shadow depends only on $S^{k+1}$ and matches ADMM. Thus both algorithms
share the recursion $T_{\sigma,\tau}$; the shadow records only the pair
$(\Pnull X^k,\Prow S^k)$, not all of $(X^k,S^k)$.

\subsection{Parameter-dependent metric and rates for the shadow map}
\label{app:shadow-rates}

\begin{proposition}[A parameter-dependent metric for the shadow map]\label{prop:admm-metric}
For $0<\tau<2$, $T_{\sigma,\tau}$ is $\alpha_\tau$-averaged with
$\alpha_\tau=\max\{1,\tau\}/2$ in the metric
$\mathsf G_\tau:=\Prow+\tau^{-1}\Pnull$, whose eigenvalues bound it between
$\min\{1,\tau^{-1}\}$ and $\max\{1,\tau^{-1}\}$ times the Frobenius metric.
\end{proposition}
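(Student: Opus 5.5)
The plan is to write the shadow map in block form with respect to the orthogonal splitting $\Sym^p=\cR\oplus\cN$ and compare it with the canonical DR map $T_{\sigma,1}$. The canonical map is firmly nonexpansive, i.e.\ $\tfrac12$-averaged, in the Frobenius metric. By \cref{app:shadow-map},
\[
 T_{\sigma,\tau}=\Id+\mathsf D_\tau\delta_\sigma=\Id+\mathsf D_\tau(T_{\sigma,1}-\Id),\qquad \mathsf D_\tau=\Prow+\tau\Pnull .
\]
Set $R:=\Id-T_{\sigma,1}=\rpar+\rper_\sigma$. Since $T_{\sigma,1}$ is firmly nonexpansive, $R$ is also firmly nonexpansive, that is,
\[
 \ip{R(Z)-R(Z')}{Z-Z'}\ge\normF{R(Z)-R(Z')}^2 .
\]
This is the only property of the DR map I will use.

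\emph{Averagedness in the metric $\mathsf G_\tau=\mathsf D_\tau^{-1}$.} Write $\Delta:=Z-Z'$ and $\rho:=R(Z)-R(Z')$. Then $T_{\sigma,\tau}Z-T_{\sigma,\tau}Z'=\Delta-\mathsf D_\tau\rho$. Because $\mathsf G_\tau\mathsf D_\tau=\Id$, expanding gives
\[
 \|\Delta-\mathsf D_\tau\rho\|_{\mathsf G_\tau}^2
 =\|\Delta\|_{\mathsf G_\tau}^2-2\ip{\Delta}{\rho}+\ip{\rho}{\mathsf D_\tau\rho}.
\]
I would then insert the firm-nonexpansiveness bound $\ip\Delta\rho\ge\normF\rho^2$. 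Writing $\rho=\rho_\cR+\rho_\cN$, one has $\ip\rho{\mathsf D_\tau\rho}=\normF{\rho_\cR}^2+\tau\normF{\rho_\cN}^2$, so
\[
 \|T Z-TZ'\|_{\mathsf G_\tau}^2\le\|\Delta\|_{\mathsf G_\tau}^2-\bigl((2-1)\normF{\rho_\cR}^2+(2-\tau)\normF{\rho_\cN}^2\bigr).
\]
The averagedness target, with $\alpha=\alpha_\tau$, is the inequality
\[
 \|TZ-TZ'\|_{\mathsf G_\tau}^2\le\|\Delta\|_{\mathsf G_\tau}^2-\tfrac{1-\alpha}{\alpha}\|(\Id-T)Z-(\Id-T)Z'\|_{\mathsf G_\tau}^2 .
\]
Here $(\Id-T)Z-(\Id-T)Z'=\mathsf D_\tau\rho$, so $\|\mathsf D_\tau\rho\|_{\mathsf G_\tau}^2=\normF{\rho_\cR}^2+\tau\normF{\rho_\cN}^2$. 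It therefore suffices that
\[
 \normF{\rho_\cR}^2+(2-\tau)\normF{\rho_\cN}^2\ge\kappa\bigl(\normF{\rho_\cR}^2+\tau\normF{\rho_\cN}^2\bigr),\qquad \kappa:=\tfrac{1-\alpha}{\alpha}.
\]
This holds exactly when $\kappa\le1$ and $\kappa\le(2-\tau)/\tau$, so the best choice is $\kappa=\min\{1,(2-\tau)/\tau\}$. Solving $\kappa=(1-\alpha)/\alpha$ gives $\alpha=1/(1+\kappa)$. For $\tau\le1$ this is $\alpha=\tfrac12$. For $\tau>1$ it is $\alpha=\tau/(\tau+2-\tau)=\tau/2$. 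Together, $\alpha_\tau=\max\{1,\tau\}/2$, and $\alpha_\tau<1$ precisely because $\tau<2$.

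\emph{Metric equivalence.} The operator $\mathsf G_\tau$ acts as $1$ on $\cR$ and as $\tau^{-1}$ on $\cN$, and both subspaces are nontrivial. Hence
\[
 \min\{1,\tau^{-1}\}\normF{\cdot}^2\le\|\cdot\|_{\mathsf G_\tau}^2\le\max\{1,\tau^{-1}\}\normF{\cdot}^2 .
\]

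The main point needing care is the cross term. I must check that $\mathsf D_\tau$ is self-adjoint and commutes with the orthogonal splitting, since both $\Prow$ and $\Pnull$ are orthogonal projectors. This is what makes $\ip{\mathsf G_\tau\Delta}{\mathsf D_\tau\rho}=\ip\Delta\rho$ hold, so that the Frobenius firm-nonexpansiveness inequality can be used inside the non-Frobenius metric. Everything else in the argument is this scalar optimization over $\kappa$.
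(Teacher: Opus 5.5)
Your proposal is correct and follows essentially the same route as the paper. Both write $T_{\sigma,\tau}=\Id-\mathsf D_\tau R_0$ with $R_0=\Id-T_{\sigma,1}$ firmly nonexpansive, expand $\|e-\mathsf D_\tau v\|_{\mathsf G_\tau}^2$ using $\mathsf G_\tau=\mathsf D_\tau^{-1}$, and insert $\ip ev\ge\normF v^2$; the only cosmetic difference is that you compare the $\cR$ and $\cN$ blocks separately and optimize $\kappa$, whereas the paper uses the single bound $\ip v{\mathsf D_\tau v}\le\max\{1,\tau\}\normF v^2$, and both give the same $\alpha_\tau=\max\{1,\tau\}/2$.
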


\begin{proof}[Proof of \cref{prop:admm-metric}]
Write $T_{\sigma,\tau}=\Id-\mathsf D_\tau R_0$ with $R_0:=\Id-T_{\sigma,1}$,
which is firmly nonexpansive because the canonical DR map
is~\cite{bauschke17springer-convex-analysis-hilbert-spaces}. Put $e=Z-W$ and
$v=R_0Z-R_0W$, so $\ip ev\ge\normF v^2$. With
$\mathsf G_\tau=\mathsf D_\tau^{-1}$, self-adjointness gives
\[
 \|e-\mathsf D_\tau v\|_{\mathsf G_\tau}^2
 =\|e\|_{\mathsf G_\tau}^2-2\ip ev+\ip v{\mathsf D_\tau v}
 \le\|e\|_{\mathsf G_\tau}^2
 -\Bigl(\frac{2}{\max\{1,\tau\}}-1\Bigr)\|\mathsf D_\tau v\|_{\mathsf G_\tau}^2,
\]
using $\ip v{\mathsf D_\tau v}\le\max\{1,\tau\}\normF v^2\le
\max\{1,\tau\}\ip ev$. This is the averagedness inequality with
$\alpha_\tau=\max\{1,\tau\}/2<1$ for $\tau<2$; the metric bounds are the two
eigenvalues of $\mathsf G_\tau$.
\end{proof}

\begin{proof}[Proof of \cref{cor:admm}]
Combine \eqref{eq:admm-eb}, \cref{prop:admm-metric}, and
\cref{lem:averaged-eb}: the shadow distance contracts eventually Q-linearly
in $\mathsf G_\tau$ and the shadow converges R-linearly to some
$\bar Z=\bar X-\sigma\bar S\in\Zopt$ (in the Boolean case $\bar X=X_\star$).
For the raw variables: projections are Lipschitz, so $\Pi_+(Z^k)\to\bar X$
and $S^{k+1}\to\bar S$ R-linearly by \eqref{eq:raw-shared-S}. For ADMM,
\eqref{eq:admm-filter} is a stable linear recurrence with factor
$|1-\tau|<1$. Hence $X^k\to\bar X$ R-linearly at any rate
$\widehat\rho\in(\max\{\rho,|1-\tau|\},1)$; the enlargement also absorbs
the $k\rho^k$ term when the two rates coincide. Then \eqref{eq:raw-admm-y}
transfers R-linear convergence to $\lambda^{k+1}$, whose limit is the
unique multiplier of $\bar S$. For sGS--ADMM, \eqref{eq:sgs-X} gives the
exact contraction $\Prow X^{k+1}-X_b=(1-\tau)(\Prow X^k-X_b)$. The
component $\Pnull X^k=\Pnull Z^k+\sigma\Pnull C$ is read off the shadow.
Together with the equality-solve formulas, this proves R-linear
convergence of all raw sequences.
\end{proof}

\subsection{PDHG: chart, factorization, and metric}
\label{app:pdhg-proofs}

Recall the chart of \cref{sec:algorithms}:
\begin{equation}\label{eq:pdhg-chart}
 \Phi(X,\lambda)=(Z,r),\qquad
 Z:=X-\eta_x(C-\cA^*\lambda),\qquad r:=\cA X-b .
\end{equation}
The chart is consistent with, but does not require, the known PDHG--DR
equivalences~\cite{oconnor20mp-pdhg-drs}. Throughout, $T_{\mathrm P}$ is
the PDHG map of \cref{sec:algorithms}, and $\Zopt^{\eta_x}$,
$\delta_{\eta_x}$, $c_{\eta_x}$, $t_{\eta_x}$ denote the fixed-point set,
the residual, and the constants of \eqref{eq:canonical-eb} at
$\sigma=\eta_x$.

\begin{proposition}[Exact PDHG transfer]\label{prop:pdhg-transfer}
Under surjectivity of $\cA$ and $\eta_x,\eta_y>0$, the chart
\eqref{eq:pdhg-chart} is an affine bijection mapping
$\Wopt=\Fix T_{\mathrm P}$ onto $\Zopt^{\eta_x}\times\{0\}$, and
\eqref{eq:pdhg-factor} holds with the linear isomorphism
\begin{equation}\label{eq:pdhg-L}
 \mathsf L_{\eta_x,\eta_y}(R^\parallel,R^\perp,r)
 =\begin{pmatrix}
 R^\parallel-R^\perp-\cA^\dagger r-\eta_x\eta_y\cA^*(2\cA R^\parallel-r)\\[1mm]
 \cA R^\parallel-r
 \end{pmatrix},
 \ (R^\parallel,R^\perp,r)\in\cR\times\cN\times\R^m .
\end{equation}
Consequently, if the canonical residual with $\sigma=\eta_x$ satisfies
\eqref{eq:canonical-eb}, then there are $c_{\mathrm P},t_{\mathrm P}>0$ with
$\|\delta_{\mathrm P}(w)\|\ge c_{\mathrm P}\dist(w,\Wopt)$ whenever
$\dist(w,\Wopt)\le t_{\mathrm P}$, where
$\delta_{\mathrm P}:=T_{\mathrm P}-\Id$.
\end{proposition}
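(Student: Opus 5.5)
The plan is to verify the statement by direct computation in the chart. I would first invert $\Phi$ explicitly, then compute one PDHG step in the new variables and recognize the canonical residual components, and finally transfer the error bound through the linear part of $\Phi$ and the isomorphism $\mathsf L_{\eta_x,\eta_y}$.

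\emph{Step 1: bijectivity and fixed points.} Split $Z=X-\eta_x(C-\cA^*\lambda)$ into its $\cR$ and $\cN$ components. The $\cN$ part gives $\Pnull X=\Pnull(Z+\eta_xC)$, and $r=\cA X-b$ gives $\Prow X=\cA^\dagger(r+b)$. Hence
\[
 X=\cA^\dagger(r+b)+\Pnull(Z+\eta_xC).
\]
The $\cR$ part then determines $\cA^*\lambda=\eta_x^{-1}\Prow(Z-X+\eta_xC)$. Since surjectivity of $\cA$ makes $\cA^*$ injective, $\lambda$ is unique, so $\Phi$ is an affine bijection. The fixed-point claim will drop out of Step 2. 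Once the factorization is established, the PDHG residual vanishes iff $(\rpar(Z),\rper_{\eta_x}(Z),r)=0$, by injectivity of $\mathsf L$. By \cref{lem:drs} with $\sigma=\eta_x$ this means $Z\in\Zopt^{\eta_x}$ and $r=0$.

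\emph{Step 2: the factorization.} PDHG gives $X^+=\Pi_+(Z)$ directly from the definition of $Z$. Using the formula for $X$ above and $X_b=\cA^\dagger b$, I would decompose
\[
 X^+-X=\bigl[\Prow(\Pi_+(Z)-X_b)-\cA^\dagger r\bigr]+\bigl[\Pnull\Pi_+(Z)-\Pnull(Z+\eta_xC)\bigr]=\rpar-\rper_{\eta_x}-\cA^\dagger r .
\]
For the second bracket, Moreau's identity $Z-\Pi_+(Z)=\Pi_-(Z)$ turns it into $-\rper_{\eta_x}$. Next, $\cA X^+-b=\cA\rpar$, because $\cA\Pnull=0$. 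Therefore $\cA(2X^+-X)-b=2\cA\rpar-r$, and
\[
 Z^+-Z=(X^+-X)-\eta_x\eta_y\cA^*(2\cA\rpar-r),\qquad r^+-r=\cA\rpar-r .
\]
This is exactly \eqref{eq:pdhg-L}.

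To show $\mathsf L$ is an isomorphism, note that the dimensions match: $\dim\cR+\dim\cN+m=\dim\Sym^p+m$. So injectivity suffices. If the second component vanishes, then $r=\cA R^\parallel$. The first component then becomes $-R^\perp-\eta_x\eta_y\cA^*\cA R^\parallel$, using $\cA^\dagger\cA R^\parallel=R^\parallel$ for $R^\parallel\in\cR$. Its $\cN$ and $\cR$ parts vanish separately, giving $R^\perp=0$ and $\cA R^\parallel=0$. Since $R^\parallel\in\cR=\cN^\perp$, this forces $R^\parallel=0$, and then $r=0$.

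\emph{Step 3: error-bound transfer.} Let $\Phi_0$ denote the invertible linear part of $\Phi$. Then
\[
 \Phi T_{\mathrm P}\Phi^{-1}(\Phi w)-\Phi w=\Phi_0\,\delta_{\mathrm P}(w),
\]
so $\|\delta_{\mathrm P}(w)\|\ge\normop{\Phi_0}^{-1}\,\sigma_{\min}(\mathsf L)\,\|(\rpar,\rper_{\eta_x},r)\|$. The orthogonality of $\rpar$ and $\rper_{\eta_x}$ gives $\|(\rpar,\rper_{\eta_x},r)\|^2=\normF{\delta_{\eta_x}(Z)}^2+\|r\|^2$. Applying \eqref{eq:canonical-eb}, this is at least $\min\{c_{\eta_x},1\}^2\bigl(\dist_F(Z,\Zopt^{\eta_x})^2+\|r\|^2\bigr)$, and the last bracket equals $\dist((Z,r),\Zopt^{\eta_x}\times\{0\})^2$. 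This holds whenever $\dist_F(Z,\Zopt^{\eta_x})\le t_{\eta_x}$. Finally, $\Phi$ maps $\Wopt$ onto $\Zopt^{\eta_x}\times\{0\}$, so distances to the two fixed-point sets are comparable with factors $\normop{\Phi_0}$ and $\normop{\Phi_0^{-1}}$. It therefore suffices to take $t_{\mathrm P}:=t_{\eta_x}/\normop{\Phi_0}$ and a correspondingly rescaled $c_{\mathrm P}$.

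\emph{Main obstacle.} I expect the only delicate point to be the bookkeeping in Step 2. Specifically, one must correctly recover $X$ from the chart so that the $\cN$ part of $X^+-X$ becomes exactly $-\rper_{\eta_x}$, and keep the $\cR$/$\cN$ splitting consistent so that the orthogonal-component arguments in Steps 2 and 3 go through.
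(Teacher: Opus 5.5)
Your proposal is correct and follows the paper's proof essentially step for step. It uses the same explicit inverse of the chart, the same $\cR/\cN$ splitting of $X^+-X$ into $\rpar-\rper_{\eta_x}-\cA^\dagger r$, the same injectivity argument for $\mathsf L_{\eta_x,\eta_y}$, and the same transfer constants through $\sigma_{\min}(\Phi_0)$, $\normop{\Phi_0}$, and $\sigma_{\min}(\mathsf L_{\eta_x,\eta_y})$. The one harmless difference is that you obtain $\Phi(\Wopt)=\Zopt^{\eta_x}\times\{0\}$ from the factorization plus injectivity of $\mathsf L_{\eta_x,\eta_y}$, whereas the paper checks directly that PDHG fixed points are exactly KKT pairs.
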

\begin{proof}
The linear part of $\Phi$ is
$\mathsf M_\Phi=\bigl[\begin{smallmatrix}\Id&\eta_x\cA^*\\
\cA&0\end{smallmatrix}\bigr]$. If $X+\eta_x\cA^*\lambda=0$ and $\cA X=0$,
then $\cA\cA^*\lambda=0$, so $\lambda=0$ and $X=0$. Hence the square map is
invertible, with explicit inverse
$X=X_b+\cA^\dagger r+\Pnull(Z+\eta_xC)$,
$\lambda=\eta_x^{-1}(\cA\cA^*)^{-1}\cA(Z-X+\eta_xC)$. At a fixed point,
$\cA X=b$ and $X=\Pi_+(X-\eta_xS)$ with $S=C-\cA^*\lambda$---exactly the
KKT conditions---so the image of the fixed set is as claimed. Since
$X^+=\Pi_+(Z)$, decomposing by $\Prow,\Pnull$ gives
\[
 X^+-X=\rpar(Z)-\rper_{\eta_x}(Z)-\cA^\dagger r,\qquad
 r^+=\cA\,\rpar(Z),\qquad
 \lambda^+-\lambda=\eta_y\bigl(r-2\cA\,\rpar(Z)\bigr),
\]
and $Z^+-Z=X^+-X+\eta_x\cA^*(\lambda^+-\lambda)$ proves
\eqref{eq:pdhg-factor}. For injectivity of $\mathsf L_{\eta_x,\eta_y}$,
suppose the image is zero. The second component gives $r=\cA R^\parallel$.
Substituting into the first gives
$-R^\perp-\eta_x\eta_y\cA^*\cA R^\parallel=0$, whose two terms lie in the
orthogonal spaces $\cN$ and $\cR$. Hence $R^\perp=0$ and
$\cA R^\parallel=0$, which forces $R^\parallel=0$ and then $r=0$. Equal
finite dimensions give the isomorphism; neither invertibility uses the
step-size inequality. Finally, let
$\ell:=\sigma_{\min}(\mathsf L_{\eta_x,\eta_y})>0$,
$a_\Phi:=\sigma_{\min}(\mathsf M_\Phi)>0$,
$b_\Phi:=\normop{\mathsf M_\Phi}$. Whenever
$\dist_F(Z,\Zopt^{\eta_x})\le t_{\eta_x}$, orthogonality of the residual
components gives
\[
 \|\Phi T_{\mathrm P}\Phi^{-1}(Z,r)-(Z,r)\|
 \ge\ell\sqrt{\|\delta_{\eta_x}(Z)\|_F^2+\|r\|_2^2}
 \ge\ell\min\{c_{\eta_x},1\}\,
 \dist\bigl((Z,r),\Zopt^{\eta_x}\times\{0\}\bigr).
\]
Moreover $\Phi$ is affine, so
$a_\Phi\dist(w,\Wopt)\le\dist(\Phi w,\Phi\Wopt)\le b_\Phi\dist(w,\Wopt)$,
and the transformed displacement is $\mathsf M_\Phi\delta_{\mathrm P}(w)$.
One may therefore take
$c_{\mathrm P}=(a_\Phi/b_\Phi)\,\ell\min\{c_{\eta_x},1\}$ and
$t_{\mathrm P}=t_{\eta_x}/b_\Phi$.
\end{proof}

For a fixed instance, invertibility already supplies the positive
constants; uniform invertibility would be needed only for constants uniform
over a family of data and step sizes.

\begin{proposition}[PDHG parameter-dependent metric]\label{prop:pdhg-metric}
If $\theta:=\sqrt{\eta_x\eta_y}\,\normop{\cA}<1$, then $T_{\mathrm P}$ is
firmly nonexpansive in
$\mathsf G_{\mathrm P}:=\bigl[\begin{smallmatrix}\eta_x^{-1}\Id&\cA^*\\
\cA&\eta_y^{-1}I_m\end{smallmatrix}\bigr]\succ0$, and one may take
$g^{\mathrm P}_{\min}=(1-\theta)/\max\{\eta_x,\eta_y\}$ and
$g^{\mathrm P}_{\max}=(1+\theta)/\min\{\eta_x,\eta_y\}$ in
\cref{lem:averaged-eb}.
\end{proposition}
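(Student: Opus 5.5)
The plan is to recast fixed-step PDHG as a preconditioned proximal-point step for the monotone KKT operator, in the spirit of~\cite{chambolle11jmiv-pdhg}. Firm nonexpansiveness in $\mathsf G_{\mathrm P}$ is then the standard firm nonexpansiveness of a resolvent in the metric that defines it. Define, on $\Sym^p\times\R^m$, the operator
\[
 \mathcal T(X,\lambda):=\bigl(N_{\Sym_+^p}(X)+C-\cA^*\lambda,\ \cA X-b\bigr).
\]
It is the sum of the maximal monotone operator $(X,\lambda)\mapsto(N_{\Sym_+^p}(X)+C,\,-b)$ and the bounded skew linear map $(X,\lambda)\mapsto(-\cA^*\lambda,\cA X)$. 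Hence it is maximal monotone. Its zeros are exactly the KKT pairs of \eqref{eq:sdp-pair}, with slack $S=C-\cA^*\lambda$.

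\emph{Step 1: the implicit inclusion.} The primal update $X^+=\Pi_+(X-\eta_x(C-\cA^*\lambda))$ is equivalent to
\[
 0\in N_{\Sym_+^p}(X^+)+\eta_x^{-1}(X^+-X)+C-\cA^*\lambda .
\]
Writing $-\cA^*\lambda=-\cA^*\lambda^++\cA^*(\lambda^+-\lambda)$ turns this into the first row of
\[
 0\in\mathcal T(w^+)+\mathsf G_{\mathrm P}(w^+-w),
\]
with off-diagonal block $+\cA^*$. The dual update $\lambda^+=\lambda+\eta_y(b-\cA(2X^+-X))$ rearranges to
\[
 0=\cA X^+-b+\cA(X^+-X)+\eta_y^{-1}(\lambda^+-\lambda),
\]
which is the second row, with off-diagonal block $\cA$. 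So $T_{\mathrm P}=(\mathsf G_{\mathrm P}+\mathcal T)^{-1}\mathsf G_{\mathrm P}$. The only care needed is to match the signs of the off-diagonal blocks to $\mathsf G_{\mathrm P}$, which is exactly what the extrapolation $2X^+-X$ produces.

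\emph{Step 2: positivity and the metric constants.} Put $\mathsf D:=\diag(\eta_x^{-1}\Id,\eta_y^{-1}I_m)$. Then
\[
 \mathsf G_{\mathrm P}=\mathsf D^{1/2}(\Id+\mathsf K)\mathsf D^{1/2},
\]
where $\mathsf K$ is the symmetric off-diagonal operator with blocks $\sqrt{\eta_x\eta_y}\,\cA^*$ and $\sqrt{\eta_x\eta_y}\,\cA$. Its spectrum is symmetric with $\normop{\mathsf K}=\theta$. Hence
\[
 (1-\theta)\mathsf D\preceq\mathsf G_{\mathrm P}\preceq(1+\theta)\mathsf D .
\]
Combining this with $\max\{\eta_x,\eta_y\}^{-1}\Id\preceq\mathsf D\preceq\min\{\eta_x,\eta_y\}^{-1}\Id$ yields the stated $g^{\mathrm P}_{\min}$ and $g^{\mathrm P}_{\max}$, and $\mathsf G_{\mathrm P}\succ0$ because $\theta<1$.

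\emph{Step 3: firm nonexpansiveness.} In the inner product $\ip{\cdot}{\mathsf G_{\mathrm P}\cdot}$ the operator $\mathsf G_{\mathrm P}^{-1}\mathcal T$ is maximal monotone, since monotonicity transfers through the self-adjoint positive metric. Its resolvent is therefore single-valued, everywhere defined, and firmly nonexpansive in $\|\cdot\|_{\mathsf G_{\mathrm P}}$~\cite{bauschke17springer-convex-analysis-hilbert-spaces}. Directly: for two points $w_1,w_2$ with images $w_1^+,w_2^+$, subtract the two inclusions and use monotonicity of $\mathcal T$. This gives
\[
 \ip{w_1^+-w_2^+}{\mathsf G_{\mathrm P}\bigl((w_1-w_2)-(w_1^+-w_2^+)\bigr)}\ge0,
\]
which is the firm nonexpansiveness inequality. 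Firm nonexpansiveness means $\tfrac12$-averaged, so \cref{lem:averaged-eb} applies with $\alpha=\tfrac12$.

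The main obstacle is only the bookkeeping in Step 1. Everything else is standard once the inclusion $0\in\mathcal T(w^+)+\mathsf G_{\mathrm P}(w^+-w)$ is verified.
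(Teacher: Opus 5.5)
Your proposal is correct and follows the paper's proof. Both write PDHG as the preconditioned proximal-point inclusion $0\in\mathcal T(w^+)+\mathsf G_{\mathrm P}(w^+-w)$ for the monotone KKT operator (the normal-cone part plus a skew linear map), then subtract two such inclusions to get firm nonexpansiveness in $\mathsf G_{\mathrm P}$; the only cosmetic difference is that you derive $g^{\mathrm P}_{\min},g^{\mathrm P}_{\max}$ from the congruence $\mathsf G_{\mathrm P}=\mathsf D^{1/2}(\Id+\mathsf K)\mathsf D^{1/2}$ with $\normop{\mathsf K}=\theta$, whereas the paper bounds the cross term $2|\ip{\cA X}{\lambda}|$ directly by a weighted Cauchy--Schwarz inequality, and both give the same constants.
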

\begin{proof}
The cross term obeys
$2|\ip{\cA X}\lambda|\le\theta(\eta_x^{-1}\normF X^2+\eta_y^{-1}
\normtwo\lambda^2)$, which gives the metric bounds. Define the KKT
operator
$\mathsf F_{\mathrm{KKT}}(X,\lambda):=\bigl(C-\cA^*\lambda+N_{\Sym_+^p}(X),\
\cA X-b\bigr)$. It is monotone, because its linear part
$(X,\lambda)\mapsto(-\cA^*\lambda,\cA X)$ is skew-adjoint and the
normal-cone map $N_{\Sym_+^p}$ is monotone. Direct substitution from the
PDHG update gives
$0\in\mathsf F_{\mathrm{KKT}}(w^+)+\mathsf G_{\mathrm P}(w^+-w)$, the
preconditioned proximal-point form of
PDHG~\cite{chambolle11jmiv-pdhg,oconnor20mp-pdhg-drs}; monotonicity applied
to two such inclusions gives
$\|w^+-\bar w^+\|_{\mathsf G_{\mathrm P}}^2\le
\ip{w^+-\bar w^+}{w-\bar w}_{\mathsf G_{\mathrm P}}$, which is firm
nonexpansiveness.
\end{proof}

\begin{proof}[Proof of \cref{cor:pdhg}]
Apply \cref{lem:averaged-eb} with $\alpha=1/2$,
\cref{prop:pdhg-transfer,prop:pdhg-metric}. R-linear convergence of the
slack follows from $S^k=C-\cA^*\lambda^k$ and Lipschitz continuity.
\end{proof}

%% file: sections/app_genericity.tex
\section{Genericity}
\label{app:genericity}

Two clarifications on scope. First, genericity is asserted in the full
coefficient space $\cP_n$; a structured subfamily (sparse, low-degree, or
symmetric) is measure-zero and needs its own relative-genericity argument.
Second, full row rank of $\cA_d$ is a representation choice made once and
for all in \cref{prop:encoding}, not a property supplied by genericity.
Under an arbitrary split in \eqref{eq:wGamma}, the vectors $w_A$ can be
linearly dependent, and the kernel of $S_{\mathrm{int}}$ can be strictly
larger than $\spanof\{u\}$. The convention stated in \cref{sec:genericity}
is what makes them a basis of $u^\perp$.

\subsection{Full order: proof of \texorpdfstring{\cref{prop:eventual}}{the full-order proposition}}
\label{app:full-order}

\begin{proof}[Proof of \cref{prop:eventual}]
Define the point indicators
$\ell_a(x):=2^{-n}\prod_{i=1}^n(1+a_ix_i)
=2^{-n}\sum_{T\subseteq[n]}a^Tx^T$. Modulo $I_{\mathrm B}$,
$\ell_a^2\equiv\ell_a$ and $\sum_a\ell_a\equiv1$. At full order the moment
PSD condition tests the square of every Boolean function, so
$\pi_a:=L_y(\ell_a)=L_y(\ell_a^2)\ge0$ and $\sum_a\pi_a=1$; the
interpolation identity $h\equiv\sum_ah(a)\ell_a$ gives
$L_y(h)=\sum_a\pi_ah(a)$, i.e.\ the simplex representation. The vectors
$b_n(a)$ are mutually orthogonal because
$b_n(a)^\top b_n(b)=\prod_i(1+a_ib_i)=2^n\mathbf 1_{\{a=b\}}$, which proves
uniqueness of the coefficients and, for uniquely minimized $f$, uniqueness
of the optimizer $b_n(x_\star)b_n(x_\star)^\top$.

For upward persistence, let $d_0$ be an order at which $f$ is uniquely
rank-one exact and let $d_0<d\le n$. An order-$d$ optimizer truncates to an
order-$d_0$ feasible matrix with the same objective value, since
$\deg f\le2d_0$. Hierarchy monotonicity and the feasible lift of $x_\star$
give $p_{d_0}(f)\le p_d(f)\le f(x_\star)=p_{d_0}(f)$. So the truncation is
the unique lower-order optimum; in particular $y_{\{i\}}=x_{\star,i}$ for
all $i$. Choose Gram vectors $g_A$ of the higher-order optimizer with
$\ip{g_A}{g_B}=y_{A\triangle B}$ and $\|g_A\|=1$. Whenever
$A,A\triangle\{i\}\in\cI_d$,
$\ip{g_A}{g_{A\triangle\{i\}}}=x_{\star,i}\in\{\pm1\}$, so equality in
Cauchy--Schwarz gives $g_{A\triangle\{i\}}=x_{\star,i}\,g_A$. Adding the
elements of $A$ one at a time gives $g_A=x_\star^A\,g_\emptyset$. Hence the
optimizer is $b_d(x_\star)b_d(x_\star)^\top$.

For the certificate: with $g:=f-f(x_\star)$ and $c_a:=2^{-n}b_n(a)$, set
\begin{equation}\label{eq:full-sc}
 S_{\mathrm{full}}:=\sum_{a\ne x_\star} g(a)\,c_ac_a^\top .
\end{equation}
One has $b_n(x)^\top c_ac_a^\top b_n(x)\equiv\ell_a(x)^2$, so
$S_{\mathrm{full}}$ represents
$\sum_{a\ne x_\star}g(a)\ell_a^2\equiv\sum_ag(a)\ell_a\equiv g$; all
displayed weights $g(a)$ are positive by unique minimality, and
orthogonality of the $b_n(a)$ gives
$\ker(S_{\mathrm{full}})=\spanof\{b_n(x_\star)\}$.
\end{proof}

\subsection{Fixed-order cells: proof of \texorpdfstring{\cref{thm:cell-generic}}{the cell theorem}}
\label{app:cells-proof}

Recall the cells of \cref{sec:genericity}: for $u=b_d(a)$,
\begin{align}
 \cQ(a,d)&:=\{g\in\cP_{\le2d}:g(a)=0\},\notag\\
 \cE(a,d)&:=\Bigl\{g\in\cQ(a,d):
 \argmin_{X\in\cX_d}\ip{C_g}X=\{uu^\top\}\Bigr\},\label{eq:cells}\\
 \cE_{\mathrm{sc}}(a,d)&:=\{g\in\cQ(a,d):\exists S\succeq0,\
 \cG_d(S)=g,\ \ker(S)=\spanof\{u\}\}.\notag
\end{align}
Suppose a certificate $S$ of $\cE_{\mathrm{sc}}(a,d)$ exists. Every
feasible $X$ has value $\ip SX\ge0$, while $uu^\top$ has value zero.
Equality forces $\ran(X)\subseteq\ker(S)=\spanof\{u\}$, and
$X_{\emptyset,\emptyset}=1$ then gives $X=uu^\top$. Hence
$\cE_{\mathrm{sc}}(a,d)\subseteq\cE(a,d)$. In particular, the definition of
$\cE_{\mathrm{sc}}(a,d)$ may ask only that some PSD Gram matrix with kernel
$\spanof\{u\}$ represents $g$; optimality is automatic.

The family \eqref{eq:wGamma} gives a linear right inverse of $\cG_d$ on
$\cQ(a,d)$. Each $w_\Gamma$ satisfies $w_\Gamma^\top u=0$ and
$\cG_d(w_\Gamma w_\Gamma^\top)=\psi_\Gamma:=2(1-a^\Gamma x^\Gamma)$. The
$|\cI_{2d}|-1$ polynomials $\psi_\Gamma$ form a basis of $\cQ(a,d)$. Define
the section
$\mathscr B_{a,d}(\sum_\Gamma t_\Gamma\psi_\Gamma):=\sum_\Gamma t_\Gamma
w_\Gamma w_\Gamma^\top$. It obeys $\cG_d(\mathscr B_{a,d}(h))=h$ and
$\mathscr B_{a,d}(h)u=0$. Since $t_\Gamma=-a^\Gamma h_\Gamma/2$ and
$\normop{w_\Gamma w_\Gamma^\top}=2$,
\begin{equation}\label{eq:section-bound}
 \normop{\mathscr B_{a,d}(h)}\le\sum_{\Gamma\ne\emptyset}|h_\Gamma|
 \le\sqrt{|\cI_{2d}|-1}\ \normc h ,
\end{equation}
where $\normc h$ is the Euclidean norm of the coefficient vector. By the
split convention of \cref{sec:genericity}, $w_A=e_A-a^Ae_\emptyset$, and
these $N-1$ vectors are orthogonal to $u$ and linearly independent. Hence
the interior certificate $S_{\mathrm{int}}$ of \eqref{eq:wGamma} satisfies
$S_{\mathrm{int}}\succeq0$, $\ker(S_{\mathrm{int}})=\spanof\{u\}$, and
$\cG_d(S_{\mathrm{int}})=r_{a,d}:=2\sum_{\emptyset\ne A\in\cI_d}(1-a^Ax^A)$.
In particular $r_{a,d}\in\cE_{\mathrm{sc}}(a,d)$, so the cell is nonempty.

\begin{proof}[Proof of \cref{thm:cell-generic}]
\emph{Openness.} Fix $g\in\cE_{\mathrm{sc}}(a,d)$, and let $S$ be a
certificate for $g$ that is positive definite on $u^\perp$. Write $\kappa>0$
for the smallest eigenvalue of $S$ restricted to $u^\perp$.
For $h\in\cQ(a,d)$, the matrix $S+\mathscr B_{a,d}(h)$ still annihilates
$u$ and represents $g+h$. By \eqref{eq:section-bound} it stays positive
definite on $u^\perp$ whenever $\normc h<\kappa/(2\sqrt{|\cI_{2d}|-1})$.
Hence $g+h\in\cE_{\mathrm{sc}}(a,d)$.

\emph{Density.} If $g\in\cE(a,d)$, primal Slater gives an optimal Gram
matrix $S\succeq0$ representing $g$, and $u^\top Su=g(a)=0$ forces $Su=0$.
For every $\eps>0$, $S+\eps S_{\mathrm{int}}$ represents $g+\eps r_{a,d}$
and is positive definite on $u^\perp$, so
$g+\eps r_{a,d}\in\cE_{\mathrm{sc}}(a,d)$, and $g+\eps r_{a,d}\to g$ as
$\eps\to0$.

\emph{Semialgebraicity.} With $P_{u^\perp}=I_N-uu^\top/N$, membership in
$\cE_{\mathrm{sc}}(a,d)$ is the existential first-order formula
$\exists S,t:\ \cG_d(S)=g,\ Su=0,\ S-tP_{u^\perp}\succeq0,\ t>0$; PSD-ness
is finitely many minor inequalities. Membership in $\cE(a,d)$ is
$g\in\cQ(a,d)$ together with
$\nexists X\in\cX_d:\ \normF{X-uu^\top}^2>0,\ \ip{C_g}X\le0$. The
Tarski--Seidenberg theorem and closure under complements make both cells
semialgebraic~\cite{bochnak98book-rag}.

\emph{Dimension.} Abbreviate $\cE_{\mathrm{sc}}:=\cE_{\mathrm{sc}}(a,d)$ and
$\cE:=\cE(a,d)$; density gives
$\cE_{\mathrm{sc}}\subseteq\cE\subseteq\cl\cE_{\mathrm{sc}}$. The
semialgebraic frontier theorem gives
$\dim(\cl\cE_{\mathrm{sc}}\setminus\cE_{\mathrm{sc}})<\dim\cE_{\mathrm{sc}}$
and $\dim\cl\cE_{\mathrm{sc}}=\dim\cE_{\mathrm{sc}}$~\cite[Propositions~2.8.2
and~2.8.13]{bochnak98book-rag}. Hence
$\dim(\cE\setminus\cE_{\mathrm{sc}})<\dim\cE$. The open neighborhood of
$r_{a,d}$ shows $\dim\cE=\dim\cQ(a,d)=|\cI_{2d}|-1$. The exceptional set
therefore has zero Lebesgue measure in $\cQ(a,d)$.
\end{proof}

\subsection{Ambient genericity}
\label{app:ambient}

\begin{proposition}[Simultaneous SC at every uniquely rank-one exact order]
\label{prop:simultaneous}
There is an open dense full-measure semialgebraic set
$\Ggen\subseteq\cP_n$ such that every $f\in\Ggen$ has a unique Boolean
minimizer and, at every order $d_\star(f)\le d\le n$, the relaxation admits
a rank-$(N_d-1)$ optimal certificate.
\end{proposition}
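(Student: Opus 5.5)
The plan is to build $\Ggen$ as the complement of a finite union of closed, lower-dimensional semialgebraic sets, using \cref{thm:cell-generic} for each pair $(a,d)$. The first bad family is the tie set $\mathcal B_0:=\bigcup_{a\ne b}\{f\in\cP_n:f(a)=f(b)\}$, a finite union of hyperplanes (the evaluation functionals $f\mapsto f(a)$ are linearly independent by orthogonality of the $b_n(a)$, so $f(a)-f(b)$ is a nonzero linear functional). Off $\mathcal B_0$ every $f$ has a unique minimizer, so $d_\star(f)\le n$ is defined by \cref{prop:eventual}.

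For the second family, fix $a\in\{\pm1\}^n$ and $d$ with $1\le d\le n$. Let $\pi_d:\cP_n\to\cP_{\le2d}$ be the coefficient truncation, and let $\phi_{a,d}(f):=\pi_d(f)-\pi_d(f)(a)$ on $\{f:\deg f\le 2d\}$. I would define
\[
 \mathcal B_{a,d}:=\bigl\{f\in\cP_{\le2d}:\ f-f(a)\in\cE(a,d)\setminus\cE_{\mathrm{sc}}(a,d)\bigr\}.
\]
This is the set of objectives of degree at most $2d$ whose order-$d$ relaxation is uniquely rank-one exact at $a$ but has no certificate of rank $N_d-1$. By definition of $\cE_{\mathrm{sc}}$, rank $N_d-1$ with kernel $\spanof\{u\}$ is exactly what is needed. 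The map $f\mapsto(f-f(a),f(a))$ is a linear isomorphism $\cP_{\le2d}\cong\cQ(a,d)\times\R$. So $\mathcal B_{a,d}$ is semialgebraic of dimension at most $\dim(\cE\setminus\cE_{\mathrm{sc}})+1<\dim\cQ(a,d)+1=\dim\cP_{\le2d}\le2^n$ by \cref{thm:cell-generic}. Its closure has the same dimension, again by the frontier/closure dimension results of \cite{bochnak98book-rag}.

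Put $\Ggen:=\cP_n\setminus\cl\bigl(\mathcal B_0\cup\bigcup_{a,d}\mathcal B_{a,d}\bigr)$. This is a complement of a finite union of closed semialgebraic sets of dimension below $2^n$. Hence it is open, semialgebraic, dense (a closed set of lower dimension has empty interior), and of full measure.

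For the verification, take $f\in\Ggen$ with minimizer $x_\star$ and any $d\in[d_\star(f),n]$. By upward persistence in \cref{prop:eventual}, $\argmin(\mathrm P_d)=\{b_d(x_\star)b_d(x_\star)^\top\}$, and $\deg f\le2d_\star\le 2d$. The exact value is $f_\star=f(x_\star)$, and the optimal certificate set for $f$ coincides with that for $g:=f-f(x_\star)$, since subtracting a constant shifts only $\gamma$. Thus $g\in\cE(x_\star,d)$. Since $f\notin\mathcal B_{x_\star,d}$, we get $g\in\cE_{\mathrm{sc}}(x_\star,d)$, i.e.\ some PSD Gram matrix $S$ with $\ker S=\spanof\{u\}$ represents $g$. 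Such an $S$ is an optimal certificate (as noted in \cref{app:cells-proof}) of rank $N_d-1$.

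The main obstacle I expect is bookkeeping. $\mathcal B_{a,d}$ lives in the subspace $\cP_{\le2d}$, and one must handle the case $\cP_{\le2d}=\cP_n$ (when $2d\ge n$) without losing the strict dimension drop. The $+1$ from the constant term is exactly compensated by $\dim\cQ(a,d)=\dim\cP_{\le2d}-1$, so the inequality $\dim\mathcal B_{a,d}<\dim\cP_{\le 2d}\le 2^n$ holds uniformly. A further check is that taking closures preserves semialgebraicity and dimension, which is standard.
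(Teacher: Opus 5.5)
Your proposal is correct and follows the paper's proof essentially step for step: you remove the tie hyperplanes, lift each exceptional cell $\cE(a,d)\setminus\cE_{\mathrm{sc}}(a,d)$ by the additive constant (your $\mathcal B_{a,d}$ is exactly the paper's $\widetilde B_{a,d}$) to get dimension below $|\cI_{2d}|\le 2^n$, take $\Ggen$ to be the complement of the closure of the union, and verify via upward persistence from \cref{prop:eventual}. Your explicit check that a certificate for $g=f-f(x_\star)$ is also an optimal certificate for $f$ is a detail the paper leaves implicit.
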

\begin{proof}
First exclude the finite union of proper tie hyperplanes
$E_{\mathrm{tie}}:=\bigcup_{a\ne b}\{f:f(a)=f(b)\}$; each is proper because
some linear monomial distinguishes $a$ from $b$. For each pair $(a,d)$, let
$B_{a,d}:=\cE(a,d)\setminus\cE_{\mathrm{sc}}(a,d)$ and lift it to the full
coefficient space by the additive constant,
$\widetilde B_{a,d}:=\{g+c:g\in B_{a,d},\ c\in\R\}$. The map
$(g,c)\mapsto g+c$ is injective on this domain (because $g(a)=0$ recovers
$c=f(a)$), so for nonempty $B_{a,d}$,
\[
 \dim\widetilde B_{a,d}=\dim B_{a,d}+1<|\cI_{2d}|\le2^n
\]
by \cref{thm:cell-generic}. Let $E_{\mathrm{bad}}$ be the union of
$E_{\mathrm{tie}}$ with the finitely many $\widetilde B_{a,d}$, and set
$\Ggen:=\cP_n\setminus\cl E_{\mathrm{bad}}$. Finite unions and closures
preserve semialgebraicity and dimension, so $\dim\cl E_{\mathrm{bad}}<2^n$
and $\Ggen$ is open, dense, and full measure. For $f\in\Ggen$ with
minimizer $x_\star$, \cref{prop:eventual} gives $d_\star(f)\le n$ and
unique rank-one exactness at every order $d\in[d_\star(f),n]$. At each
such order, $f-f(x_\star)\in\cE(x_\star,d)$. Since $f$ avoids the
corresponding lifted bad set, this normalized objective lies in
$\cE_{\mathrm{sc}}(x_\star,d)$.
\end{proof}

The proof of \cref{thm:main} in \cref{sec:genericity} assembles
\cref{prop:simultaneous} with
\cref{prop:encoding,lem:slack-trace,prop:boolean-modulus,thm:rigidity} and
the transfer results of \cref{sec:algorithms}.

%% file: sections/app_experiments.tex
\section{Details of the Illustrative Experiments}
\label{app:experiments}

This appendix records the setup behind \cref{fig:motivation} and one
further illustration for a higher-degree objective. The experiments are
illustrations of the theory rather than a benchmark; no timing comparison
is intended.

\paragraph{General setup.}
All runs except the saved run of panel (a) use the exact ADMM of
\cref{sec:algorithms} with multiplier step $\tau=1$ and a fixed penalty
$\sigma$ (reported per experiment), implemented in MATLAB, stopped when
the maximal KKT residual falls below $10^{-10}$. The maximal KKT residual
is $\max\{r_{\mathrm p},r_{\mathrm d},r_{\mathrm g}\}$ with
$r_{\mathrm p}=\normtwo{\cA X-b}/(1+\normtwo{b})$,
$r_{\mathrm d}=\normF{\cA^*\lambda+S-C}/(1+\normF{C})$, and
$r_{\mathrm g}=|\ip{C}{X}-b^\top\lambda|/(1+|\ip{C}{X}|+|b^\top\lambda|)$.
Symmetric matrices
are vectorized by the orthonormal $\operatorname{svec}$, with off-diagonal
entries scaled by $\sqrt2$, so Euclidean distances in these coordinates
equal Frobenius distances. Panels (a) and (c) of each figure report the
fixed-point residual $\normF{Z^{k+1}-Z^k}$ of the shadow sequence of
\cref{sec:algorithms} on a logarithmic scale. Unique rank-one exactness
and the existence of a strictly complementary certificate are verified
once per instance. We solve both sides of the relaxation with an
interior-point solver at tolerance $10^{-14}$ and inspect the spectra of
the optimal pair. Throughout, ``certified'' means that the Boolean minimum
and its uniqueness are established by exhaustive enumeration of all
$2^{n}$ cube points (exact up to floating-point summation); rank and
strict-complementarity statements are numerical, at the stated
tolerances. All rank readouts in this appendix use a $10^{-6}$ relative
eigenvalue tolerance. All new runs were performed on a laptop with an
Apple M2 Max processor and $64$\,GB of RAM.


\paragraph{Instances.}
\emph{BQP-20}: the binary quadratic instance BQP-r1-20-1 of
\cite{kang25arxiv-admmsdp-linearconv}, with $n=20$ and standard Gaussian
$Q$ and $c$; \cref{fig:motivation}(a) is that paper's saved run on the
standard (non-quotient) second-order relaxation, with $N=231$,
$m=20{,}601$, and $\sigma=100$. \emph{BQP-20q}: an independent Gaussian
instance with $n=20$, used for \cref{fig:motivation}(b)--(c) on the
quotient relaxation of \cref{sec:degeneracy} ($N=211$, $m=16{,}171$,
$\sigma=1$). It is certified by direct enumeration of the $2^{20}$ cube
points and an interior-point solve. The minimizer is unique, with the
second-smallest cube value $0.69$ above the minimum; the relaxation value
matches the minimum to relative error below $10^{-13}$; and a strictly
complementary certificate of rank $N-1=210$ exists. \emph{PBO4-20}: a
dense degree-$\le4$ Boolean objective with standard Gaussian coefficients
and $n=20$, relaxed at order $d=2$ ($N=211$, $m=16{,}171$, $\sigma=1$). It
is certified like BQP-20q, with the cube scan performed by a fast
Walsh--Hadamard transform. The minimizer is unique, with the
second-smallest cube value $15.6$ above the minimum; the relaxation value
matches the minimum to relative error below $10^{-13}$; and a strictly
complementary certificate of rank $210$ exists. Not
every Gaussian quartic is order-two exact: the first sampled seed has
relative relaxation gap $7.8\times10^{-2}$, and the certified instance is
the second seed.

\paragraph{Cold start (\cref{fig:motivation}(a)).}
The run converges in $4{,}583$ iterations. The limiting primal iterate has
numerical rank one, identified at iteration $1{,}113$ (the dotted line in
\cref{fig:motivation}(a)). The limiting certificate has numerical rank
$210$ out of $N-1=230$, with
$\lambda_{\min}(X_{\mathrm{lim}}+\sigma S_{\mathrm{lim}})\approx
4\times10^{-15}$. So the limiting pair is not strictly complementary. The
sharp linear phase begins essentially at rank identification.

\paragraph{Sampling singular endpoints (\cref{fig:motivation}(b)).}
For fixed $\sigma$, the fixed points of the shadow recursion are
$\Zopt=\{X_\star-\sigma S:S\in\Sopt\}$ (\cref{sec:rigidity}), so sampling
an endpoint means sampling an optimal certificate. We draw a symmetrized
standard Gaussian matrix $R$ and minimize $\ip{R}{S}$ over the optimal
certificate slice $\Sopt=(\Ssc+\cL)\cap\Sym_+(V)$ with an interior-point
solver. For a generic $R$ the minimizer is unique and is an extreme point
of $\Sopt$. Extreme points of the positive-dimensional set $\Sopt$ lie on
its relative boundary, where certificates lose rank. One hundred
independent draws on BQP-20q give certificates $S_i$ of numerical rank
$68$ to $72$ (median $69$), against the strictly complementary rank
$N-1=210$. The corresponding endpoints
$\bar{Z}_i=X_\star-\sigma S_i$ have numerical ranks $68$ to $72$ (median
$70$), far below the full rank $N=211$ attained at the strictly
complementary endpoint; \cref{fig:motivation}(b) plots their
distribution. Both readouts are relative to the matrix's own largest
eigenvalue; since $\normop{\bar{Z}_i}\approx N$ while $\normop{S_i}$ is
much smaller, the numerical rank of $\bar{Z}_i$ need not equal
$1+\rank(S_i)$. The endpoints
are also essentially different from one another: the pairwise distances
$\normF{\bar{Z}_i-\bar{Z}_j}=\sigma\normF{S_i-S_j}$ lie in $[15.7,16.7]$,
comparable to the certificate norms $\normF{S_i}\approx 13$ themselves.

\paragraph{Near-singular warm starts (\cref{fig:motivation}(c)).}
The $i$-th run starts at $Z^0_i=\bar{Z}_i+\eps G_i$ with $\eps=10^{-3}$,
where $G_i$ has independent standard Gaussian entries in the
$\operatorname{svec}$ coordinates. The perturbation has norm about
$\eps\sqrt{N(N+1)/2}\approx 0.15$, below $10^{-3}\,\normF{\bar{Z}_i}$. So
every start lies near a singular endpoint rather than at it. All $100$
runs converge. The iteration counts lie in $[4{,}008,\,5{,}124]$ (median
$4{,}922$). Each
limiting moment iterate has rank one, and each limiting certificate stays
within $0.11$ in Frobenius norm (about one percent of its size) of its
sampled certificate $S_i$. At the stated tolerance these limiting
certificates have numerical rank $210$, so the iteration stops at
nonsingular endpoints close to the singular samples.

\paragraph{A higher-degree objective.}
\Cref{fig:pbo4} repeats the full protocol of \cref{fig:motivation} on
PBO4-20. From the all-zeros start (panel (a)), the residual enters the
same sharp linear phase after a long transient. The run converges in $50{,}014$ iterations; on this instance
the cold start happens to select a strictly complementary endpoint.
Panels (b)--(c) report one hundred near-singular warm starts, generated
exactly as for BQP-20q. The sampled certificates have numerical ranks $68$
to $80$ (median $72$). The endpoints $\bar{Z}_i$ have numerical ranks $69$
to $71$, with pairwise distances in $[8.6,10.0]$ (the certificate norms
are about $57$). All $100$ runs converge, in $5{,}420$ to $5{,}497$
iterations, and the limiting certificates again have numerical rank $210$. The
phenomenon, and our theory, are not special to quadratic objectives.

\begin{figure}[t]
\centering
\begin{minipage}[t]{0.32\textwidth}\centering
 \includegraphics[width=\linewidth]{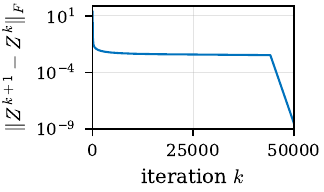}\\[1pt]
 {\small (a) cold start, SC limit}
\end{minipage}\hfill
\begin{minipage}[t]{0.32\textwidth}\centering
 \includegraphics[width=\linewidth]{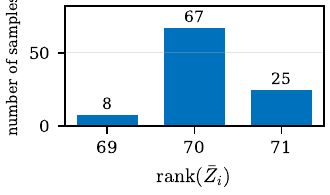}\\[1pt]
 {\small (b) ranks of the endpoints $\bar{Z}_i$}
\end{minipage}\hfill
\begin{minipage}[t]{0.32\textwidth}\centering
 \includegraphics[width=\linewidth]{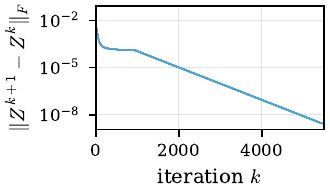}\\[1pt]
 {\small (c) $100$ near-singular warm starts}
\end{minipage}
\caption{The protocol of \cref{fig:motivation} on the certified dense
quartic instance PBO4-20 ($N=211$, order-two quotient relaxation). (a)
All-zeros start: $50{,}014$ iterations; here the limiting pair is strictly
complementary. (b) Numerical ranks of $100$
sampled singular endpoints $\bar{Z}_i$, far below $N=211$. (c) The $100$
runs started near them all converge locally linearly.}
\label{fig:pbo4}
\end{figure}

%% file: sections/ai_disclosure.tex
\section*{AI Methodologies}
\label{sec:ai}
The mathematical content of this paper---its definitions, theorems, and
proofs---was developed by the author. Large language models (OpenAI ChatGPT
and Anthropic Claude) were used to organize the manuscript, to search and
verify literature, to review drafts adversarially, and to copy-edit and
compress the exposition throughout; this assistance materially affected the
structure of the introduction and related-work discussion and the
presentation of the proofs. Every mathematical statement was verified by the
author, who takes full responsibility for this paper.